\newcommand\cyr{%
\renewcommand\rmdefault{wncyr}%
\renewcommand\sfdefault{wncyss}%
\renewcommand\encodingdefault{OT2}%
\normalfont
\selectfont}
\DeclareTextFontCommand{\textcyr}{\cyr} 
\DeclareFontFamily{OT1}{rsfs}{}
\DeclareFontShape{OT1}{rsfs}{n}{it}{<-> rsfs10}{}
\DeclareMathAlphabet{\mathscr}{OT1}{rsfs}{n}{it}
\numberwithin{equation}{section}
\newtheorem{theorem}{Theorem}[section]
\newtheorem{lemma}[theorem]{Lemma}
\newtheorem{proposition}[theorem]{Proposition}
\newtheorem{corollary}[theorem]{Corollary}
\newtheorem{question}{Question}
\newtheorem{Problem}{Problem}
\newtheorem{Maintheorem}{Main Theorem}
\theoremstyle{definition}
\newtheorem{definition}[theorem]{Definition}
\newtheorem{remark}[theorem]{Remark}
\newtheorem{Discussion}[theorem]{Discussion}
\theoremstyle{remark}
\newtheorem{example}[theorem]{Example}
\newtheorem*{acknowledgement}{Acknowledgement}
\newcommand{\im}{\operatorname{Im}}
\renewcommand{\ker}{\operatorname{Ker}}
\newcommand{\id}{\operatorname{id}}
\newcommand{\Hom}{\operatorname{Hom}}
\newcommand{\coker}{\operatorname{Coker}}
\newcommand{\Spa}{\operatorname{Spa}}
\newcommand{\Frac}{\operatorname{Frac}}
\newcommand{\Val}{\operatorname{Val}}
\newcommand{\fm}{\frak{m}}
\newcommand{\fp}{\frak{p}}
\newcommand{\fq}{\frak{q}}
\newcommand{\fa}{\frak{a}}
\newcommand{\fn}{\frak{n}}
\begin{document}
\title[A variant of perfectoid Abhyankar's lemma and almost Cohen-Macaulay algebras]
{A variant of perfectoid Abhyankar's lemma and almost Cohen-Macaulay algebras}
\dedicatory{Dedicated to our teachers, Professor Kazuhiro Fujiwara and Professor Paul Roberts.}

\author[K.Nakazato]{Kei Nakazato}
\address{Graduate School of Mathematics, Nagoya University, 
Nagoya 464-8602, Japan}
\email{keinakazato31@gmail.com}

\author[K.Shimomoto]{Kazuma Shimomoto}
\address{Department of Mathematics, Tokyo Institute of Technology, 2-12-1 Ookayama, Meguro, Tokyo 152-8551, Japan}
\email{shimomotokazuma@gmail.com}

\thanks{2020 {\em Mathematics Subject Classification\/}: 13A18, 13A35, 13B05, 13D22, 13J10, 14G22, 14G45}

\keywords{Almost purity theorem, big Cohen-Macaulay algebra, \'etale extension, perfectoid space}



\begin{abstract} 
In this paper, we prove that a complete Noetherian local domain of mixed characteristic $p>0$ with perfect residue field has an integral extension that is an integrally closed, almost Cohen-Macaulay domain such that the Frobenius map is surjective modulo $p$. This result is seen as a mixed characteristic analogue of the fact that the perfect closure of a complete local domain in positive characteristic is almost Cohen-Macaulay. To this aim, we carry out a detailed study of decompletion of perfectoid rings and establish the Witt-perfect (decompleted) version of Andr\'e's perfectoid Abhyankar's lemma and Riemann's extension theorem.
\end{abstract}

\maketitle

\setcounter{tocdepth}{3}
\tableofcontents

\section{Introduction}

In the present article, rings are assumed to be commutative with a unity. Recently, Yves Andr\'e established \textit{Perfectoid Abhyankar's Lemma} in \cite{An1} as a conceptual generalization of \textit{Almost Purity Theorem}; see \cite[Theorem 7.9]{Sch12}. This result is stated for perfectoid algebras over a perfectoid field, which are defined to be certain $p$-adically complete and separated rings. Using his results, Andr\'e proved the existence of big Cohen-Macaulay algebras in mixed characteristic in \cite{An2}. More precisely, he constructed a certain \textit{almost} Cohen-Macaulay algebra using perfectoids. We are inspired by this result and led to the following commutative algebra question, which is raised in \cite{R08} and \cite{R10} implicitly.

\begin{question}[Roberts]
\label{PRoberts}
Let $(R,\fm)$ be a complete Noetherian local domain of arbitrary characteristic with its absolute integral closure $R^+$. Then does there exist an $R$-algebra $B$ such that $B$ is an almost Cohen-Macaulay $R$-algebra and $R \subset B \subset R^+$?
\end{question}

Essentially, Question \ref{PRoberts} asks for a possibility to find a \textit{relatively small} almost Cohen-Macaulay algebra. The structure of this article is twofold. We begin with giving an answer to Question \ref{PRoberts} and then discuss necessary perfectoid techniques.

\subsection{Main results on commutative algebra}

Andr\'e proved that any complete Noetherian local domain maps to a big Cohen-Macaulay algebra and using his result, it was proved that such a big Cohen-Macaulay algebra could be refined to be an integral perfectoid big Cohen-Macaulay algebra in \cite{Sh18}. See Definition \ref{BigMac} and Definition \ref{DefAlmost} for big (almost) Cohen-Macaulay algebras. Question \ref{PRoberts} was stated in a characteristic-free manner. Let us point out that if $\dim R \le 2$, then $R^+$ is a big Cohen-Macaulay algebra in an arbitrary characteristic. This is easily seen by using Serre's normality criterion. Recall that if $R$ has prime characteristic $p>0$, then $R^+$ is a big Cohen-Macaulay $R$-algebra. This result was proved by Hochster and Huneke and their proof is quite involved; see \cite{H07}, \cite{HH92}, \cite{HH95}, \cite{HL07}, \cite{Q16} and \cite{SS12} for related results as well as \cite{HM17}, \cite{HM18}, \cite{MS18}, \cite{Mu21} and \cite{Mu22} for applications to tight closure, multiplier/test ideals and singularities on algebraic varieties. There is another important work on the purity of Brauer groups using perfectoids; see \cite{CK19}. It seems to be an open question whether $R^+$ is almost Cohen-Macaulay when $R$ has equal-characteristic zero. If $R$ has mixed characteristic of dimension $3$, Heitmann proved that $R^+$ is a $(p)^{\frac{1}{p^\infty}}$-almost Cohen-Macaulay $R$-algebra in \cite{H02}. Our main concern, inspired also by the recent result of Heitmann and Ma \cite{HM18}, is to extend Heitmann's result to the higher dimensional case, thus giving a positive answer to Roberts' question in mixed characteristic; see Theorem \ref{AlmostCMalg}.

\begin{Maintheorem}
\label{AlmostBigCM}
Let $(R,\fm)$ be a complete Noetherian local domain of mixed characteristic $p>0$ with perfect residue field $k$. Let $p,x_2,\ldots,x_d$ be a system of parameters and let $R^+$ be the absolute integral closure of $R$. Then there exists an $R$-algebra $T$ together with a nonzero element $g \in R$ such that the following hold:
\begin{enumerate}
\item
$T$ admits compatible systems of $p$-power roots $p^{\frac{1}{p^n}}, g^{\frac{1}{p^n}} \in T$ for all $n>0$.

\item
The Frobenius endomorphism $Frob:T/(p) \to T/(p)$ is surjective.

\item
$T$ is a $(pg)^{\frac{1}{p^\infty}}$-almost Cohen-Macaulay normal domain with respect to $p,x_2,\ldots,x_d$ and $R \subset T \subset R^+$.

\item
The $p$-adic completion $\widehat{T}$ is integral perfectoid.

\item
$R[\frac{1}{pg}] \to T[\frac{1}{pg}]$ is an ind-\'etale extension. In other words, $T[\frac{1}{pg}]$ is a filtered colimit of finite \'etale $R[\frac{1}{pg}]$-algebras contained in $T[\frac{1}{pg}]$.
\end{enumerate}
\end{Maintheorem}

In other words, one can find an almost Cohen-Macaulay, normal domain whose $p$-adic completion is integral perfectoid between $R$ and its absolute integral closure. Using Hochster's partial algebra modification and tilting, one can construct an integral perfectoid big Cohen-Macaulay $R$-algebra over $T$; see \cite{Sh18} for details. In a sense, Main Theorem \ref{AlmostBigCM} is regarded as a weak analogue of the mixed characteristic version of a result by Hochster and Huneke. The proof of our result does not seem to come by merely considering decompleted versions of the construction by Heitmann and Ma in \cite{HM18}, due to the difficulty of studying $(pg)^{\frac{1}{p^\infty}}$-almost (or $(g)^{\frac{1}{p^\infty}}$-almost) mathematics under $p$-adic completion. For instance, we do not know if it is possible to decomplete Andr\'e's Riemann's extension theorem (Theorem \ref{Hebbarkeits1}), because it is hard to analyze how the functor $g^{-\frac{1}{p^\infty}}(~)$ and the $p$-adic completion are related to each other.\footnote{The theory of prismatic cohomology \cite{BS22} deals with $(pg)^{\frac{1}{p^\infty}}$-almost mathematics. However, as far as the authors are aware of, we are still lacking in a powerful theory of the decompletion in the framework of $(pg)^{\frac{1}{p^\infty}}$-almost mathematics.} This is the main reason one is required to redo the decompletion of Andr\'e's results in \cite{An1} and \cite{An2}.

Finally, Bhatt recently proved that the absolute integral closure of a complete local domain $(R,\fm)$ of mixed characteristic has the property that $R^+/p^nR^+$ is a balanced big Cohen-Macaulay $R/p^nR$-algebra for any $n>0$; see \cite{Bh20} in which the perfectoidization functor introduced in \cite{BS22} is used as an essential tool. It will be interesting to know how his methods and results are compared to ours; at present, the authors have no clue. However, it is worth pointing out the following fact.
\begin{enumerate}
\item[$\bullet$]
The almost Cohen-Macaulay algebra $T$ constructed in Main Theorem \ref{AlmostBigCM} is integral over the Noetherian local domain $(R,\fm)$ and much smaller than the absolute integral closure $R^+$.
\end{enumerate}

In a sense, $T$ is close to being a Noetherian ring. We mention some potential applications of Main Theorem \ref{AlmostBigCM}.

\begin{enumerate}
\item
Connections with the singularities studied in \cite{MS19} by exploiting the ind-\'etaleness of $R[\frac{1}{pg}] \to T[\frac{1}{pg}]$.

\item
A refined study of the main results on the closure operations in mixed characteristic as developed in \cite{ZJ20}.

\item
An explicit construction of a big Cohen-Macaulay module from the $R$-algebra $T$; see Corollary \ref{BigCMModule}.
\end{enumerate}

\subsection{Main results on the decompletion of perfectoids and Riemann's extension theorem}

To prove Main Theorem \ref{AlmostBigCM}, we need to relax the $p$-adic completeness from Perfectoid Abhyankar's Lemma and incorporate the so-called \textit{Witt-perfect} condition, which is introduced by Davis and Kedlaya in \cite{DK14}. Roughly speaking, a Witt-perfect (or $p$-Witt-perfect) algebra is a $p$-torsion free ring $A$ whose $p$-adic completion becomes an integral perfectoid ring. Indeed, Davis and Kedlaya succeeded in proving the almost purity theorem for Witt-perfect rings. The present article is a sequel to authors' previous work \cite{NS19}, in which the authors were able to give a conceptual proof of the almost purity theorem by Davis-Kedlaya by analyzing the integral structure of Tate rings under completion. The advantage of working with Witt-perfect rings is that it allows one to take an infinite integral extension over a certain $p$-adically complete ring to construct an almost Cohen-Macaulay algebra. The resulting algebra is not $p$-adically complete, but its $p$-adic completion is integral perfectoid. Let us state the main result; see Theorem \ref{PerAbhyankar} and Proposition \ref{preprop}.

\begin{Maintheorem}
\label{main}
Let $A$ be a $p$-torsion free algebra over a $p$-adically separated $p$-torsion free Witt-perfect valuation domain $V$ of rank $1$ admitting a compatible system of $p$-power roots $p^{\frac{1}{p^n}} \in V$, together with a regular element $g \in A$ admitting a compatible system of $p$-power roots $g^{\frac{1}{p^n}} \in A$. Suppose that the following conditions hold.
\begin{enumerate}
\item
$A$ is a $p$-adically Zariskian and normal ring.

\item
$A$ is a $(pg)^{\frac{1}{p^\infty}}$-almost Witt-perfect ring.

\item
$A$ is torsion free and integral over a Noetherian normal domain $R$ such that $g \in R$ and the height of the ideal $(p,g) \subset R$ is $2$.
\\

Let us put
$$
g^{-\frac{1}{p^{\infty}}}A:=\Big\{a \in A[\frac{1}{g}]~\Big|~g^{\frac{1}{p^n}} a \in A,~\forall n>0\Big\},
$$
which is an $A$-subalgebra of $A[\frac{1}{g}]$. Let $A[\frac{1}{pg}] \hookrightarrow B'$ be a finite \'etale extension, and 
denote by $B$ the integral closure of $g^{-\frac{1}{p^{\infty}}}A$
in $B'$. Then the following statements hold:

\begin{enumerate}
\item
The Frobenius endomorphism $Frob:B/(p) \to B/(p)$ is $(pg)^{\frac{1}{p^\infty}}$-almost surjective and it induces an injection $B/(p^{\frac{1}{p}}) \hookrightarrow B/(p)$.

\item
The induced map $A/(p^m) \to B/(p^m)$ is $(pg)^{\frac{1}{p^\infty}}$-almost finite \'etale for all $m>0$.
\end{enumerate}
\end{enumerate}
\end{Maintheorem}

In the original version of Perfectoid Abhyankar's Lemma as proved in \cite{An1} and \cite{An2}, it is assumed that $A$ is an \textit{integral perfectoid ring}, which is necessarily $p$-adically complete and separated. In Main Theorem \ref{main}, this assumption is weakened to $p$-adic Zariskianness.  A detailed study of \textit{almost Witt-perfect rings} appears in the paper \cite{NS19}; see also Definition \ref{AlmostWittRing} below. The functor $A \mapsto g^{-\frac{1}{p^\infty}}A$ is called a \textit{functor of almost elements}, which is fundamental in almost ring theory. The idea of the proof of Main Theorem \ref{main} is to transport Andr\'e's original proof to our situation. Here is a summary of ingredients toward the proof:

\begin{enumerate}
\item[$\bullet$]
The almost purity theorem for Witt-perfect rings.

\item[$\bullet$]
Descent to Galois extensions of commutative rings.

\item[$\bullet$]
Riemann's extension theorem (Hebbarkeitssatz).

\item[$\bullet$]
Description of the integral structures of affinoid Tate rings via continuous valuations.

\item[$\bullet$]
Comparison of integral closure and complete integral closure.
\end{enumerate}

The almost purity theorem for Witt-perfect rings is attributed to Davis and Kedlaya; see \cite{DK14} and \cite{DK15}. A systematic approach to this important result was carried out in authors' paper \cite{NS19}. 
The almost purity theorem yields the assertion of Main Theorem \ref{main} in the case when $g=1$. To extend it to the general situations, we need a ring theoretic analogue of \textit{Riemann's Extension Theorem}. Its perfectoid version has been proved by Scholze in \cite{Sch15}, and Andr\'e used it in the proof of Perfectoid Abhyankar's Lemma in \cite{An1}. We establish two types of decompleted variant of it, which are at the core of the technical part of this paper. The first one, which we call \emph{Zariskian Riemann's extension theorem}, is the following result; see Theorem \ref{RiemannExtadic}. We should remark that it is independent of the theory of perfectoid rings.

\begin{Maintheorem}
\label{MainZRET}
Let $A$ be a ring with a regular element $t$ that is $t$-adically Zariskian and integral over a Noetherian ring. Let $g\in A$ be a regular element. 
Let $A^{j}$ be the Tate ring associated to $\big(A[\frac{t^j}{g}], (t)\big)$ for every integer $j>0$ (see Definition \ref{TateRingDef} for Tate rings). Then we have an isomorphism of rings 
$$
A^+_{A[\frac{1}{tg}]} \xrightarrow{\cong} \varprojlim_{j>0}A^{j\circ}, 
$$ 
where the transition map $A^{j+1\circ}\to A^{j\circ}$ is the natural one, and $A^+_{A[\frac{1}{tg}]}$ is the integral closure of $A$ in $A[\frac{1}{tg}]$.
\end{Maintheorem}

For proving Main Theorem \ref{MainZRET}, a preliminary result Corollary \ref{Cont2} is crucial. 
Recall that an integrally closed domain $A$ is the intersection of all valuation domains that lie between $A$ and the field of fractions; see \cite[Proposition 6.8.14]{SH06} for the proof of this assertion from classical valuation theory. 
Corollary \ref{Cont2} is viewed as a variant of this result for affinoid Tate rings. 
The assumption that $A$ is \emph{$t$-adically Zariskian} and \emph{integral over a Noetherian ring} is necessary in order to find valuation rings of rank $1$ for the proof to work (it is interesting to know to what extent one can relax these assumptions). 
Main Theorem \ref{MainZRET} is also relevant to a standard technique used in non-archimedean geometry. 
Indeed, our proof for Proposition \ref{Cont1} is inspired by Huber's description of integral structures of affinoid rings via continuous valuations (\cite[Lemma 3.3]{Hu93}). 
Moreover, one can formulate Corollary \ref{Cont2} as a variant of Fujiwara-Kato's theorem (cf.\ {\cite[\textbf{II}, Theorem 8.1.11 and Theorem 8.2.19]{FK18}}) in rigid geometry; see Corollary \ref{Cont3} and Corollary \ref{1912132251}. 

The second variant, which we call \emph{Witt-perfect Riemann's extension theorem}, is stated as the assertions (c) and (d) of the following result; see Proposition \ref{pg-approx} and Theorem \ref{RiemannExt} as well as the explanation of appearing notation.

\begin{Maintheorem}
\label{main01}
Let $A$ be a $p$-torsion free algebra over a $p$-adically separated $p$-torsion free Witt-perfect valuation domain $V$ of rank $1$ admitting a compatible system of $p$-power roots  $p^{\frac{1}{p^n}} \in V$, together with a regular element $g \in A$ admitting a compatible system of $p$-power roots $g^{\frac{1}{p^n}} \in A$. Denote by $\widehat{(~)}$ the $p$-adic completion and suppose that the following conditions hold. 
\begin{enumerate}

\item
$A$ is a $(pg)^{\frac{1}{p^\infty}}$-almost Witt-perfect ring and completely integrally closed in $A[\frac{1}{p}]$.

\item
$(p,g)$ is a $(p)^{\frac{1}{p^\infty}}$-almost regular sequence on $A$ (which merely says that $g$ is a $(p)^{\frac{1}{p^\infty}}$-almost regular element on
$A/(p)$). 
\\

Then the following statements hold.
\begin{enumerate}
\item
The inclusion map: 
\begin{equation}
\label{Scholzeapprox2}
A\big[\big(\frac{p^j}{g}\big)^{\frac{1}{p^\infty}}\big] \hookrightarrow A^{j \circ}
\end{equation}
 is a $(p)^{\frac{1}{p^\infty}}$-almost isomorphism. 
\item
There is an $A[\frac{p^{j}}{g}]$-algebra isomorphism:
$$
\widehat{A^{j\circ}} \xrightarrow{\cong} \mathcal{A}^{j\circ}.
$$
Moreover, $A^{j\circ}$ is Witt-perfect.

\item
We have the following identification of rings: 
\begin{equation}
\label{fiberproduct1}
\varprojlim_{j>0}A^{j\circ}=A[\frac{1}{pg}]\times_{\mathcal{A}[\frac{1}{g}]}g^{-\frac{1}{p^\infty}}\mathcal{A}^{\circ}. 
\end{equation}
\item
There is an injective $A$-algebra map:
$$
\widehat{\varprojlim_{j>0} A^{j\circ}} \hookrightarrow \varprojlim_{j>0} \widehat{A^{j\circ}},
$$
whose cokernel is $(g)^{\frac{1}{p^\infty}}$-almost zero.
\end{enumerate}
\end{enumerate}
\end{Maintheorem}

The almost isomorphism in the assertion (d) is at the heart of the theorem; notice that in general, inverse limits and taking completion do not commute. Our proof for the assertions (c) and (d) relies on the already-known Riemann's extension theorem for perfectoid algebras. 
Thus we need to describe the relationship between rational localizations of Tate rings associated to a Witt-perfect ring and the corresponding integral perfectoid ring. The assertions (a) and (b) are consequences of a fine study on it. The $(p)^{\frac{1}{p^\infty}}$-almost regularity assumption on the sequence $(p,g)$ ensures that $g$ is $(p)^{\frac{1}{p^\infty}}$-almost regular on the $p$-adic completion $\widehat{A}$; this is due to Lemma \ref{adiccompletion}. Another reason for assuming $(p)^{\frac{1}{p^\infty}}$-almost regularity rather than $(pg)^{\frac{1}{p^\infty}}$-almost regularity is due to Lemma \ref{Aj-completion}. See also Proposition \ref{RegularRatLoc} as an intermediary step. We remark that there are no common assumptions of both Main Theorem \ref{MainZRET} and Main Theorem \ref{main01} on the ring $A$. Let us summarize the content of each section of the present paper.

In \S \ref{SecNotConv}, we give generalities on almost rings, almost modules and topological rings. We also recall the definitions of perfectoid algebras and their almost analogues whose detailed studies are given in the authors' paper \cite{NS19}.

In \S \ref{SecPreLem}, some basic results are proved on complete integral closure and its behavior under completion. We stress that the use of ``Beauville-Laszlo's lemma" is indispensable for getting meaningful results. This section is intended to give some justification/clarification on the difference between integral and complete integral closures.

In \S \ref{FEtTate}, we study some behavior of finite \'etale extensions of Tate rings under rational localization. This section is regarded as a complement to \cite{NS19}, and so also includes a brief review of several results in that paper. 

In \S \ref{SecPAbhy}, we establish the decompleted variants of Riemann's extension theorem as well as Perfectoid Abhyankar's lemma over almost perfectoid rings. As this section contains quite technical discussions, the reader can skip the details on first reading.

In \S \ref{SecAppWPA}, we give the main applications of the results obtained in the previous sections. The main theorem asserts that one can construct an almost Cohen-Macaulay normal domain between the original complete local domain and its absolute integral closure.

In \S \ref{AppendixA}, we prove auxiliary facts on integrality as well as almost integrality via rigid analytic methods, following the book \cite{FK18}.

In \S \ref{AppendixB}, we give a complete account of the proof of Andr\'e's Riemann's extension theorem. To this aim, we also give a proof of the almost vanishing theorem on derived limits, which is discussed in \cite{An1}. We hope that this appendix will be helpful for the reader to understand key results in Andr\'e's original approach.

In \S \ref{AlmostG-GaloisExt}, we give a brief account on (almost) Galois extensions of commutative rings. These are already treated in Andr\'e's paper \cite{An1} and we omit the proofs.

$\bf{Caution}$: In this paper, we take both integral closure and complete integral closure for a given ring extension. This distinction is not essential in our setting in view of Proposition \ref{propIC=CIC}. However, we opt to formulate the results (mostly) in complete integral closure, because we believe that correct statements of the possible generalizations of our main results without integrality over a Noetherian ring should be given in terms of complete integral closure. The reader is warned that complete integral closure is coined as \textit{total integral closure} in the lecture notes \cite{Bh17}. We collect notation used in the proof of Theorem \ref{PerAbhyankar} in Definition \ref{DefAj} (see also Remark \ref{DefAi2}).

The almost version of perfectoid or Witt-perfect rings often appears in the following discussions. To the best of authors' knowledge, the first appearance of almost perfectoid rings came from Andr\'e's work on Perfectoid Abhyankar's Lemma. The reason is that $(pg)^{\frac{1}{p^\infty}}$-almost mathematics is essential in Andr\'e's work, in which case we can only say that the cokernel of the Frobenius map is $(pg)^{\frac{1}{p^\infty}}$-surjective. The reader will notice that the base ring in Theorem \ref{PerAbhyankar} is required to be almost Witt-perfect in order for the proof to work. In a future's occasion, we hope to clarify a real distinction between perfectoid and almost perfectoid rings.

\section{Notation and conventions}


We say that a commutative ring $A$ is \textit{normal}, if the localization $A_{\fp}$ is an integrally closed domain in its field of fractions for every prime ideal $\fp \subset A$. For ring maps $A \to C$ and $B \to C$, we write $A \times_C B$ for the fiber product. The \textit{completion} of a module is always taken to be complete and separated. 

\label{SecNotConv}
\subsection{Almost ring theory}
We use language of almost ring theory. The most comprehensive references are \cite{GR03} and \cite{GR18}, where the latter discusses applications of almost ring theory to algebraic geometry and commutative ring theory. Notably, it includes an extension of the Direct Summand Conjecture to the setting of log-regular rings. Throughout this article, for an integral domain $A$, let $\Frac(A)$ denote the field of fractions of $A$. A \textit{basic setup} is a pair $(A,I)$, where $A$ is a ring and $I$ is its ideal such that $I^2=I$.\footnote{As in \cite{GR03}, we assume that $I \otimes_A I$ is flat. For the applications, we only consider the case where $I$ is the filtered colimit of principal ideals; see \cite[Proposition 2.1.7]{GR03}.} An $A$-module $M$ is \textit{I-almost zero} (or simply \textit{almost zero}) if $I M=0$. Let $f:M \to N$ be an $A$-module map. Then we say that $f$ is \textit{I-almost injective} (resp. \textit{I-almost surjective} if the kernel (resp. cokernel) of $f$ is annihilated by $I$. Moreover, we say that $f$ is an \textit{I-almost isomorphism} (or simply an \textit{almost isomorphism}) if both kernel and cokernel of $f$ are annihilated by $I$. Let us define an important class of basic setup $(K,I)$ as follows: Let $K$ be a perfectoid field of characteristic 0 with a non-archimedean norm $|\cdot|:K \to \mathbf{R}_{\ge 0}$. Fix an element $\varpi \in K$ such that $|p| \le |\varpi|<1$ and $I:=\bigcup_{n>0} \varpi^{\frac{1}{p^n}}K^\circ$ (such an element $\varpi$ exists and plays a fundamental role in perfectoid geometry). Set $K^\circ:=\{x \in K~|~|x| \le 1\}$ and $K^{\circ \circ}:=\{x \in K~|~|x| < 1\}$. Then $K^\circ$ is a complete valuation domain of rank $1$ with field of fractions $K$ and the pair $(K^\circ,I)$ is a basic setup. 

Let $(A,I)$ be a basic setup. Then the category of almost $A$-modules or $A^a$-modules $A^a-\bf{Mod}$, is the quotient category of $A$-modules $A-\bf{Mod}$ by the Serre subcategory of $I$-almost zero modules. So this defines the localization functor $(~)^a:A-\mathbf{Mod} \to A^a-\mathbf{Mod}$. This functor admits a right adjoint and a left adjoint functors respectively:
$$
(~)_*:A^a-\mathbf{Mod} \to A-\mathbf{Mod}~\mbox{and}~(~)_!:A^a-\mathbf{Mod} \to A-\mathbf{Mod}.
$$
These are defined by $M_*:=\Hom_A(I,M_0)$ with $M_0^a=M$ and $M_!:=I \otimes_A M_*$. See \cite[Proposition 2.2.14 and Proposition 2.2.23]{GR03} for these functors. So we have the following fact: The functor $(~)_*$ commutes with limits and $(~)_!$ commutes with colimits. Finally, the functor $(~)^a$ commutes with both colimits and limits. In particular, an explicit description of $M_*$ will be helpful. Henceforth, we abusively write $M_*$ for $(M^a)_*$ for an $A$-module $M$. The notation $
M \xrightarrow{\approx} N$ will be used throughout to indicate that there is an $A$-homomorphism $M \to N$ that is an $I$-almost isomorphism. An isomorphism in the category $A^a-\mathbf{Mod}$ will be denoted by $M \approx N$.\footnote{This symbol is used when there is not necessarily an honest homomorphism between $M$ and $N$.} For technical details, we refer the reader to \cite{GR03}.

Let us recall an explicit description of $M_{*}$. 

\begin{lemma}
\label{almostelement}
Let $M$ be a module over a ring $A$ and let $\varpi \in A$ be an element such that $A$ admits a compatible system of $p$-power roots $\varpi^{\frac{1}{p^n}} \in A$ for $n \ge 0$. Set $I=\bigcup_{n>0} \varpi_n A$ with $\varpi_n:=\varpi^{\frac{1}{p^n}}$ and suppose that $\varpi$ is regular on both $A$ and $M$.
Then the following statements hold:
\begin{enumerate}
\item
$(A,I)$ is a basic setup.

\item
There is an equality:
$$
M_*=\Big\{b \in M[\frac{1}{\varpi}]~\Big|~\varpi_n b \in M~ \mbox{for all}~n>0\Big\}.
$$
Moreover, the natural map $M \to M_*$ is an $I$-almost isomorphism. If $M$ is an $A$-algebra, then $M_*$ has an $A$-algebra structure and the natural map $M \to M_*$ is an $A$-algebra map.
\end{enumerate}
\end{lemma}

\begin{proof}
The presentation for $M_*$ is found in \cite[Lemma 5.3]{Sch12} over a perfectoid field and the proof there works under our setting without any modifications. If $M$ is an $A$-algebra, then the above presentation will endow $M_*$ with an $A$-algebra structure. In other words, $M_*$ is naturally an $A$-subring of $M[\frac{1}{\varpi}]$. 
\end{proof}

In the situation of the lemma, we often write $M_*$ as $\bigcap_{n>0} \varpi^{-\frac{1}{p^n}}M$ or $\varpi^{-\frac{1}{p^\infty}}M$ to indicate that what basic setup of almost ring theory we are talking. Next we observe that $I$-almost isomorphy is preserved under pullbacks in the category of (actual) $A$-algebras.

\begin{lemma}\label{PullbackRings}
Let $(A, I)$ be a basic setup. 
Let $f: R\to S$ be an $A$-algebra homomorphism that admits a commutative diagram of $A$-algebras: 
\[\xymatrix{
R\ar[dr]_{\varphi_{R}}\ar[rr]^{f}&& S\ar[dl]^{\varphi_S}\\
&T&.
}\]
Let $\psi: T'\to T$ be an $A$-algebra homomorphism. Then the following assertions hold. 
\begin{enumerate}
\item
If $f$ is $I$-almost injective (i.e.\ $\ker (f)$ is annihilated by $I$), then so is the base extension $\id_{T'}\times_{T}f: T'\times_{T}R\to T'\times_{T}S$. 
\item
If $f$ is $I$-almost surjective (i.e.\ $\coker (f)$ is annihilated by $I$), then so is the base extension $\id_{T'}\times_{T}f: T'\times_{T}R\to T'\times_{T}S$. 
\end{enumerate}
\end{lemma}
\begin{proof}
We use the explicit description of fiber products: 
$T'\times_{T}R=\{(t',r) \in T'\times R~|~\psi(t')=\varphi_{R}(r)\}$ and $T'\times_{T}S=\{(t',s) \in T'\times S~|~\psi(t')=\varphi_{S}(s)\}$. 

(1): Pick an element $(t',r)\in T'\times_{T}R$ with $\id_{T'}\times_{T}f((t',r))=0$. Then $t'=\id_{T'}(t')=0$. 
Moreover, since $f(r)=0$, $xs=0$ for every $x\in I$ by assumption. 
Hence $x(t',r)=0$ for every $x\in I$, which yields the assertion. 

(2): Pick an element $(t',s)\in T'\times_{T}S$. Then by assumption, for every $x\in I$, there exists some $r_{x}\in R$ such that $f(r_{x})=xs$. 
Thus we obtain an element $(xt', r_{x})\in T'\times_{T}R$ whose image in $T'\times_{T}S$ is $x(t', s)$, as desired. 
\end{proof}

The following lemma claims that almost isomorphy is preserved under adic completion.

\begin{lemma}\label{almostcomp}
Let $(A, I)$ be a basic setup. Let $f: M\to N$ be an $I$-almost isomorphism between $A$-modules. Let $J\subset A$ be an ideal, and let $\widehat{M}$ and $\widehat{N}$ be the $J$-adic completions.  
Then the $A$-module map $\widehat{f}: \widehat{M}\to \widehat{N}$ induced by $f$ is also an $I$-almost isomorphism. 
\end{lemma}

\begin{proof}
The assertion is equivalent to the assertion that the map $(\widehat{f})^{a}: (\widehat{M})^{a}\to (\widehat{N})^{a}$ in $A^a-\mathbf{Mod}$ is an isomorphism. 
Since the functor $(~)^a$ commutes with limits, $(\widehat{f})^{a}$ is canonically isomorphic to $\varprojlim_{n>0}f^{a}_{n}: \varprojlim_{n>0}(M/J^{n}M)^{a}\to \varprojlim_{n>0}(N/J^{n}N)^{a}$ where 
$f_{n}: M/J^{n}M\to N/J^{n}N$ is the $A$-module map induced by $f$ for every $n> 0$. 
Thus it suffices to show that $f_{n}$ is an $I$-almost isomorphism for every $n> 0$. It can be easily seen that $f_{n}$ is $I$-almost surjective because $f$ is so. 
Let us verify that $f_{n}$ is $I$-almost injective. 
First, we have $\ker (f_{n})=f^{-1}(J^{n}N)/J^{n}M$. Moreover, for an arbitrary $\epsilon\in I$, 
$$
\epsilon f^{-1}(J^{n}N)\subset f^{-1}(J^{n}\im (f))=J^{n}M+\ker (f)
$$ 
because $f$ is $I$-almost surjective. 
Thus, since $f$ is $I$-almost injective, for an arbitrary $\epsilon'\in I$ we have 
$$
\epsilon'\epsilon f^{-1}(J^{n}N)\subset J^{n}M+\epsilon' \ker (f)=J^{n}M. 
$$
Therefore, $\epsilon'\epsilon\ker (f_{n})=0$. Since $I^{2}=I$, it implies that $\epsilon'' \ker (f_{n})=0$ for every $\epsilon''\in I$. 
Hence the assertion follows. 
\end{proof}

\subsection{Integrality and almost integrality}\label{InAI}
Here we list several closure operations of rings that will be used frequently.

\begin{definition}
Let $R \subset S$ be a ring extension. 

\begin{enumerate}
\item
An element $s \in S$ is \textit{integral} over $R$, if $\sum_{n=0}^\infty R \cdot s^n$ is a finitely generated $R$-submodule of $S$. The set of all elements denoted as $T$ of $S$ that are integral over $R$ forms a subring of $S$. If $R=T$, then $R$ is called \textit{integrally closed} in $S$. We denote by $R^+_S$ the integral closure of $R$ in $S$.

\item
An element $s \in S$ is \textit{almost integral} over $R$, if $\sum_{n=0}^\infty R \cdot s^n$ is contained in a finitely generated $R$-submodule of $S$. The set of all elements denoted as $T$ of $S$ that are almost integral over $R$ forms a subring of $S$, which is called the \textit{complete integral closure} of $R$ in $S$. We denote this ring by $R^*_S$. If $R=T$, then $R$ is called \textit{completely integrally closed} in $S$. 
\end{enumerate}
\end{definition}

This definition can be extended to any ring map $R \to S$ in a natural way, as follows. Let $R$ be a ring, let $S$ be an $R$-algebra and let $s\in S$ be an element. Then  we say that $s$ is \textit{integral} (resp.\ \textit{almost integral}) over $R$, if $s$ is integral (resp.\ almost integral) over the image of $R$ in $S$. 
We should remark that ``almost integrality'' does not mean ``integrality in almost ring theory'' in a strict sense, but there is an interesting connection between these two notions; see \cite[Lemma 5.3]{NS19}. 

From the definition, it is immediate to see that if $R$ is a Noetherian domain and $S$ is the field of fractions of $R$, then $R$ is integrally closed if and only if it is completely integrally closed. There are subtle points that we must be careful about on complete integral closure. The complete integral closure $T$ of $R$ is not necessarily completely integrally closed in $S$ and such an example was constructed by W. Heinzer \cite{He69}. Let $R \subset S \subset T$ be ring extensions. Let $b \in S$ be an element. Assume that $b$ is almost integral over $R$ when $b$ is regarded as an element of $T$. Then it does not necessarily mean that $b$ is almost integral over $R$ when $b$ is regarded as an element of $S$; see \cite{GH66} for such an example. 

We also recall the notion of \textit{absolute integral closure} due to Artin \cite{Ar71}.

\begin{definition}[Absolute integral closure]\label{AICdef}
Let $A$ be an integral domain. Then the \textit{absolute integral closure} of $A$ denoted by $A^+$, is defined to be the integral closure of $A$ in a fixed algebraic closure of $\Frac(A)$.
\end{definition}

\subsection{Semivaluation and adic spectra}
We need some basic language from Huber's \textit{continuous valuations} and \textit{adic spectra}; see \cite{Hu93} and \cite{Hu94}.
Continuous valuations are a special class of \textit{semivaluations} (see Definition \ref{semivaluation1} below) that satisfy a certain topological condition. 

\begin{definition}[Semivaluation]
\label{semivaluation1}
Let $A$ be a ring and let $|\cdot|:A \to \Gamma \cup \{0\}$ be a map for a totally ordered abelian group $\Gamma$ with group unit $1$ and we let $0< \gamma$ for arbitrary $\gamma \in \Gamma$. Then $|\cdot|$ is called a \textit{semivaluation}, if $|0|=0$, $|1|=1$, $|xy|=|x||y|$ and $|x+y| \le \max \{|x|,|y|\}$ for $x,y \in A$.
\end{definition}

\begin{definition}[Continuous valuation]
Let $A$ be a topological ring. Then a semivaluation $|\cdot|:A \to \Gamma \cup \{0\}$ is \textit{continuous} if $|\cdot|^{-1}(\Gamma_{<\gamma} \cup \{0\})$ is an open subset of $A$ for any $\gamma \in \Gamma$, where $\Gamma_{<\gamma}:=\{\alpha \in \Gamma~|~\alpha<\gamma\}$.
\end{definition}

The name ``semivaluation'' refers to the fact that $A$ need not be an integral domain. 
However, following the usage employed in \cite{Hu93}, we will stick to the word ``continuous valuation" rather than ``continuous semivaluation" for brevity. 

In this paper, we mainly consider continuous valuations on \textit{Tate rings}.

\begin{definition}
\label{TateRingDef}
Let $A$ be a topological ring.
\begin{enumerate}
\item
$A$ is called \textit{Tate}, if there is an open subring $A_0 \subset A$ together with an element $t \in A_0$ such that the topology on $A_0$ induced from $A$ is $t$-adic and $t$ becomes a unit in $A$. $A_0$ is called a \textit{ring of definition} and $t$ is called a \textit{pseudouniformizer}.

\item
Let $A_0$ be a ring and $t \in A_0$ is a regular element. Then the \textit{Tate ring associated to} $(A_0,(t))$\footnote{$(t)$ denotes the principal ideal of $A_{0}$ generated by $t$. Notice that the ring $A_{0}[\frac{1}{t}]$ and $t$-adic topology on $A_{0}$ are independent of the choice of a generator of the ideal $tA_{0}$ because $t\in A_{0}$ is regular. } is the ring 
$A:=A_{0}[\frac{1}{t}]$ equipped with the linear topology such that $\{t^{n}A_0\}_{n\geq 1}$ forms a fundamental system of open neighborhoods of $0\in A$ (it is a unique Tate ring containing $A_{0}$ such that $A_{0}$ is a ring of definition and $t$ is a pseudouniformizer; see \cite[Lemma 2.11]{NS19}). 
\end{enumerate}
\end{definition}

For a Tate ring $A$, we denote by $A^\circ \subset A$ the subset consisting of powerbounded elements of $A$ and by $A^{\circ\circ} \subset A$ the subset consisting of topologically nilpotent elements of $A$. It is easy to verify that $A^{\circ\circ} \subset A^\circ \subset A$, $A^\circ$ is a subring of $A$ and $A^{\circ\circ}$ is an ideal of $A^\circ$. 
The pair $(A,A^+)$ is called an \textit{affinoid Tate ring}, if $A^+ \subset A$ 
is an open and integrally closed subring contained in $A^{\circ}$.\footnote{This $A^{+}$ should not be confused with the same symbol representing the absolute integral closure in Definition \ref{AICdef}.} 
Let $\Spa(A,A^+)$ denote the set of continuous valuations $|\cdot|$ on an affinoid Tate ring $(A,A^+)$ satisfying an additional condition $|A^+|\le 1$ modulo a natural equivalence relation.

Let us pick an element $|\cdot| \in \Spa(A,A^+)$, and set $\fp:=\{x\in A^{+}~|~|x|=0\}$. 
Then by Lemma \ref{semivaluation2} below, $\fp$ is a prime ideal, and 
$|\cdot|$ defines a valuation ring $V_{|\cdot|} \subset \Frac(A^+/\fp)$. This valuation ring is \textit{microbial} attached to $|\cdot|$ in view of \cite[Proposition 7.3.7]{Bh17}. For microbial valuation rings, we refer the reader to \cite{Hu96}.

\begin{lemma}
\label{semivaluation2}
Let $|\cdot|:A \to \Gamma \cup \{0\}$ be a semivaluation. Then $\fp:=\{x\in A~|~|x|=0\}$ is a prime ideal of $A$. 
Moreover, $|\cdot|$ uniquely extends to a valuation $|\cdot|_{\fp}:\Frac(A/\fp) \to \Gamma \cup \{0\}$, and $V_{|\cdot|}:=\{x \in \Frac(A/\mathfrak{p})~|~|x|_{\fp}\le 1\}$ is a valuation ring with its field of fractions $\Frac(A/\fp)$.
\end{lemma}

\begin{proof}
This is an easy exercise, using the properties stated in Definition \ref{semivaluation1}.
\end{proof}

The prime ideal $\fp$ in Lemma \ref{semivaluation2} is called the \textit{support} of the semivaluation $|\cdot|$.

\subsection{Perfectoid algebras}
Let us recall the notion of perfectoid algebras over a perfectoid field as defined in \cite{Sch12}. 
These are a special class of Banach algebras; see \cite[\S 2.4]{NS19} for the definition of Banach rings in this context and how they are related to Tate rings.

\begin{definition}[Perfectoid $K$-algebra]
Fix a perfectoid field $K$ and let $\mathcal{A}$ be a Banach $K$-algebra. Then we say that $\mathcal{A}$ is a \textit{perfectoid K-algebra}, if the following conditions hold:
\begin{enumerate}
\item
The set of powerbounded elements $\mathcal{A}^\circ \subset \mathcal{A}$ is open and bounded.

\item
The Frobenius endomorphism on $\mathcal{A}^\circ/(p)$ is surjective.
\end{enumerate}
\end{definition}

We will recall the almost variant of perfectoid algebras; see \cite{An1}.

\begin{definition}[Almost perfectoid $K$-algebra]
\label{Defalmostperf}
Fix a perfectoid field $K$ and let $\mathcal{A}$ be a Banach $K$-algebra with a basic setup $(\mathcal{A}^\circ,I)$. Then we say that $\mathcal{A}$ is \textit{I-almost perfectoid}, if the following conditions hold:
\begin{enumerate}
\item
The set of powerbounded elements $\mathcal{A}^\circ \subset \mathcal{A}$ is open and bounded.

\item
The Frobenius endomorphism $Frob:\mathcal{A}^\circ/(p) \to \mathcal{A}^\circ/(p)$ is $I$-almost surjective.
\end{enumerate}
\end{definition}

\begin{example}
Let $\mathcal{A}$ be a perfectoid $K$-algebra with a nonzero nonunit element $t \in K^\circ$ admitting a compatible system of $p$-power roots $\{t^{\frac{1}{p^n}}\}_{n>0}$. Fix any regular element $g \in \mathcal{A}^\circ$ that admits a compatible system $\{g^{\frac{1}{p^n}}\}_{n>0}$. Let $I:=\bigcup_{n>0} (tg)^{\frac{1}{p^n}}$. Then the pair $(\mathcal{A}^\circ,I)$ gives a basic setup, which is a prototypical example that is encountered in this article.
\end{example}

\section{Preliminary lemmas}
\label{SecPreLem}

\subsection{Some properties of complete integral closure}\label{Cicuc}

Here we investigate several properties of complete integral closure.  
First, we study how it behaves under separated completion. 
Thus we start with recalling the following lemma, which is a key for the main results of \cite{BL95}; see also \cite[Tag 0BNR]{Stacks} for a proof and related results.

\begin{lemma}[Beauville-Laszlo]
\label{Beauville-Laszlo}
Let $A$ be a ring with a regular element $t\in A$ and let $\widehat{A}$ be the $t$-adic completion. Then $t$ is a regular element in $\widehat{A}$ and one has the commutative diagram:
$$
\begin{CD}
A @>>> \widehat{A} \\
@VVV @VVV \\
A[\frac{1}{t}] @>>> \widehat{A}[\frac{1}{t}]
\end{CD}
$$
that is cartesian. In other words, we have $A \cong A[\frac{1}{t}] \times_{\widehat{A}[\frac{1}{t}]} \widehat{A}$.
\end{lemma}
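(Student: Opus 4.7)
The plan is to dispatch the two claims in sequence, first establishing that $t$ remains a nonzero divisor on $\widehat A$, and then using this to verify the cartesian property by a diagram chase.

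For the nonzero divisor claim, I would take an element $\hat a \in \widehat A = \varprojlim_n A/t^nA$ with $t\hat a = 0$, represented by a compatible family $(a_n)$ with lifts $\tilde a_n \in A$. The vanishing $t\tilde a_n \equiv 0 \pmod{t^n}$ yields an equation $t\tilde a_n = t^n c_n$ in $A$; since $t$ is a nonzero divisor of $A$, cancelling one $t$ gives $\tilde a_n = t^{n-1} c_n$. Passing to $A/t^{n-1}A$ and invoking the compatibility of the inverse system then forces $a_{n-1} = 0$ for every $n$, so $\hat a = 0$.

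For the cartesian identification, let $\varphi \colon A \to A[\tfrac{1}{t}] \times_{\widehat A[\tfrac{1}{t}]} \widehat A$ be the natural map. Injectivity is immediate since $t$ is a nonzero divisor of $A$, so already $A \hookrightarrow A[\tfrac{1}{t}]$. For surjectivity, I would take a pair $(s, \hat a)$ whose images in $\widehat A[\tfrac{1}{t}]$ agree, write $s = b/t^N$ with $b \in A$ and $N \ge 0$, and use the first part of the lemma to deduce $t^N \hat a = b$ in $\widehat A$ with no $t$-torsion obstruction. Choosing lifts $\tilde a_n \in A$ of $\hat a \bmod t^nA$ for each $n \ge N$, the equation $t^N\tilde a_n - b = t^n r_n$ lets me define $a := \tilde a_n - t^{n-N} r_n \in A$. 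A quick check confirms that $a$ is independent of the choice of $n$ (again because $t^N$ is a nonzero divisor), that $t^N a = b$, and that $a$ reduces to the class of $\hat a$ modulo $t^mA$ for every $m \ge 1$, so $\varphi(a) = (s, \hat a)$.

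The main obstacle is logical rather than computational: one cannot meaningfully compare $b/t^N$ with $\hat a$ inside $\widehat A[\tfrac{1}{t}]$ without first knowing that $t$ is a nonzero divisor on $\widehat A$, so the two parts of the lemma must be proved in the stated order. Once this is available, the surjectivity step becomes a routine but slightly delicate manipulation: the candidate $a \in A$ must simultaneously recover $s$ in $A[\tfrac{1}{t}]$ and $\hat a$ in $\widehat A$, and the freedom to absorb high-order $t$-powers into the choice of lift $\tilde a_n$ is exactly what makes both conditions compatible.
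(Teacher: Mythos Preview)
Your argument is correct. The nonzero divisor claim is handled cleanly, and in the surjectivity step the one point worth making explicit is that the equation $t^N\tilde a_n - b \in t^n\widehat A$ actually implies $t^N\tilde a_n - b \in t^n A$, which follows from the isomorphism $A/t^nA \cong \widehat A/t^n\widehat A$; once this is said, your construction of $a$ and the verification that $\varphi(a)=(s,\hat a)$ go through exactly as you describe.

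As for comparison: the paper does not give a proof at all. It simply attributes the result to Beauville--Laszlo and points to \cite[Tag 0BNR]{Stacks} for details. Your write-up is therefore more self-contained than the paper's treatment, supplying an elementary direct argument where the paper defers to the literature. The Stacks Project reference proves a more general statement (gluing of $t$-regular modules along $A \to \widehat A$ and $A \to A[\frac{1}{t}]$), of which the ring-level cartesian square is the special case $M=A$; your hands-on approach trades that generality for a short, transparent verification tailored to exactly what the paper needs.
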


\begin{corollary}\label{VariantBL}
Let $A$ be a Tate ring, and let $\mathcal{A}$ be the separated completion of $A$. 
Then the natural map $\psi: A\to \mathcal{A}$ restricts to $\psi^\circ: A^\circ\to \mathcal{A}^{\circ}$, and the commutative diagram: 
\[\xymatrix{
A^{\circ}\ar[r]^{\psi^\circ}\ar[d]&\mathcal{A}^{\circ}\ar[d]\\
A\ar[r]^\psi&\mathcal{A}
}\]
is cartesian. 
\end{corollary}
\begin{proof}
Let $(A_{0}, (t))$ be a pair of definition of $A$, and $\widehat{A_{0}}$ the $t$-adic completion of $A_{0}$. Then $\mathcal{A}$ is the Tate ring associated to $(\widehat{A_0}, (t))$. 
Hence the first assertion is clear. To check the second assertion, pick $a\in A$ such that $\psi(a)\in \mathcal{A}^{\circ}$. Then there exists some $l>0$ such that $\psi(t^la^{n})=t^{l}\psi(a)^{n}\in \widehat{A_{0}}$ for every $n\geq 0$. 
Hence $t^{l}a^{n}\in A_{0}$ for every $n\geq 0$ by Lemma \ref{Beauville-Laszlo}. Therefore, $a\in A^{\circ}$ as desired. 
\end{proof}

The following lemma is quite useful and often used in basic theory of perfectoid spaces. We take a copy from Bhatt's lecture notes \cite{Bh17}.

\begin{lemma}
\label{p-adicnormal}
Let $A$ be a ring with a regular element $t \in A$ and let $\widehat{A}$ be the $t$-adic completion of $A$. Fix a prime number $p>0$. Then the following assertions hold.

\begin{enumerate}
\item
Suppose that $A$ is integrally closed in $A[\frac{1}{t}]$. Then $\widehat{A}$ is integrally closed in $\widehat{A}[\frac{1}{t}]$. If moreover $A$ admits a compatible system of $p$-power roots $\{t^{\frac{1}{p^n}}\}_{n>0}$, then $t^{-\frac{1}{p^\infty}} A$ is integrally closed in $A[\frac{1}{t}]$.

\item
Suppose that $A$ is completely integrally closed in $A[\frac{1}{t}]$. Then $\widehat{A}$ is completely integrally closed in $\widehat{A}[\frac{1}{t}]$. If moreover $A$ admits a compatible system of $p$-power roots $\{t^{\frac{1}{p^n}}\}_{n>0}$, then $t^{-\frac{1}{p^\infty}} A$ is completely integrally closed in $A[\frac{1}{t}]$.
\end{enumerate}
\end{lemma}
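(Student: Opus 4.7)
The plan is to prove all four statements by a common approximation technique, leveraging the Beauville--Laszlo cartesian square from Lemma~\ref{Beauville-Laszlo}. That square implies $A\hookrightarrow \widehat{A}$ is injective and that $A/t^N A\cong \widehat{A}/t^N\widehat{A}$ for every $N$, so in particular $t^N\widehat{A}\cap A = t^N A$. These will be my main descent tools for transferring algebraic relations between $A$ and $\widehat{A}$.

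For the two statements about $\widehat{A}$ (the first halves of (1) and (2)), the approach is: given $b\in\widehat{A}[\tfrac{1}{t}]$ satisfying an integral (resp.\ almost-integral) relation over $\widehat{A}$, set $y := t^k b\in\widehat{A}$ and choose $y_N\in A$ with $y\equiv y_N\pmod{t^N\widehat{A}}$; in the integral case one also approximates the coefficients $a_i\in\widehat{A}$ by $a_{i,N}\in A$ modulo $t^N$. The aim is to show that $\beta_N := y_N/t^k\in A[\tfrac{1}{t}]$ inherits the analogous relation over $A$. In the integral case, direct substitution into the dehomogenised equation $y^n+\sum a_i t^{k(n-i)}y^i=0$ produces an error of the form $t^{N-kn}\eta'$ with $\eta'\in A$ (via Beauville--Laszlo, since the error term lives in $A[\tfrac{1}{t}]\cap\widehat{A}=A$), which can be absorbed into the constant term to give a genuine monic equation for $\beta_N$ over $A$; integral closedness of $A$ then yields $\beta_N\in A$, and $b-\beta_N\in t^{N-k}\widehat{A}$ forces $b\in\widehat{A}$ for $N$ large. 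The almost-integral case proceeds along the same lines once one establishes $t^k\beta_N^n\in A$ uniformly in $n$, at which point complete integral closedness of $A$ gives $\beta_N\in A$.

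For the two statements about $A_\infty := t^{-1/p^\infty}A$ (the second halves), the plan is to exploit the defining property $t^{1/p^m}\alpha\in A$ for $\alpha\in A_\infty$ directly. In part (1), multiplying the monic equation $b^n+\alpha_{n-1}b^{n-1}+\cdots+\alpha_0 = 0$ through by $t^{n/p^m}$ converts it into a monic polynomial for $c_m := t^{1/p^m}b$ whose coefficients $\alpha_i t^{(n-i)/p^m} = (t^{1/p^m}\alpha_i)\cdot t^{(n-i-1)/p^m}$ all lie in $A$; integral closedness of $A$ gives $c_m\in A$ for every $m$, hence $b\in A_\infty$. In part (2), the hypothesis $t^k b^n\in A_\infty$ unpacks to $t^{k+1/p^m}b^n\in A$ for all $m,n$; taking $m=0$ already yields $t^{k+1}b^n\in A$ uniformly in $n$, so $b$ is almost integral over $A$ and complete integral closedness gives $b\in A\subset A_\infty$.

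I expect the main obstacle to be securing uniformity in $n$ in the almost-integral $\widehat{A}$ case: a naive binomial expansion of $y_N^n - y^n$ only controls the linear term by $t^N$, which is insufficient as $n$ grows. The resolution should be to expand around $y$ rather than around $y_N$, writing $y_N^n = (y-t^N\delta)^n = \sum_{i}\binom{n}{i}y^{n-i}(-t^N\delta)^i$, and to exploit the built-in divisibility $y^m\in t^{k(m-1)}\widehat{A}$ that follows from $t^k b^m\in\widehat{A}$. Each cross-term then lies in $t^{k(n-i-1)+iN}\widehat{A}$, and the required estimate $k(n-i-1)+iN\ge k(n-1)$ collapses to the uniform condition $N\ge k$, independent of $n$. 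Combined with Beauville--Laszlo (descending $y_N^n/t^{k(n-1)}$ from $\widehat{A}$ to $A$), this delivers $t^k\beta_N^n\in A$ for every $n$ with a single choice of $N$, which is exactly the uniform almost-integrality needed to invoke the hypothesis on $A$.
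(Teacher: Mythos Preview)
Your proposal is correct and essentially gives the standard argument. The paper itself does not provide a proof: it simply refers to \cite[Lemma 5.1.1, Lemma 5.1.2 and Lemma 5.1.3]{Bh17}, noting only that Lemma~\ref{Beauville-Laszlo} plays a role. Your approximation scheme via $A/t^N A\cong \widehat{A}/t^N\widehat{A}$ is precisely how those referenced lemmas are proved, so there is no genuine divergence in approach.

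One minor imprecision worth flagging: you claim that the Beauville--Laszlo square ``implies $A\hookrightarrow\widehat{A}$ is injective''. This does not follow from the cartesian square alone (the kernel is $\bigcap_N t^N A$, which need not vanish for a general $t$-torsion free ring), and likewise the literal intersection $A[\tfrac{1}{t}]\cap\widehat{A}$ inside $\widehat{A}[\tfrac{1}{t}]$ is only well-posed once one knows $A[\tfrac{1}{t}]\to\widehat{A}[\tfrac{1}{t}]$ is injective. Fortunately your argument never actually uses injectivity: all you need is that the preimage of $t^N\widehat{A}$ in $A$ is $t^N A$, which is exactly the content of $A/t^N A\cong \widehat{A}/t^N\widehat{A}$. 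With that reading, every step---the absorption of the $t^{N-kn}$ error into the constant term in the integral case, and the uniform estimate $k(n-i-1)+iN\ge k(n-1)$ for $N\ge k$ in the almost-integral case---goes through cleanly. The arguments for $t^{-1/p^\infty}A$ in the second halves of (1) and (2) are straightforward and correct as written; note that your part (2) argument in fact yields the stronger conclusion $b\in A$, consistently with Lemma~\ref{lem07031}.
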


\begin{proof}
We refer the reader to \cite[Lemma 5.1.1, Lemma 5.1.2 and Lemma 5.1.3]{Bh17}, and \cite[Lemma 2.7]{NS19}. Here we point out that Lemma \ref{Beauville-Laszlo} plays a role in the proofs.
\end{proof}

The following lemma is easy to prove, but plays an important role in our arguments.

\begin{lemma}
\label{completeintegralmap}
Let $A$ be a ring, and let $t \in A$ be a regular element. Fix a prime number $p>0$. Suppose that $A$ admits a compatible system of $p$-power roots $\{t^{\frac{1}{p^n}}\}_{n>0}$. Then for any $A[\frac{1}{t}]$-algebra $B$, we have $A^{*}_{B} = (t^{-\frac{1}{p^\infty}}A)^{*}_{B}$. 
\end{lemma}

\begin{proof}
It suffices to show that $(t^{-\frac{1}{p^\infty}}A)^{*}_{B}\subset A^{*}_{B}$. 
Let $A'$ be the image of $A$ in $B$. 
Then the image of $t^{-\frac{1}{p^{\infty}}}A$ in $B$ is contained in $t^{-\frac{1}{p^\infty}}A'$. Thus we have $(t^{-\frac{1}{p^\infty}}A)^{*}_{B}\subset (t^{-\frac{1}{p^\infty}}A')^{*}_{B}$ and $A^*_{B}=(A')^{*}_{B}$. 
Hence we may assume that $A=A'$, that is,  $A[\frac{1}{t}]$ is a subring of $B$.  
Pick $x\in(t^{-\frac{1}{p^\infty}}A)^{*}_{B}$. 
Then, $\sum^\infty_{n=0}t^{-\frac{1}{p^{\infty}}}A\cdot x^{n}$ is contained in a finitely generated $t^{-\frac{1}{p^{\infty}}}A$-submodule of $B$, and thus so is $\sum_{n=0}^\infty A\cdot x^{n}$. 
Hence $t\sum^\infty_{n=0}A \cdot x^{n}$ is contained in a finitely generated $A$-submodule $M$ of $B$. 
Let $b_{1},\ldots, b_{r}$ be a system of generators of $M$ over $A$. 
Then, since $t$ is invertible in $B$, we obtain a finitely generated $A$-submodule $\sum^{r}_{i=1}A\cdot\frac{b_{i}}{t}$ of $B$, which contains $\sum^\infty_{n=0}A\cdot x^{n}$, as desired. 
\end{proof}

The following corollary is an immediate consequence of Lemma \ref{completeintegralmap}.

\begin{corollary}
\label{lem07031}
Let $A$ be a ring with a regular element $t \in A$ such that $A$ is completely integrally closed in $A[\frac{1}{t}]$. Fix a prime number $p>0$. Suppose that $A$ admits a compatible system of $p$-power roots $\{t^{\frac{1}{p^n}}\}_{n>0}$. Then we have $t^{-\frac{1}{p^\infty}} A=A$ (in particular, $t^{-\frac{1}{p^\infty}} A$ is completely integrally closed in $A[\frac{1}{t}]$). 
\end{corollary}


Now let us discuss complete integral closedness of inverse limits.

\begin{lemma}
\label{lem07032}
Let $A$ be a ring with an element $t\in A$, let $\Lambda$ be a directed poset, and let $\{A_{\lambda}\}_{\lambda\in \Lambda}$ an inverse system of $A$-algebras. Suppose that each $A_{\lambda}$ is $t$-torsion free and completely integrally closed in $A_{\lambda}[\frac{1}{t}]$. Then $\varprojlim_{\lambda} A_{\lambda}$ is a $t$-torsion free $A$-algebra and completely integrally closed in $(\varprojlim_{\lambda} A_{\lambda})[\frac{1}{t}]$. 
\end{lemma}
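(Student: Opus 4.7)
The plan is to set $B := \varprojlim_\lambda A_\lambda$, first verify $t$-torsion freeness of $B$, and then establish complete integral closedness in $B[\frac{1}{t}]$ by projecting any almost integral element to each $A_\lambda$ and reassembling the resulting components. The $t$-torsion freeness is immediate: if $tb = 0$ for $b = (b_\lambda) \in B$, then $tb_\lambda = 0$ for every $\lambda$, and by hypothesis $b_\lambda = 0$, so $b = 0$. In particular $B \hookrightarrow B[\frac{1}{t}]$, and it makes sense to ask about complete integral closedness in $B[\frac{1}{t}]$.

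For the main step, I would take $x \in B[\frac{1}{t}]$ almost integral over $B$ and write $x = b/t^m$ with $b \in B$ and $m \ge 0$. The almost integrality furnishes a finitely generated $B$-submodule $M \subset B[\frac{1}{t}]$ containing every $x^n$; clearing a common denominator among finitely many generators of $M$ produces $N \ge 0$ with $t^N x^n \in B$ for all $n \ge 0$. For each $\lambda$ the projection $\pi_\lambda : B \to A_\lambda$ extends to a map $B[\frac{1}{t}] \to A_\lambda[\frac{1}{t}]$ sending $x$ to $x_\lambda := \pi_\lambda(b)/t^m \in A_\lambda[\frac{1}{t}]$. The relations $t^N x^n \in B$ project to $t^N x_\lambda^n \in A_\lambda$ for every $n \ge 0$, so $x_\lambda$ is almost integral over $A_\lambda$; by hypothesis then $x_\lambda \in A_\lambda$.

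It remains to reassemble the family $(x_\lambda)_\lambda$ into an element of $B$ recovering $x$. Each $x_\lambda$ is the unique solution in the $t$-torsion-free ring $A_\lambda$ of the equation $t^m y = \pi_\lambda(b)$. For $\mu \ge \lambda$ the transition map $A_\mu \to A_\lambda$ carries $x_\mu$ to some $y_\lambda \in A_\lambda$ satisfying $t^m y_\lambda = \pi_\lambda(b) = t^m x_\lambda$, whence $y_\lambda = x_\lambda$ by $t$-torsion freeness of $A_\lambda$. Thus $(x_\lambda)_\lambda$ defines an element of $B$, and multiplying by $t^m$ yields $b = t^m \cdot (x_\lambda)_\lambda$ in $B$, so $x = b/t^m = (x_\lambda)_\lambda \in B$ inside $B[\frac{1}{t}]$, as desired.

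I do not expect a genuine obstacle here; the argument is essentially formal bookkeeping among inverse limits and localizations. The one step requiring some care is the passage from the abstract almost-integrality condition, namely the existence of a finitely generated bounding submodule, to the concrete assertion $t^N x^n \in B$ for a fixed $N$. This is the standard common-denominator trick, available because $B[\frac{1}{t}]$ is the localization of $B$ at the non-zero-divisor $t$, so finitely many generators of $M$ admit a common denominator that is a power of $t$; once this is in place, the complete integral closedness of each $A_\lambda$ does all the work, and $t$-torsion freeness at each level provides the uniqueness needed to glue the pieces into $B$.
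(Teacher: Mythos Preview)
Your proposal is correct and follows essentially the same approach as the paper's proof: both pass from the abstract almost-integrality condition to a uniform bound $t^N x^n\in B$, project componentwise to invoke complete integral closedness of each $A_\lambda$, and then use $t$-torsion freeness to glue. The paper phrases the gluing step more tersely as ``$a_\lambda\in t^d A_\lambda$ for all $\lambda$, hence $a\in t^d(\varprojlim_\lambda A_\lambda)$'', whereas you spell out the compatibility check under transition maps explicitly; this is the same argument.
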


\begin{proof}
Clearly, $\varprojlim_{\lambda} A_{\lambda}$ is a $t$-torsion free $A$-algebra. Pick an element $b\in (\varprojlim_{\lambda} A_{\lambda})[\frac{1}{t}]$ which is almost integral over $\varprojlim_{\lambda} A_{\lambda}$. Then there exists some $m>0$ such that $t^{m}b^{n}\in\varprojlim_{\lambda} A_{\lambda}$ for every $n>0$. Take $d>0$ and $a=(a_{\lambda})\in \varprojlim_{\lambda} A_{\lambda}$ for which $t^{d}b=a$. Then for every $n>0$, it follows that $t^{dn+m}b^{n}=t^{m}a^{n}$, which implies $t^{m}a^{n}\in t^{dn}(\varprojlim_{\lambda} A_{\lambda})$. Thus for each $\lambda\in \Lambda$, the element $\frac{a_{\lambda}}{t^{d}}\in A_{\lambda}[\frac{1}{t}]$ satisfies $t^{m}(\frac{a_{\lambda}}{t^{d}})^{n}\in A_{\lambda}$ for every $n$. Since $A_{\lambda}$ is completely integrally closed in $A_{\lambda}[\frac{1}{t}]$, one finds that $a_{\lambda}\in t^{d}A_{\lambda}\ (\forall\lambda\in\Lambda)$ and thus $a\in t^{d}(\varprojlim_{\lambda} A_{\lambda})$. Hence $b=\frac{a}{t^d} \in \varprojlim_{\lambda}A_{\lambda}$, as desired. 
\end{proof}

In the situation of Lemma \ref{almostelement}, complete integral closedness is preserved under $(~)_*$.

\begin{lemma}
\label{Lem0724}
Let $A \hookrightarrow B$ be a ring extension such that $A$ is completely integrally closed in $B$. Suppose that $A$ has an element $t$ such that $B$ is $t$-torsion free and $A$ admits a compatible system of $p$-power roots $\{t^{\frac{1}{p^{n}}}\}_{n>0}$. Then $t^{-\frac{1}{p^{\infty}}}A$ is completely integrally closed in $t^{-\frac{1}{p^{\infty}}}B$. 
\end{lemma}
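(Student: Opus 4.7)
The plan is to deduce the conclusion from the assumed complete integral closedness of $A$ in $B$, after converting an almost-integral relation over $R := t^{-1/p^{\infty}}A$ inside $S := t^{-1/p^{\infty}}B$ into an almost-integral relation over $A$ inside $B$. Suppose $b \in S$ is almost integral over $R$ in $S$, witnessed by a finitely generated $R$-submodule $M = \sum_{i=1}^{k} R\, y_i \subset S$ with $b^n \in M$ for every $n \geq 0$. Since $b \in S$, the element $b_m := t^{1/p^m} b$ lies in $B$ for every $m \geq 0$ (by Lemma~\ref{almostelement}); hence it is enough to show that each $b_m$ is almost integral over $A$ in $B$, for then $b_m \in A$ by the hypothesis on $A \hookrightarrow B$, and so $b \in R$ by the definition of $R$.

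For each $n$ write $b^n = \sum_i r_i^{(n)} y_i$ with $r_i^{(n)} \in R$. The key maneuver is to split the prefactor $t^{n/p^m}$ of $b_m^n = t^{n/p^m}\, b^n$ as $t^{1/p^m} \cdot (t^{1/p^m})^{n-2} \cdot t^{1/p^m}$ (for $n \geq 2$) and absorb one $t^{1/p^m}$ into each $r_i^{(n)}$ and one into each $y_i$. Setting $a_i^{(n)} := t^{1/p^m} r_i^{(n)}$ and $w_i := t^{1/p^m} y_i$, the very definitions of $R$ and $S$ give $a_i^{(n)} \in A$ and $w_i \in B$, with $w_i$ independent of $n$. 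One obtains, for $n \geq 2$,
$$
b_m^n \;=\; \sum_i a_i^{(n)}\, (t^{1/p^m})^{n-2}\, w_i \;\in\; \sum_i A\, w_i \;\subset\; B,
$$
since $a_i^{(n)} (t^{1/p^m})^{n-2} \in A$. Together with $b_m^0 = 1$ and $b_m^1 = b_m$, this gives $\sum_{n \geq 0} A\, b_m^n \subset A + A b_m + \sum_i A w_i$, which is a finitely generated $A$-submodule of $B$, finishing the proof.

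There is no deep analytic obstacle here; the point worth flagging is that a direct appeal to Lemma~\ref{p-adicnormal}(2) is not available, because that lemma requires $A$ to be completely integrally closed in $A[\tfrac{1}{t}]$, a condition in general strictly stronger than complete integral closedness in $B$ (note that $B$ need not contain $A[\tfrac{1}{t}]$). The substantive content is therefore the redistribution step: a factor $t^{1/p^m}$ absorbed into an $R$-coefficient lives in $A$ by the definition of $t^{-1/p^{\infty}}A$, and a second factor absorbed into an $S$-generator lives in $B$ by the definition of $t^{-1/p^{\infty}}B$; this is what converts a finitely generated $R$-module inside $S$ into a finitely generated $A$-module inside $B$.
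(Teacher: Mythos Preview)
Your proof is correct and follows essentially the same strategy as the paper: reduce to showing that $t^{1/p^m}b$ is almost integral over $A$ in $B$ for every $m$, then invoke the hypothesis. The paper compresses the containment into the single chain $\sum_n A(t^{1/p^k}c)^n \subset t^{1/p^k}\bigl(\sum_n R\,c^n\bigr)$ and then asserts that $t^{1/p^k}$ times a finitely generated $R$-submodule of $S$ lands in a finitely generated $A$-submodule of $B$; your explicit ``two copies of $t^{1/p^m}$'' redistribution (one into the coefficient, one into the generator) is exactly what justifies that step, and your separate treatment of $n=0,1$ is in fact a bit cleaner than the paper's displayed inclusion, which is off by the constant term.
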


\begin{proof}
Pick an element $c\in t^{-\frac{1}{p^{\infty}}}B$ which is almost integral over $t^{-\frac{1}{p^{\infty}}}A$. We would like to show that $t^{\frac{1}{p^{k}}}c\ \in A$ for every $k>0$. Since $A$ is completely integrally closed in $B$, it suffices to check that each $t^{\frac{1}{p^{k}}}c\in B$ is almost integral over $A$. Now by assumption, 
$\sum^{\infty}_{n=0}t^{-\frac{1}{p^{\infty}}}A\cdot c^{n}$ is contained in a finitely generated $t^{-\frac{1}{p^{\infty}}}A$-submodule of $t^{-\frac{1}{p^{\infty}}}B$. Hence $t^{\frac{1}{p^{k}}}(\sum^{\infty}_{n=0}t^{-\frac{1}{p^{\infty}}}A\cdot c^{n})$ is contained in a finitely generated $A$-submodule of $B$ for every $k>0$. Meanwhile, it follows that   
$$
\sum^\infty_{n=0} A\cdot (t^{\frac{1}{p^{k}}}c)^n
\subset
t^{\frac{1}{p^{k}}}\biggl(\sum^{\infty}_{n=0}A\cdot c^{n}\biggr)
\subset
t^{\frac{1}{p^{k}}}\biggl(\sum^{\infty}_{n=0}t^{-\frac{1}{p^{\infty}}}A\cdot c^{n}\biggr).
$$
Therefore, $t^{\frac{1}{p^{k}}}c\in B$ is almost integral over $A$, as desired. 
\end{proof}

Next we consider several types of ring extensions. 
The following lemmas are mainly used in \S\ref{SecAppWPA}. 

\begin{lemma}
\label{completenormal}
The following assertions hold.
\begin{enumerate}
\item
Let $R$ be a Noetherian integrally closed domain with its absolute integral closure $R^+$ and assume that $A$ is a ring such that $R \subset A \subset R^+$. Then $A$ is integrally closed in $\Frac(A)$ if and only if $A$ is completely integrally closed in $\Frac(A)$.

\item
Let $R \subset S \subset T$ be ring extensions. Assume that $R$ is completely integrally closed in $T$. Then $R$ is also completely integrally closed in $S$.
\end{enumerate}
\end{lemma}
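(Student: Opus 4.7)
My plan is to dispose of part (2) first since it is essentially formal, and then to concentrate on the substance of part (1). For part (2), suppose $b \in S$ is almost integral over $R$, so there exists a finitely generated $R$-submodule $M \subset S$ containing $\sum_{n \ge 0} R \cdot b^n$. Viewing $M$ inside $T$ via $S \subset T$, the same $M$ witnesses that $b$ is almost integral over $R$ when regarded as an element of $T$, and the hypothesis that $R$ is completely integrally closed in $T$ forces $b \in R$.

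For part (1), the direction ``completely integrally closed implies integrally closed'' is automatic: an element $b \in \Frac(A)$ integral over $A$ satisfies $\sum_n A \cdot b^n \subset A[b]$, a finitely generated $A$-module, and hence is almost integral. For the nontrivial converse I would assume $A$ is integrally closed in $\Frac(A)$ and take an arbitrary $b \in \Frac(A)$ almost integral over $A$, with nonzero witness $t \in A$ satisfying $tb^n \in A$ for all $n \ge 0$. Writing $K := \Frac(R)$, the inclusion $A \subset R^+$ forces $\Frac(A) \subset \overline{K}$, so $L := K(t,b)$ is a finite algebraic extension of $K$; I would introduce $B :=$ integral closure of $R$ in $L$ as an intermediate ring.

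The key input is the Mori--Nagata theorem: since $R$ is a Noetherian normal domain and $L/K$ is a finite extension, $B$ is a Krull domain and hence completely integrally closed in $L = \Frac(B)$. Both $t$ and each $tb^n$ are integral over $R$ and lie in $L$, so they all belong to $R^+ \cap L = B$, which means $b \in L$ is almost integral over $B$ and therefore $b \in B \subset R^+$. Combined with $b \in \Frac(A)$, this places $b$ in $R^+ \cap \Frac(A)$, which I would identify with the integral closure of $A$ in $\Frac(A)$: the inclusion $R \subset A$ gives one direction, while the reverse follows from transitivity of integrality once one notes that $A$ is itself integral over $R$ (as $A \subset R^+$). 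The integral closedness hypothesis on $A$ then pins down $b \in A$. The main obstacle is marshalling Mori--Nagata cleanly to produce a Krull domain $B$ sitting between $A$ and $\overline{K}$; once this is in place, everything else is formal bookkeeping that uses only the assumed closure properties of $R$ and $A$.
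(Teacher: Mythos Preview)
Your proof is correct. Part (2) matches the paper verbatim. For part (1) the paper does not spell out an argument at all: it simply refers the reader to the proof of \cite[Theorem 5.9]{Sh15}, noting that the characteristic-$p$ hypothesis there is irrelevant. Your self-contained argument---reducing to a finite subextension $L=K(t,b)$, invoking Mori--Nagata to make the integral closure $B$ of $R$ in $L$ a Krull (hence completely integrally closed) domain, and then identifying $R^+\cap\Frac(A)$ with the integral closure of $A$---is exactly the kind of argument one expects to find behind that citation, and it has the advantage of being written out in full here. Nothing is missing; the only step worth making explicit when you write it up is why a single conductor $t\in A\setminus\{0\}$ exists, which follows because $A$ is a domain and the finitely generated witnessing module sits inside $\Frac(A)$.
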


\begin{proof}
$(1)$: The proof is found in the proof of \cite[Theorem 5.9]{Sh15}, whose statement is given only for Noetherian normal rings of characteristic $p>0$. However, the argument there remains valid for Noetherian normal rings of arbitrary characteristic (see also Proposition \ref{propIC=CIC}).

$(2)$: For $s \in S$, assume that $\sum_{n=0}^\infty R \cdot s^n$ is contained in a finitely generated $R$-submodule of $S$. Then this property remains true when regarded as an $R$-submodule of $T$. So we have $s \in R$ by our assumption.
\end{proof}

\begin{lemma}
\label{normallocaldomain}
Let $A$ be a normal domain with field of fractions $\Frac(A)$ and assume that $\Frac(A) \hookrightarrow B$ is an integral extension such that $B$ is reduced. Denote by $C:=A^+_B$ the integral closure of $A$ in $B$. Then $C_\fp$ is a normal domain for any prime ideal $\fp$ of $C$.
\end{lemma}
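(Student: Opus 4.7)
The plan is to realize $C$ as a filtered union of rings each of which is a finite product of normal domains, and then pass to the colimit. Set $K := \Frac(A)$ and, since $B$ is integral over the field $K$, write $B = \bigcup_i B_i$ as the filtered union of its finite $K$-subalgebras. Each $B_i$ is finite-dimensional and reduced, hence Artinian reduced, so $B_i = \prod_{j} L_{i,j}$ is a finite product of finite field extensions of $K$. Setting $C_i := A^+_{B_i}$, this decomposition descends to $C_i = \prod_j C_{i,j}$ with $C_{i,j}$ the integral closure of $A$ in $L_{i,j}$; because $A$ is a normal domain and $L_{i,j}/K$ is finite, each $C_{i,j}$ is a normal domain with fraction field $L_{i,j}$. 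Since every element of $C$ lies in some $B_i$ and is integral over $A$, we obtain $C = \bigcup_i C_i$.

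Now fix $\fp \in \Spec C$ and put $\fp_i := \fp \cap C_i$. As $C_i$ is a finite product of domains, $\fp_i$ picks out a unique factor: there exist a unique index $j_i$ and a prime $\fp_{i,j_i} \subset C_{i,j_i}$ with $(C_i)_{\fp_i} \cong (C_{i,j_i})_{\fp_{i,j_i}}$, a normal local domain. For $i \le i'$, the inclusion $C_i \hookrightarrow C_{i'}$ matches the idempotents of $C_i$ with those of $C_{i'}$; the condition $\fp_{i'} \cap C_i = \fp_i$ forces the idempotent of $C_i$ corresponding to $j_i$ to map to an idempotent of $C_{i'}$ whose support contains $j_{i'}$. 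From this one checks that the composition $C_i \hookrightarrow C_{i'} \twoheadrightarrow C_{i',j_{i'}}$ factors through $C_{i,j_i}$, yielding a ring map $\phi_{i,i'} : C_{i,j_i} \to C_{i',j_{i'}}$ between normal domains. Extending to fraction fields produces a nonzero $K$-algebra map of fields $L_{i,j_i} \to L_{i',j_{i'}}$, which is automatically injective; hence $\phi_{i,i'}$ is injective. The same analysis shows $\phi_{i,i'}^{-1}(\fp_{i',j_{i'}}) = \fp_{i,j_i}$, so the induced transition map on localizations remains an injection of normal local domains.

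Since localization commutes with filtered colimits,
$$C_\fp \;=\; \varinjlim_i (C_i)_{\fp_i} \;=\; \varinjlim_i (C_{i,j_i})_{\fp_{i,j_i}}.$$
A filtered colimit of normal local domains along injective ring maps is a normal domain: any would-be zero-divisor relation is witnessed already at some finite stage, where the relevant ring is a domain, and any element of the fraction field of the colimit satisfying an integral dependence already lies in some $(C_{i,j_i})_{\fp_{i,j_i}}$ together with its coefficients, where normality then forces it into the colimit. The main technical obstacle is organizing the compatibility of the indices $j_i$ across the directed system; this is handled by the idempotent analysis sketched in the previous paragraph, and once that is in place the conclusion follows formally from the colimit description.
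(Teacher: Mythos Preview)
Your proof is correct and follows essentially the same approach as the paper: reduce to the case where $B$ is finite over $\Frac(A)$, where $B$ and hence $C$ decompose as finite products of (normal) domains, and then localize. The paper compresses your colimit argument into a single ``without loss of generality'' and invokes \cite[Tag 030C]{Stacks} for the finite case, whereas you spell out the filtered passage explicitly; one small remark is that your injectivity of $\phi_{i,i'}$ is cleaner if you observe that it is the \emph{restriction} of the field map $L_{i,j_i}\to L_{i',j_{i'}}$ already present in $B_i\hookrightarrow B_{i'}$, rather than phrasing it as an extension to fraction fields.
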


\begin{proof}
Notice that $B$ can be written as the filtered colimit of finite integral subextensions $\Frac(A) \to B' \to B$. Without loss of generality, we may assume and do that $\Frac(A) \to B$ is a finite integral extension. Since $\Frac(A)$ is a field, $B$ is a reduced Artinian ring, so that we can write $B=\Pi_{i=1}^m L_i$ with $L_i$ being a field. Since $A \to C$ is torsion free and integral, we see that $\Frac(A) \otimes_A C$ is the total ring of fractions of $C$, which is just $B$. In other words, $C$ has finitely many minimal prime ideals, because so does $B$. Then by \cite[Tag 030C]{Stacks},
$C$ is a finite product of normal domains, which shows that $C_\fp$ is a normal domain for any prime ideal $\fp \subset C$.
\end{proof}

\subsection{Almost regular sequences and the ring of bounded functions}
\label{subsecARS}
The notion of \emph{almost regular sequences} often shows up in the main content of this article.

\begin{definition}[Almost regular sequence]
Let $(A,I)$ be a basic setup with a sequence of elements $x_1,\ldots,x_n$ in $A$ and let $M$ be an $A$-module. Then we say that $x_1,\ldots,x_n$ is an \textit{$I$-almost regular sequence in M} if
$$
b \cdot \big((x_1.\ldots,x_i):_M x_{i+1}\big) \subset (x_1,\ldots,x_i)M
$$
for any $b \in I$ and $i=0,\ldots,n-1$. In particular, we say that an element $x \in M$ is \textit{I-almost regular} if the kernel of the multiplication map $M \xrightarrow{x} M$ is annihilated by $I$.
\end{definition}

First of all, we record the following fundamental lemma. 

\begin{lemma}
\label{adiccompletion}
Let $(A,I)$ be a basic setup and assume that $a,b$ is an $I$-almost regular sequence on $A$. Let $\widehat{A}$ denote the $a$-adic completion of $A$. Then $a$ and $b$ are $I$-almost regular elements of $\widehat{A}$.
\end{lemma}

\begin{proof}
First we prove that $a$ is $I$-almost regular on $\widehat{A}$. For any $k>0$, the multiplication map $A \xrightarrow{a} A$ induces an $I$-almost injective map $A/(a^k) \xrightarrow{a} A/(a^{k+1})$. This forms a commutative diagram:
$$
\begin{CD}
A/(a^{k+1}) @>a>>A/(a^{k+2}) \\
@VVV @VVV \\
A/(a^{k}) @>a>>A/(a^{k+1}). \\
\end{CD}
$$
Taking inverse limits along the vertical directions respectively, $\widehat{A} \xrightarrow{a} \widehat{A}$ is $I$-almost injective. 

Next we prove that $b$ is $I$-almost regular. Let $t \in \widehat{A}$ be such that $bt=0$. Then one obviously has $bt \in a^n \widehat{A}$ for all $n>0$. Since $b$ is $I$-almost regular on $A/(a^n) \cong \widehat{A}/(a^n)$, it follows that $\epsilon t \in \bigcap_{n>0} a^n \widehat{A}=0$ for any $\epsilon \in I$.
\end{proof}

\begin{example}
We give a counterexample to Lemma \ref{adiccompletion} without almost regularity condition. Let us consider the subring:
$$
R:=\mathbb{Z}\big[\frac{x}{p},\frac{x}{p^2},\ldots\big] \subset \mathbb{Q}[x].
$$
Then it is clear that $R$ is a domain. However, after taking the $p$-adic completion $\widehat{R}$, since $x \in p^nR$, $x$ becomes zero in $\widehat{R}$. Therefore, $p$ is a regular element in $\widehat{R}$, while $x$ is not so. 
\\
\end{example}

Let $A$ be a ring with elements $f,g \in A$. Then we can consider the \textit{ring of bounded functions defined by a sequence $f^n,g$}, denoted by $A[\frac{f^n}{g}]$ as a subring of $A[\frac{1}{g}]$. In other words, we define
$$
A[\frac{f^n}{g}]:=\big(A[T]/(gT-f^n)\big)\big/\fa,
$$
where $\fa:=\bigcup_{m>0} (0: g^m)$ as an ideal of $A[T]/(gT-f^n)$. In the technical part of this paper, the following problem plays a central role.

\begin{Problem}[Algebraic formulation of Riemann's extension problem]
\label{RiemannExtProblem}
Study the ring-theoretic structure of the intersection
$$
\bigcap_{n>0} \biggl(A[\frac{f^n}{g}]\biggr)^{*}_{A[\frac{1}{fg}]}
$$
taken inside $A[\frac{1}{fg}]$. 
\end{Problem}

In his remarkable paper \cite{Sch15}, Scholze studied the perfectoid version of Problem \ref{RiemannExtProblem}, with an application to the construction of Galois representations using torsion classes in the cohomology of certain symmetric spaces. We need the following fact on the description of the ring of bounded functions as a Rees algebra. In the case that the relevant sequence is regular, the proof is found in \cite[Tag 0BIQ]{Stacks}.

\begin{lemma}
\label{blowupring1}
Let $(A,I)$ be a basic setup and let $f, g \in A$ be elements such that $f,g$ forms an $I$-almost regular sequence. Then the ring map
$$
A[T]/(gT-f) \to A[\frac{f}{g}];~T \mapsto \dfrac{f}{g}
$$
is an $I$-almost isomorphism.
\end{lemma}

\begin{proof}
It suffices to prove that $A[T]/(gT-f)$ is $I$-almost $g$-torsion free. So suppose that
\begin{equation}
\label{ReesRing12}
g^ka \in (gT-f)
\end{equation}
for some $a \in A[T]$ and some $k \in \mathbb{N}$. Let $\epsilon \in I$ be any element. Then we want to show that $\epsilon a \in (gT-f)$. Now we can find $b \in A[T]$ such that $g^ka=(gT-f)b$ and write this equation as
\begin{equation}
\label{Reesalg1}
bf=g(bT-g^{k-1}a).
\end{equation}
By the almost regularity of $f,g$ on $A[T]$, for any $\epsilon_{n_1} \in I$, it follows that $\epsilon_{n_1}(bT-g^{k-1}a)=fc$ for some $c \in A[T]$. Substituting this back into $(\ref{Reesalg1})$, we have $\epsilon_{n_1} bf=gfc$. As $f$ is $I$-almost regular, we have $\epsilon_{n_1}\epsilon_{n_2} b=\epsilon_{n_2}gc$ for any $\epsilon_{n_2} \in I$. Since $\epsilon_{n_1}\epsilon_{n_2}g^{k-1}a=\epsilon_{n_1}\epsilon_{n_2} bT-\epsilon_{n_2} fc$, we obtain $\epsilon_{n_1}\epsilon_{n_2}g^{k-1}a=\epsilon_{n_2}c(gT-f)$. Since $\epsilon_{n_1},\epsilon_{n_2} \in I$ are arbitrary and $I=I^2$, we find that 
$\epsilon g^{k-1}a \in (gT-f)$ for any $\epsilon \in I$. Arguing inductively on $k$ in view of $(\ref{ReesRing12})$, it follows that $\epsilon a \in (gT-f)$ for any $\epsilon \in I$, as desired.
\end{proof}

We prove a result which compares the ring of bounded functions under completion. This result will play a crucial role later.

\begin{proposition}
\label{RegularRatLoc}
Let the notation and the hypotheses be as in Lemma \ref{blowupring1}. Then for any $n \ge 0$, there is an $I$-almost isomorphism of rings:
$$
\widehat{A[\dfrac{f^n}{g}]} \xrightarrow{\approx} \widehat{\widehat{A}[\frac{f^n}{g}]},
$$
where $\widehat{(~)}$ is the $f$-adic completion.
\end{proposition}

\begin{proof}
In view of Lemma \ref{blowupring1}, we need to show that the natural map
\begin{equation}
\label{blowupring2}
\widehat{A[T]/(gT-f)} \to \widehat{\widehat{A}[T]/(gT-f)}
\end{equation}
is bijective. It suffices to show that for any $n>0$, $(\ref{blowupring2})$ is bijective after dividing out by the ideal generated by $f^n$ on both sides. So we get
$$
A[T]/(gT-f,f^n) \to \widehat{A}[T]/(gT-f,f^n),
$$
which is isomorphic to
$$
A/(f^n) \otimes_A A[T]/(gT-f) \to \widehat{A}/(f^n) \otimes_A A[T]/(gT-f). 
$$
Since $A/(f^n) \cong \widehat{A}/(f^n)$, we are done.
\end{proof}

\section{Finite \'etale extensions of Tate rings}\label{FEtTate}
Here we study some behavior of finite \'etale extensions of Tate rings under rational localization.

\subsection{Finite \'etale extensions of preuniform Tate rings}
Let us begin with a review on some results in \cite{NS19} used later. 
First we recall a canonical structure as a Tate ring that is induced on a module-finite algebra over a Tate ring. See \cite[Lemma 2.19 and Lemma 2.20]{NS19} for the next lemma.

\begin{lemma}
\label{tatefinex}
Let $A$ be a Tate ring and let $B$ be a module-finite $A$-algebra. 
Take a ring of definition $A_0\subset A$, a pseudouniformizer $t\in A_0$ and a finite generating set $S$ of $B$ over $A$. Let $M_0\subset B$ be the $A_0$-submodule generated by $S$. Equip $B$ with the linear topology defined by $\{t^nM_0\}_{n>0}$. 
Then the following assertions hold. 
\begin{enumerate}
\item
The topology on $B$ is independent of the choices of $A_0$, $t$ and $S$. 
\item
$B$ is a Tate ring with the following property:  
\begin{itemize}
\item{for every ring of definition $A_0$ and every pseudouniformizer $t\in A_0$ of $A$, there exists a ring of definition $B_0$ of $B$  that is an integral $A_0$-subalgebra of $B$ with finitely many generators and $t\in B_0$ is a pseudouniformizer of $B$. }
\end{itemize}
\end{enumerate}
\end{lemma}

Next we recall the definition of (pre)uniformity of Tate rings. 
\begin{definition}
Let $A$ be a Tate ring. We say that $A$ is \emph{preuniform} if $A^{\circ}$ is a ring of definition of $A$. 
Moreover, we say that $A$ is \emph{uniform} if $A$ is preuniform and complete and separated. 
\end{definition}

Permanence of (pre)uniformity is one of the most remarkable features of finite \'etale extensions of Tate rings. See \cite[Corollary 4.8 (1), (4), and (5)]{NS19} for the next proposition.

\begin{proposition}
\label{cor1547205}
Let $A$ be a preuniform Tate ring. Let $f: A\to B$ be a finite \'etale ring map. Equip $B$ with the canonical structure as a Tate ring (cf.\ Lemma \ref{tatefinex}). 
Then the following assertions hold. 
\begin{enumerate}
\item
$B$ is also preuniform. In particular, for any ring of definition $A_0$ of $A$, $(A_0)^+_B$ and $(A_0)^*_B$ are rings of definition of $B$.  
\item
If $f$ is injective, then $f^{-1}(B^{\circ})=A^{\circ}$. 
\item
If $A$ is uniform, then so is $B$. 
\end{enumerate}
\end{proposition}

When one considers separated completion of finite \'etale extensions of Tate rings, two types of extension of complete Tate rings appear. 
The following statement assures that they are isomorphic under the preuniformity assumption. See \cite[Corollary 4.10]{NS19} for the next proposition.

\begin{proposition}
\label{IsomCompPowerBd}
Let $A$ be a preuniform Tate ring with a pseudouniformizer $t$. 
Let $\mathcal{A}$ be the separated completion of $A$. Let $B$ be a finite \'etale $A$-algebra and denote by $\mathcal{B}$ the finite \'etale $\mathcal{A}$-algebra $B\otimes_A\mathcal{A}$. Equip $B$ and $\mathcal{B}$ with the canonical structure as a Tate ring (cf.\ Lemma \ref{tatefinex}) respectively. 
Let $\widehat{B^\circ}$ be the $t$-adic completion of $B^\circ$. 
Then the natural $A$-algebra homomorphism $\varphi: \mathcal{B}\rightarrow \widehat{B^\circ}[\frac{1}{t}]$ is an isomorphism which induces an isomorphism 
$\mathcal{B}^\circ\xrightarrow{\cong}\widehat{B^\circ}$. 
\end{proposition}

In addition, we record a complement to \S\ref{Cicuc}. 
Preuniform Tate rings fit into Galois theory of rings. 

\begin{lemma}
\label{GalPwrBdd}
Let $A$ be a Tate ring and let $A \hookrightarrow B$ be a finite Galois extension with Galois group $G$. Equip $B$ with the canonical structure as a Tate ring as in Lemma \ref{tatefinex}. Then the action of $G$ preserves $B^\circ$. Moreover, if further $A$ is preuniform, then $(B^\circ)^G=A^\circ$. 
\end{lemma}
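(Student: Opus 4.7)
The plan breaks into two parts. To show $G$ preserves $B^\circ$, I would first argue that each $\sigma\in G$ is a continuous ring automorphism of $B$ under its canonical Tate topology. Using \cite[Lemma 2.17]{NS19}, I can build a ring of definition $B_0$ of $B$ starting from a ring of definition $A_0\subset A$, and then replace it with the $G$-stable enlargement $\sum_{\tau\in G}\tau(B_0)$ (still a ring of definition, since $G$ is finite and the pseudouniformizer $\pi\in A_0$ remains a pseudouniformizer). Each $\sigma$ stabilizes this ring of definition, hence is continuous. A continuous ring homomorphism sends bounded sets to bounded sets, so $b\in B^\circ$ gives $\{\sigma(b)^n\}=\sigma(\{b^n\})$ bounded, i.e.\ $\sigma(b)\in B^\circ$.

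For the second assertion, the Galois identity $A=B^G$ immediately gives $(B^\circ)^G=B^\circ\cap A$. The inclusion $A^\circ\subseteq B^\circ\cap A$ follows at once from continuity of $A\hookrightarrow B$. For the nontrivial direction $B^\circ\cap A\subseteq A^\circ$, where preuniformity enters, the plan is to fix a pseudouniformizer $\pi\in A_0$ and to work with a $G$-stable ring of definition $B_0\supset A_0$, taken inside the integral closure of $A_0$ in $B$. Then $B_0\cap A$ consists of elements of $A$ that are integral over $A_0$; such elements are powerbounded in $A$, so $B_0\cap A\subseteq A^\circ$. Given $a\in A\cap B^\circ$, there is some $N$ with $\pi^N a^n\in B_0\cap A\subseteq A^\circ$ for all $n\ge 0$. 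Preuniformity says $A^\circ$ is bounded in $A$, so $\{\pi^N a^n\}_{n\ge 0}$ is bounded in $A$; since $\pi$ is a unit in $A$, multiplication by $\pi^{-N}$ preserves boundedness, and thus $\{a^n\}_{n\ge 0}$ is bounded in $A$, i.e.\ $a\in A^\circ$.

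The main obstacle I anticipate lies in the last step: translating boundedness of $\{a^n\}$ in $B$ into boundedness of the same set in $A$. This is exactly what the preuniform assumption is designed to bridge, by forcing $A^\circ$ itself to be a bounded subring and thereby making powerboundedness of the individual elements $\pi^N a^n$ sufficient to deduce boundedness of the whole family. Without preuniformity, the $A$-topology could in principle be strictly finer than the subspace topology induced from $B$, and one could have $a\in A\cap B^\circ$ with $\{a^n\}$ unbounded in $A$. The remaining ingredients (continuity of the $G$-action, existence of a $G$-stable ring of definition, and powerboundedness of integral elements over $A_0$) are all standard once one invokes \cite[Lemma 2.17]{NS19}.
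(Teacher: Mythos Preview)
Your argument is correct and close to the paper's, with a more self-contained second half. For the first assertion the paper argues directly: since $B_0$ is module-finite over $A_0$ (from \cite[Lemma 2.17]{NS19}), so is $\sigma(B_0)$, whence $t^{l'}\sigma(B_0)\subset B_0$ for some $l'$; combined with $t^l b^n\in B_0$ for all $n$ this gives $t^{l+l'}\sigma(b)^n\in B_0$ for all $n$. Your continuity formulation is equivalent, but there is one small slip: the $A_0$-module $\sum_{\tau\in G}\tau(B_0)$ need not be closed under multiplication. Take instead the $A_0$-subalgebra generated by $\bigcup_\tau\tau(B_0)$, which is still module-finite over $A_0$ (its generators are integral over $A_0$) and hence a bounded open subring. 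Alternatively, a $G$-stable ring of definition is not even needed: boundedness of each $\sigma(B_0)$ already shows $\sigma$ is continuous.

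For the second assertion the paper simply invokes \cite[Corollary 4.8(4)]{NS19} for the equality $A\cap B^\circ=A^\circ$, whereas you reprove it directly. Your argument goes through, with one refinement worth making explicit: since $B_0$ is module-finite over $A_0$, the intersection $B_0\cap A$ lies in the integral closure $(A_0)^+_A$, not merely in $A^\circ$. The preuniform hypothesis in \cite{NS19} is designed exactly so that $(A_0)^+_A$ is bounded (equivalently, a ring of definition), which is the only form of boundedness your final step actually uses; you do not need the stronger statement that all of $A^\circ$ is bounded.
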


\begin{proof}
Let $A_0$ be a ring of definition of $A$ and let  $t\in A_0$ be a pseudouniformizer of $A$. As in the proof of \cite[Lemma 2.20]{NS19}, we can take a ring of definition $B_0$ of $B$ that is finitely generated as an $A_0$-module and satisfies $B=B_0[\frac{1}{t}]$. Pick $b\in B^\circ$ and $\sigma\in G$ arbitrarily. Then there is some $l>0$ such that $t^lb^n\in B_0$ and therefore, $t^l\sigma(b)^n\in \sigma(B_0)$ for every $n>0$. Meanwhile, since $\sigma(B_0)$ is also finitely generated as an $A_0$-module, we have $t^{l'}\sigma(B_0)\subset B_0$ for some $l'>0$. Hence $\sigma(b)$ is also almost integral over $B_0$. Thus, the action of $G$ preserves $B^\circ$. If further $A$ is preuniform, then we have 
$$
(B^\circ)^G=B^G\cap B^\circ=A\cap B^\circ=A^\circ
$$
by Proposition \ref{cor1547205} (2), as wanted. 
\end{proof}

\subsection{Rational functions associated to regular sequences}
We specialize the above results to study the rings of rational functions associated to regular sequences. 
\\

$\bf{Notation}$: Let $A$ be Tate ring, $(A_0, (t))$ a pair of definition of $A$, and  $f,g \in A_{0}$ regular elements. 
Then we define a Tate ring $A(\frac{f}{g})$ as the Tate ring associated to $(A_{0}[\frac{f}{g}], (t))$ (see Definition \ref{TateRingDef} (2)), and also define $A\{\frac{f}{g}\}$ as the separated completion of $A(\frac{f}{g})$.
These Tate rings are independent of the choice of a pair of definition $(A_{0}, (t))$. 
\\

One can clarify the relationship between $A\{\frac{f}{g}\}$ and $\widehat{A}\{\frac{f}{g}\}$.

\begin{lemma}
\label{CompRatTate}
Let $A$ be Tate ring, let $(A_0, (t))$ a pair of definition of $A$, and let $f,g \in A_{0}$ be regular elements. 
Let $\mathcal{A}$ be the separated completion of ${A}$. 
Suppose that $(f,g)$ forms a regular sequence in $A_{0}$. 
Then $\mathcal{A}\{\frac{f}{g}\}$ is the separated completions of $A(\frac{f}{g})$. 
\end{lemma}

\begin{proof}
For any $A_{0}$-algebra $B$, we let $\widehat{B}$ denote the $t$-adic completion of $B$. 
By definition, $A(\frac{f}{g})$ has a pair of definition $(A_{0}[\frac{f}{g}], (t))$. Moreover, the completion $\widehat{A_{0}[\frac{f}{g}]}$ is canonically isomorphic to $\widehat{\widehat{A_{0}}[\frac{f}{g}]}$ 
by Proposition \ref{RegularRatLoc} because $(f,g)$ forms a regular sequence. 
Since $(\widehat{\widehat{A_{0}}[\frac{f}{g}]}, (t))$ is a pair of definition of $\mathcal{A}\{\frac{f}{g}\}$, the assertion follows. 
\end{proof}

Let us inspect topological features of finite \'etale algebras over the rings of rational functions.

\begin{proposition}
\label{RiemannFinEt}
Let $A$ be a Tate ring, and let $(A_{0}, (t))$ be a pair of definition of $A$. Let  $g\in A_{0}$ be a regular element. 
Suppose that $A(\frac{t}{g})$ is preuniform. 
Let $B'$ be a finite \'etale $A[\frac{1}{g}]$-algebra. 
Let $B_{0}$ be the integral closure of $A_{0}$ in $B'$, and let $B$ be the Tate ring associated to $(B_{0},(t))$. 
Let $B'_{t/g}$ be the finite \'etale $A(\frac{t}{g})$-algebra $B'$ equipped with the canonical structure as a Tate ring (cf.\ Lemma \ref{tatefinex}). 
Then the following assertions hold. 
\begin{enumerate}
\item
$B'_{t/g}$ is preuniform. 
\item 
$B'_{t/g}=B(\frac{t}{g})$ as topological rings. 
In particular, $B(\frac{t}{g})$ is preuniform.

\end{enumerate}
\end{proposition}
\begin{proof}
By Proposition \ref{cor1547205} (1), $B'_{t/g}$ is preuniform, and $\big((A_{0}[\frac{t}{g}])^+_{B'}, (t)\big)$ is a pair of definition of $B'_{t/g}$. 
Moreover, $\big(B_0[\frac{t}{g}]\big)^+_{B'}=\big(A_{0}[\frac{t}{g}]\big)^+_{B'}$ because $B_{0}=(A_{0})^{+}_{B'}$. 
Thus we find that $B_{0}[\frac{t}{g}]$ is bounded in $B'_{t/g}$. 
Hence it suffices to show that $B_{0}[\frac{t}{g}]\subset B'_{t/g}$ is open. 
Note that $B'_{t/g}=(A_{0}[\frac{1}{tg}])^{+}_{B'}=(A_{0})^{+}_{B'}[\frac{1}{tg}]=B_{0}[\frac{1}{tg}]$ as rings. 
By Lemma \ref{tatefinex} (2), one can take a ring of definition $B'_{0}$ of $B'_{t/g}$ which is module finite over $A_0[\frac{t}{g}]$. 
Then, since $A_0[\frac{t}{g}]\subset B_{0}[\frac{t}{g}]$ and $(B_{0}[\frac{t}{g}])[\frac{1}{t}]=B'_{t/g}$ as rings, there exists some $l>0$ such that $t^{l}B'_{0}\subset B_{0}[\frac{t}{g}]$. 
Hence $B_{0}[\frac{t}{g}]$ is open in $B'_{t/g}$, as desired. 
\end{proof}

\begin{proposition}\label{rationaluniformity}
Keep the notations and assumptions as in Proposition \ref{RiemannFinEt}. 
Suppose further that $(t,g)$ forms a regular sequence in $A_{0}$ and $B_{0}$. 
Let $\mathcal{A}$ and $\mathcal{B}$ be the separated completions of $A$ and $B$, respectively. 
Then the following assertions hold. 
\begin{enumerate}
\item
$\mathcal{A}\{\frac{t}{g}\}$ and $\mathcal{B}\{\frac{t}{g}\}$ are the separated completions of $A(\frac{t}{g})$ and $B(\frac{t}{g})$, respectively. 
\item
$\mathcal{A}\{\frac{t}{g}\}$ and $\mathcal{B}\{\frac{t}{g}\}$ are uniform. 
\item
Set $\mathcal{B}'_{t/g}:=B'\otimes_{A[\frac{1}{g}]}\mathcal{A}\{\frac{t}{g}\}$, and equip $\mathcal{B}'_{t/g}$ with the canonical structure as a Tate ring that is module-finite over $\mathcal{A}
\{\frac{t}{g}\}$ (cf.\ Lemma \ref{tatefinex}). 
Then we have isomorphisms of topological rings 
\begin{equation}\label{IsomsTateBRat}
\mathcal{B}'_{t/g}\cong \widehat{B'_{t/g}}\cong \widehat{B(\frac{t}{g})}\cong\mathcal{B}\{\frac{t}{g}\}
\end{equation}
(where $\widehat{B'_{t/g}}$ and $\widehat{B(\frac{t}{g})}$ denote the separated completions). In particular, $\mathcal{B}'_{t/g}$ is uniform. 
\end{enumerate}
\end{proposition}
\begin{proof}
The assertion (1) follows from Lemma \ref{CompRatTate}. 
Thus, since $A(\frac{t}{g})$ and $B(\frac{t}{g})$ are preuniform by Proposition \ref{RiemannFinEt}, the assertion (2) follows from \cite[Proposition 2.4 (1)]{NS19}. 
Let us prove (3). 
By the assertion (1) and Proposition \ref{cor1547205} (3), $\mathcal{B}'_{t/g}$ is uniform. 
Moreover, by Corollary \ref{IsomCompPowerBd}, the natural $A[\frac{1}{g}]$-algebra homomorphism 
\begin{equation}\label{naturalAg-hom}
\mathcal{B}'_{t/g}\to \widehat{B'^{\circ}_{t/g}}[\frac{1}{t}]
\end{equation}
is an isomorphism which restricts to an isomorphism of rings ${\mathcal{B}'}^{\circ}_{t/g}\xrightarrow{\cong}\widehat{B'^{\circ}_{t/g}}$. 
Since ${\mathcal{B}'}^{\circ}_{t/g}$ and $B'^{\circ}_{t/g}$ are rings of definitions of ${\mathcal{B}'}_{t/g}$ and $B'_{t/g}$ respectively, (\ref{naturalAg-hom}) gives an isomorphism of topological rings ${\mathcal{B}'}_{t/g}\xrightarrow{\cong} \widehat{B'_{t/g}}$. 
The other isomorphism in (\ref{IsomsTateBRat}) follows from the assertion (1) and Proposition \ref{RiemannFinEt} (2). 
\end{proof}

For example, the assumption of regularity in Proposition \ref{rationaluniformity} is realized in the following situation.

\begin{lemma}
\label{PrepForPerAbhy3}
Keep the notations and assumptions as in Proposition \ref{RiemannFinEt}. 
Suppose that $A_{0}$ is a normal ring, and there exists a ring map $f: R\to A_0$ with the following properties. 
\begin{enumerate}
\item
$R$ is a Noetherian N-2 normal domain.\footnote{A domain $R$ is said to be N-2 if for any finite extension of fields $\textnormal{Frac}(R)\subset L$, the ring extension $R\subset R^{+}_{L}$ is module-finite.}
\item
There exist $t_0\in f^{-1}(t)$ and $g_{0}\in f^{-1}(g)$ such that the height of the ideal $(t_{0},g_{0})\subset R$ is $2$. 
\item
$f$ is integral. 
\item
$f(R\setminus \{0\})$ consists of regular elements of $A_{0}$. 
\end{enumerate}
Then, $(t, g)$ forms a regular sequence on $A_{0}$ and $B_{0}$. 
\end{lemma}

To prove Lemma \ref{PrepForPerAbhy3}, we need a more fundamental lemma.

\begin{lemma}
\label{normalext}
Let $A$ be a normal ring that is torsion free and integral over a Noetherian N-2 normal domain $R$. Assume that $(x,y)$ is an ideal of $R$ such that the height of $(x,y)$ is $2$. Then $x,y$ forms a regular sequence on $A$.
\end{lemma}

\begin{proof}
If $A$ is the zero ring, then any sequence in $A$ forms a regular sequence. Thus we assume that $A$ is not the zero ring below. Notice that then the map $R\to A$ is injective because it is torsion free and $R$ is a domain. 

Since $R$ is a Noetherian normal domain, every associated prime of the quotient ring $R/(x)$ is minimal by Serre's normality criterion \cite[Theorem 4.5.3]{SH06}. Thus, $\overline{y} \in R/(x)$ is a regular element by the assumption that the height of $(x,y)$ is $2$.\footnote{The normality of $R$ is necessary. See \cite[Example 2.2.6]{SH06} for an example constructed by Nagata.} That is, $x,y$ is a regular sequence on $R$. Let $\Frac(R)$ be the field of fractions of $R$. Then since $R \to A$ is integral and torsion free, it follows that $\Frac(R) \otimes_R A$ is the total ring of fractions of $A$. Let $\Frac(R) \to B$ be a finite-dimensional subextension of $\Frac(R) \otimes_R A$ and denote by $R^+_B$ be the integral closure of $R$ in $B$. Since $B$ is a reduced finite $\Frac(R)$-algebra, it is a finite product of fields. Hence, $R \to R^+_B$ is module-finite and $R^+_B$ is a Noetherian normal ring because $R$ is N-2. We can write $R^+_B= \prod_{i=1}^m R_i$, where $R \to R_i$ is module-finite and $R_i$ is a normal domain. In this case, we see that the height of $(x,y)R_i$ is $2$ for each $i$ and thus, $x, y$ is a regular sequence on $R^+_B$. Since $A$ is a normal ring, it can be written as a colimit of such $R^+_B$. So we conclude that $x,y$ is regular on $A$.
\end{proof}

Now we can deduce Lemma \ref{PrepForPerAbhy3} easily.

\begin{proof}[Proof of Lemma \ref{PrepForPerAbhy3}]
Since $A_{0}$ is normal, so is $A_{0}[\frac{1}{tg}]$ ($=A[\frac{1}{g}]$). 
Hence the \'etale $A[\frac{1}{g}]$-algebra $B'$ is normal, which implies that $B_{0}$ is normal because it is integrally closed in $B'$. 
Since the map $A[\frac{1}{g}]\to B'$ is flat, it sends any regular element of $A_{0}$ to a regular element of $B_{0}$. 
Thus, since $f(R\setminus \{0\})\subset A_{0}$ consists of regular elements, the composite map 
\begin{equation}\label{compRAB}
R\xrightarrow{f} A_{0}\to B_{0}
\end{equation}
defines the structure as a torsion free integral $R$-algebra on $B_{0}$. 
Therefore, we can apply Lemma \ref{normalext} to deduce the assertion. 
\end{proof}

\section{A variant of perfectoid Abhyankar's lemma for almost Witt-perfect rings}
\label{SecPAbhy}

Let $p>0$ be a prime number. For the sake of reader's convenience, we recall the definition of \textit{Witt-perfect rings} due to Davis and Kedlaya; see \cite{DK14} and \cite{DK15}.

\begin{definition}[Witt-perfect ring]
For a prime number $p>0$, we say that a ring $A$ is \textit{p-Witt-perfect} (simply \textit{Witt-perfect}), if the Witt-Frobenius map $ \mathbf{F}:\mathbf{W}_{p^n}(A) \to \mathbf{W}_{p^{n-1}}(A)$ is surjective for all $n \ge 2$. 
\end{definition}

The most part of this article will deal with $p$-torsion free Witt-perfect rings. Let us recall the almost version of the Witt-perfect condition as introduced in \cite{NS19}.

\begin{definition}[Almost Witt-perfect ring]
\label{AlmostWittRing}
Let $A$ be a $p$-torsion free ring with an element $\varpi \in A$ admitting a compatible system of $p$-power roots $\varpi^{\frac{1}{p^n}} \in A$. Then we say that $A$ is \textit{$(\varpi)^{\frac{1}{p^\infty}}$-almost Witt-perfect}, if the following conditions are satisfied. 
\begin{enumerate}
\item
The Frobenius endomorphism on $A/(p)$ is $(\varpi)^{\frac{1}{p^\infty}}$-almost surjective.
\item
For every $a\in A$ and every $n>0$, there is an element $b \in A$ such that $b^p \equiv p\varpi^\frac{1}{p^n}a \pmod{p^2}$. 
\end{enumerate}
\end{definition}

For applications, we often consider the case that $\varpi \in A$ is a regular element and $A$ is (completely) integrally closed in $A[\frac{1}{p}]$.\footnote{Without the integral closed (or more generally, $p$-root closed) condition, the kernel of the Frobenius map on $A/(p)$ can be complicated.} If one takes $\varpi=1$, then it is shown that $(\varpi)^{\frac{1}{p^\infty}}$-almost Witt perfectness coincides with the Witt-perfectness; see \cite{NS19} for details. Let us recall the following fact; see \cite[Proposition 3.20]{NS19}.

\begin{proposition}
\label{PerfWitt}
Let $V$ be a $p$-adically separated $p$-torsion free valuation ring and let $A$ be a $p$-torsion free $V[T^\frac{1}{p^\infty}]$-algebra. Set $\varpi^\frac{1}{p^n}:=T^\frac{1}{p^n}\cdot 1\in A$ for every $n\geq 0$ and denote by $\widehat{V}$ and $\widehat{A}$ the $p$-adic completions of $V$ and $A$, respectively. Then the following conditions are equivalent. 

\begin{itemize}
\item[$(a)$]
$V$ is a Witt-perfect valuation ring of rank $1$ and $A$ is $(\varpi)^\frac{1}{p^\infty}$-almost Witt-perfect and integrally closed (resp.\ completely integrally closed) in $A[\frac{1}{p}]$. 
\item[$(b)$]
There exist a perfectoid field $K$ and a $(\varpi)^{\frac{1}{p^\infty}}$-almost perfectoid $K\langle T^\frac{1}{p^\infty}\rangle$-algebra $\mathcal{A}$ with the following properties: 
\begin{itemize}
\item[$\bullet$]
$K$ is a Banach ring associated to $(\widehat{V}, (p))$, the norm on $K$ is multiplicative and $K^\circ=\widehat{V}$;
\item[$\bullet$]
$\mathcal{A}$ is a Banach ring associated to $(\widehat{A},(p))$ and $\widehat{A}$ is open and integrally closed in $\mathcal{A}$ (resp.\ $\mathcal{A}^\circ=\widehat{A}$);
\item[$\bullet$]
the bounded ring map of Banach rings $K\langle T^\frac{1}{p^\infty}\rangle\to \mathcal{A}$ is induced by the ring map $V[T^\frac{1}{p^\infty}]\to A$.  
\end{itemize}
\end{itemize}
\end{proposition}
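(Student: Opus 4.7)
The overall strategy is to show that the two conditions are linked by $p$-adic completion: set $K := \widehat{V}[\frac{1}{p}]$ and $\mathcal{A} := \widehat{A}[\frac{1}{p}]$, each equipped with the Banach (Tate) structure whose ring of definition is $\widehat{V}$ (resp.\ $\widehat{A}$) and whose pseudouniformizer is $p$. The bounded map $K\langle T^{\frac{1}{p^\infty}}\rangle\to \mathcal{A}$ is the continuous extension of $V[T^{\frac{1}{p^\infty}}]\to A\to\widehat{A}$ after inverting $p$. With these candidates in place, the proposition reduces to matching, axiom by axiom, the Witt-perfect data on $(V,A)$ with the (almost) perfectoid data on $(K,\mathcal{A})$.

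For $(a)\Rightarrow(b)$, I would first invoke the theorem of Davis--Kedlaya \cite{DK14,DK15}: a $p$-torsion free, $p$-adically separated valuation ring $V$ is Witt-perfect of rank $1$ if and only if $\widehat{V}$ is a $p$-adically complete rank-$1$ valuation domain whose Frobenius modulo $p$ is surjective. This makes $K$ a perfectoid field whose norm is the multiplicative rank-$1$ valuation, and $K^\circ=\widehat{V}$ by construction. For $A$, the hypothesis that $A$ is integrally closed (resp.\ completely integrally closed) in $A[\frac{1}{p}]$ passes to $\widehat{A}$ inside $\widehat{A}[\frac{1}{p}]$ by Lemma~\ref{p-adicnormal}, which forces $\widehat{A}$ to be open and bounded in $\mathcal{A}$, and in the ``completely integrally closed'' variant yields the clean equality $\widehat{A}=\mathcal{A}^\circ$. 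The $(\pi)^{\frac{1}{p^\infty}}$-almost surjectivity of Frobenius on $A/(p)$ transfers verbatim to $\widehat{A}/(p)$ via the isomorphism $A/(p)\xrightarrow{\cong}\widehat{A}/(p)$, which in turn propagates to $\mathcal{A}^\circ/(p)$ through $\widehat{A}\hookrightarrow\mathcal{A}^\circ$.

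The reverse direction $(b)\Rightarrow(a)$ runs the dictionary backwards. From $K^\circ=\widehat{V}$ being a $p$-adically complete rank-$1$ valuation domain with multiplicative norm whose Frobenius modulo $p$ is surjective, Davis--Kedlaya again implies that $V$ is Witt-perfect of rank $1$, since $V\hookrightarrow\widehat{V}$ is injective with $V/(p^n)\cong\widehat{V}/(p^n)$ for every $n$. The almost-perfectoid hypothesis yields $(\pi)^{\frac{1}{p^\infty}}$-almost surjective Frobenius on $\mathcal{A}^\circ/(p)$, which descends to $\widehat{A}/(p)$ through the integral inclusion $\widehat{A}\hookrightarrow\mathcal{A}^\circ$ (or the equality in the stronger variant), and then to $A/(p)$ via $A/(p)\cong\widehat{A}/(p)$; this is the first axiom of $(\pi)^{\frac{1}{p^\infty}}$-almost Witt-perfectness. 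The second axiom, the almost solvability of $b^p\equiv p\pi^{\frac{1}{p^n}}a\pmod{p^2}$, I would deduce from the almost Frobenius lift on $\mathcal{A}^\circ$ at the level of $p^2$-truncated Witt vectors, pushing the solution into $\widehat{A}$ and descending via $A/(p^2)\cong\widehat{A}/(p^2)$. Finally, (complete) integral closedness of $A$ in $A[\frac{1}{p}]$ is recovered from that of $\widehat{A}$ in $\widehat{A}[\frac{1}{p}]$ by combining Lemma~\ref{Beauville-Laszlo} with Lemma~\ref{p-adicnormal}.

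The main obstacle I anticipate is the precise relationship between $\mathcal{A}^\circ$ and $\widehat{A}$ in the two parallel formulations. In the completely integrally closed variant one gets the clean identification $\widehat{A}=\mathcal{A}^\circ$, so the almost-perfectoid condition and the almost-Witt-perfect condition become literally the same statement after $p$-adic completion. In the weaker ``integrally closed'' variant one only has an integral, necessarily $(\pi)^{\frac{1}{p^\infty}}$-almost isomorphic inclusion $\widehat{A}\hookrightarrow\mathcal{A}^\circ$ (where $\mathcal{A}^\circ$ is the complete integral closure of $\widehat{A}$ in $\mathcal{A}$), and verifying that almost surjectivity of Frobenius is insensitive to this distinction is the technical crux. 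I expect this to amount to a careful manipulation of the functor $(-)_\ast$ applied to $\mathcal{A}^\circ/(p)$, of the sort developed in \cite{NS19}, together with the presentation of almost elements given by Lemma~\ref{almostelement}.
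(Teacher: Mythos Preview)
The paper does not prove this proposition at all: it is introduced with the phrase ``Let us recall the following fact; see \cite[Proposition 3.20]{NS19}'' and no argument is given. So there is no proof here to compare your proposal against; the result is imported wholesale from the companion paper \cite{NS19}.

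That said, your outline is broadly the right shape for how such a result is established: the equivalence is mediated by $p$-adic completion, Davis--Kedlaya handles the valuation ring, the isomorphism $A/(p^m)\cong\widehat{A}/(p^m)$ transports the Frobenius conditions, and Lemmas \ref{Beauville-Laszlo} and \ref{p-adicnormal} control (complete) integral closedness under completion. Two points deserve more care than your sketch gives them. First, in the merely integrally closed variant you assert that $\widehat{A}\hookrightarrow\mathcal{A}^\circ$ is ``necessarily $(\pi)^{\frac{1}{p^\infty}}$-almost isomorphic''; this is not automatic and is exactly the kind of statement that requires the preuniformity/boundedness machinery developed in \cite{NS19} rather than just Lemma \ref{almostelement}. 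Second, your treatment of the second axiom in Definition \ref{AlmostWittRing} (solvability of $b^p\equiv p\pi^{\frac{1}{p^n}}a\pmod{p^2}$) via ``the almost Frobenius lift on $\mathcal{A}^\circ$ at the level of $p^2$-truncated Witt vectors'' is a placeholder, not an argument; making this precise is genuinely where the work lies, and is presumably why the authors defer to \cite{NS19} rather than reproving it here.
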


\begin{remark}
\begin{enumerate}
\item
In Proposition \ref{PerfWitt}, one is allowed to map $T$ to $pg \in A$, in which case $A$ is a $(pg)^{\frac{1}{p^\infty}}$-almost Witt-perfect ring for some $g \in A$. We will consider almost Witt-perfect rings of this type.

\item
The advantage of working with (almost) Witt-perfect rings is in the fact that one need not impose $p$-adic completeness condition on a ring. Let $A:=W(k)[[x_2,\ldots,x_d]]$ be the power series algebra over the ring of Witt vectors of a perfect field $k$ of characteristic $p>0$. Then 
$$
A_{\infty}:=\bigcup_{n>0} W(k)[p^{\frac{1}{p^n}}][[x_2^{\frac{1}{p^n}},\ldots,x_d^{\frac{1}{p^n}}]]
$$
is a Witt-perfect algebra that is an integrally closed domain and integral, faithfully flat over $A$. The $p$-adic completion $\widehat{A_\infty}$ of $A_{\infty}$ is integral perfectoid. While $A \to \widehat{A_\infty}$ remains flat by \cite[Theorem 1.1]{Ye18}, it is not integral. The ring $A_\infty$ will be used essentially in the construction of almost Cohen-Macaulay algebras later. A similar construction for complete ramified regular local rings appears in \cite[Proposition 4.9]{Sh16}.

Another important example of a Witt-perfect ring is given by an arbitrary absolutely integrally closed domain $A$, where $A$ is a faithfully flat $\mathbb{Z}_p$-algebra. The $p$-adic completion $\widehat{A}$ is an integral perfectoid algebra over $\widehat{\mathbb{Z}_p[p^{\frac{1}{p^\infty}}]}$. Indeed, as $A$ is absolutely integrally closed in its field of fractions, it contains $\mathbb{Z}_p^+$. Hence $\widehat{A}$ is a $\widehat{\mathbb{Z}_p^+}$-algebra.
\end{enumerate}
\end{remark}

\subsection{Variants of Riemann's extension theorems}
In the context of commutative ring theory, \textit{Riemann's extension theorem} often means a kind of theorem that gives a satisfactory answer to Problem \ref{RiemannExtProblem} in \S \ref{subsecARS}. 
Such a theorem for perfectoid algebras is a key result to the proof of the Direct Summand Conjecture and its derived variant; see \cite{An2} and \cite{Bh18}. 
In this subsection, we establish two types of decompleted variant of the perfectoid Riemann's extension theorem; see Theorem \ref{RiemannExtadic} and Theorem \ref{RiemannExt}.

\subsubsection{Zariskian Riemann's extension theorem}

We start with recalling the definition of adically Zariskian rings.

\begin{definition}
Let $A$ be ring with an ideal $I \subset A$. Then we say that $A$ is \textit{I-adically Zariskian} if $I$ is contained in every maximal ideal of $A$. 
\end{definition}

Note that Zariskianness is preserved under integral ring maps by \cite[\S 9, Lemma 2]{M86}. In other words, for an $I$-adically Zariskian ring $A$, any integral $A$-algebra $B$ is $IB$-adically Zariskian.

We then introduce an important notion using semivaluations (cf.\ Definition \ref{semivaluation1}). 
In the following definition, for a semivaluation $|\cdot|$, we denote by $V_{|\cdot|}$ the valuation ring described in Definition \ref{semivaluation2}. 

\begin{definition}
Let $D \subset C$ be a ring extension and let us set
$$
\Val(C,D):=\Big\{|\cdot|~\Big|~|\cdot|~\mbox{is a semivaluation on}~C~\mbox{such that}~|D| \le 1~\mbox{and}~V_{|\cdot|}~\mbox{has dimension}~\le 1\Big\}\Big/ \sim,
$$
where $\sim$ is generated by natural equivalence classes of semivaluations.
\end{definition}

Let us prove the following algebraic result.

\begin{proposition}
\label{Cont1}
Let $(C,D)$ be a pair of rings such that $C$ is the localization of $D$ with respect to some multiplicative set consisting of regular elements. Suppose that $D$ is an integral extension of a Noetherian ring $R$. Fix a (possibly empty) subset $\mathcal{S} \subset D$ that consists of only regular elements. Then one has
$$
D^+_C=\Big\{x \in C~\Big|~|x| \le 1~\mbox{for any}~|\cdot| \in \Val(C,D)~\mbox{such that}~|g| \ne 0~\mbox{for every}~g \in \mathcal{S}\Big\},
$$
where $D^+_C$ is the integral closure of $D$ in $C$. In particular, $D^+_C$ does not depend on the choice of $\mathcal{S}$.
\end{proposition}

\begin{proof}
Since the containment $\subset$ is clear by the definition of $\Val(C,D)$, let us prove the reverse containment $\supset$. Let $y \in C$ be such that $|y| \le 1$, where $|\cdot| \in \Val(C,D)$ satisfies $|g| \ne 0$ for every $g \in \mathcal{S}$. Let $D[\frac{1}{y}]$ be the subring of the localization $C[\frac{1}{y}]$ which is generated by $y^{-1}=\frac{1}{y}$ over $D$.\footnote{Notice that $D[\frac{1}{y}]$ is \textit{not} the localization of $D$ with respect to the multiplicative system $\{y^n\}_{n \ge 0}$.} Consider the ring extension $D[\frac{1}{y}] \subset C[\frac{1}{y}]$. First suppose that $y^{-1}$ is a unit in $D[\frac{1}{y}]$. Then we can write
$$
y=\frac{a_0}{y^{n-1}}+\frac{a_1}{y^{n-2}}+\cdots+a_{n-1}
$$
for $a_i \in D$. Then we have $y^n-a_{n-1}y^{n-1}-\cdots-a_0=0$. Hence $y \in C$ is integral over $D$ and $y \in D^+_C$.

To derive a contradiction, suppose that $y^{-1} \in D[\frac{1}{y}]$ is not a unit. 
We may assume that $y$ is not nilpotent. Choose a prime ideal $\fm \subset D[\frac{1}{y}]$ such that $y^{-1} \in \fm$. Let $\fp \subset D[\frac{1}{y}]$ be a minimal prime ideal satisfying $\fp \subset \fm$. On the other hand, $R[\frac{1}{y}] \subset D[\frac{1}{y}]$ is an integral extension and $R[\frac{1}{y}]$ is Noetherian by Hilbert's Basis Theorem. 

Then, one can find a valuation ring $D[\frac{1}{y}]/\fp \subset V \subset \Frac(D[\frac{1}{y}]/\fp)$ such that the center (the maximal ideal) of $V$ contains $y^{-1}$ and the Krull dimension of $V$ is $1$: More concretely, one can construct $V$ in the following way. Let $\fn:=\fm \cap R[\frac{1}{y}]$ and $\fq:=\fp \cap R[\frac{1}{y}]$. Then we have a Noetherian subdomain $R[\frac{1}{y}]/\fq \subset \Frac(R[\frac{1}{y}]/\fq)$. By \cite[Theorem 6.3.2 and Theorem 6.3.3]{SH06}, there is a Noetherian valuation ring $V_{\fn}$ such that $R[\frac{1}{y}]/\fq \subset V_{\fn} \subset \Frac(R[\frac{1}{y}]/\fq)$ and the center of $V_{\fn}$ contains $\overline{\fn} \subset R[\frac{1}{y}]/\fq$. We have the commutative diagram:
$$
\begin{CD}
\Frac(R[\frac{1}{y}]/\fq) @>>> \Frac(D[\frac{1}{y}]/\fp) \\
@AAA @AAA \\
V_{\fn} @>>> V \\
\end{CD}
$$
where $V$ is defined as the localization of the integral closure of $V_{\fn}$ in $\Frac(D[\frac{1}{y}]/\fp)$ (this integral closure is a so-called \textit{Pr\"ufer domain}) at the maximal ideal containing $\overline{\fm}$. So $V$ is a valuation ring of Krull dimension $1$ and we have the composite map $D \to D[\frac{1}{y}] \to V$. Let $|\cdot|_V$ denote the corresponding valuation. Moreover, $\mathcal{S} \subset D$ consists of regular elements and $C[\frac{1}{y}]$ is the localization of $D$, so the image of elements in $\mathcal{S}$ remains regular elements in $C[\frac{1}{y}]$ and thus in the subring $D[\frac{1}{y}]$. As $\fp$ is a minimal prime ideal of $D[\frac{1}{y}]$, $g \notin \fp$  for every $g \in \mathcal{S}$. So we find that $|g| _V\ne 0$ and in particular, this implies that $D \to D[\frac{1}{y}] \to V$ extends to the map $C \to C[\frac{1}{y}] \to \Frac(V)$ and the semivaluation on $(C,D)$ induced by $|\cdot|_V$ gives a point $|\cdot|_C \in \Val(C,D)$.

By our assumption, we have $|y|_C \le 1$. Since $y^{-1} \in V$ is in the center, we know $|y^{-1}|_C< 1$. However, these facts are not compatible with $|y|_C|y^{-1}|_C=|yy^{-1}|_C=1$ and thus, $y^{-1} \in D[\frac{1}{y}]$ must be a unit, as desired.
\end{proof}

The above proposition has the following implication: Keep in mind that $A^+$ stands for an open integrally closed subring in a Tate ring $A$.

\begin{corollary}
\label{Cont2}
Let $(A,A^+)$ be an affinoid Tate ring with a fixed pseudouniformizer $t \in A^+$ such that $A^+$ is $t$-adically Zariskian and $A^+$ is integral over a Noetherian ring. For a regular element $g \in A^+$, let us set $(C,D):=(A[\frac{1}{g}],A^+)$. Then we have
\begin{equation}
\label{integralequal}
D^+_C=\Big\{x \in C~\Big|~|x| \le 1;~\forall~|\cdot| \in \Val(C,D)~\mbox{such that}~|t|<1\Big\}.
\end{equation}
Finally, let $\Val(C,D)_{|t|<1}$ be the set of all elements $|\cdot| \in \Val(C,D)$ for which $0<|t|<1$. Then the natural map $(A,A^+) \to (C,D)$ induces an injection  $\Val(C,D)_{|t|<1} \hookrightarrow \Spa(A,A^+)$.
\end{corollary}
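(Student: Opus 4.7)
The plan is to combine Lemma~\ref{Cont1} with the $t$-adically Zariskian hypothesis on $A^+$. Since $g$ is a unit in $C$, Lemma~\ref{Cont1} applied with $\mathcal{S}=\emptyset$ already gives $D^+_C=\{x\in C:|x|\leq 1\ \text{for all}\ |\cdot|\in \Val(C,D)\}$, which yields the inclusion $\subset$ in \eqref{integralequal} for free (imposing $|t|<1$ intersects over a smaller family of semivaluations, hence yields a larger set). The task in part (1) is the reverse inclusion: given $y\in C\setminus D^+_C$, I will construct $|\cdot|\in\Val(C,D)$ with both $|t|<1$ and $|y|>1$, i.e., refine the valuation produced in Lemma~\ref{Cont1} so that $t$ lands in the center of the associated valuation ring.

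The core step, and what I expect to be the main obstacle, is to show that $(t,y^{-1})$ is a proper ideal of $A^+[\frac{1}{y}]=D[\frac{1}{y}]$. Suppose instead that $1=ts+y^{-1}r$ with $s=\sum_{i\geq 0}a_iy^{-i}$ and $r=\sum_{j\geq 0}b_jy^{-j}$ (finite sums, $a_i,b_j\in A^+$). Collecting terms in powers of $y^{-1}$, multiplying through by $y^N$ for $N$ large enough to clear negative powers, and then absorbing the kernel of $C\to C[\frac{1}{y}]$ by an additional factor $y^M$, I obtain an equation in $C$ of the form
\[
(1-ta_0)\,y^{N+M}=P(y),\qquad P(y)\in A^+[y],\quad \deg P<N+M.
\]
Since $A^+$ is $t$-adically Zariskian, $t$ lies in every maximal ideal, so $1-ta_0$ is a unit of $A^+$. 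Multiplying by its inverse yields a monic integral equation for $y$ over $A^+$, contradicting $y\notin D^+_C$.

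Once $(t,y^{-1})$ is known to be proper, I choose a maximal ideal $\fm\subset A^+[\frac{1}{y}]$ containing both $t$ and $y^{-1}$, and then run verbatim the construction in the proof of Lemma~\ref{Cont1} with this specific $\fm$: pick a minimal prime $\fp\subset \fm$, descend to the Noetherian subring $R[\frac{1}{y}]$ (Hilbert basis theorem), build a Noetherian valuation ring of $\Frac(R[\frac{1}{y}]/(\fp\cap R[\frac{1}{y}]))$ centered at the image of $\fm\cap R[\frac{1}{y}]$, and extend it to a valuation ring $V\subset \Frac(A^+[\frac{1}{y}]/\fp)$ of Krull dimension one whose center contains $\overline{\fm}$. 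Because $t,\,y^{-1}\in\fm$ both lie in the center of $V$, the induced semivaluation $|\cdot|\in\Val(C,D)$ satisfies $|t|<1$ and $|y|>1$, proving equality in \eqref{integralequal}.

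For the final assertion, the restriction map $\Val(C,D)_{|t|<1}\to\Spa(A,A^+)$ is well defined: $|A^+|\leq 1$ is built into the definition of $\Val(C,D)$, while continuity on the Tate ring $(A,A^+)$ follows from $|t|<1$ together with the fact that a Krull-dimension-one valuation ring has archimedean value group, which forces $|t^n|\to 0$. Injectivity is immediate: for any $|\cdot|\in\Val(C,D)$ the support $\fp\subset C$ cannot contain the unit $g$, so $\fp$ is determined by $\fp\cap A$, and the valuation on $k(\fp)=\Frac(A/(A\cap\fp))$ is determined by its restriction to $A/(A\cap\fp)$, completing the argument.
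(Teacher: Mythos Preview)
Your proof is correct and follows the same overall route as the paper: show that for $y\notin D^+_C$ one can find a maximal ideal $\fm\subset A^+[\frac{1}{y}]$ containing both $t$ and $y^{-1}$, and then run the rank-one valuation construction from Lemma~\ref{Cont1} with this $\fm$. The only substantive difference is in how the existence of such an $\fm$ is argued. You show directly that $(t,y^{-1})$ is a proper ideal: if $1=ts+y^{-1}r$ in $A^+[\frac{1}{y}]$, clearing denominators yields $(1-ta_0)y^{N+M}=P(y)$ with $\deg P<N+M$ and coefficients in $A^+$, and the Zariskian hypothesis makes $1-ta_0$ a unit, forcing $y$ integral. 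The paper instead picks any maximal ideal $\fm$ containing $y^{-1}$ (whose existence is the content of Lemma~\ref{Cont1}) and observes that, since $y^{-1}\in\fm$, the map $A^+\to A^+[\frac{1}{y}]/\fm$ is surjective; hence $\fm\cap A^+$ is a maximal ideal of $A^+$ and must contain $t$ by the Zariskian hypothesis. Your argument is a bit more computational but has the pleasant feature of handling the case ``$y^{-1}$ is a unit'' uniformly, while the paper's is shorter and more structural. For the injection $\Val(C,D)_{|t|<1}\hookrightarrow\Spa(A,A^+)$, your reasoning matches the paper's (which simply cites \cite[Proposition~7.3.7]{Bh17} for continuity).
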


\begin{proof}
Keep the notation as in the proof of Proposition \ref{Cont1}. The point is that one can choose the valuation domain $V$ so as to satisfy the required property. So assume that $y \in A[\frac{1}{g}]$ satisfies $|y| \le 1$ for all $|\cdot| \in \Val(A[\frac{1}{g}],A^+)$ and $y^{-1} \in A^+[\frac{1}{y}]$ is not a unit. Then we can find a maximal ideal $\fm \subset A^+[\frac{1}{y}]$ such that $y^{-1} \in \fm$, which gives the surjection $A^+ \twoheadrightarrow A^+[\frac{1}{y}]/\fm$ and let $\fn \subset A^+$ be its kernel. Then $\fn$ is a maximal ideal of $A^+$. The element $t \in A^+$ is in the Jacobson radical by assumption, so we have $t \in \fn$. There is a chain of prime ideals $\fp \subset \fm \subset A^+[\frac{1}{y}]$ such that $\fp$ is minimal and $t,y^{-1} \in \fm$. Then, we have the associated valuation ring $(V,|\cdot|_V)$ and the map $A^+[\frac{1}{y}]/\fp \hookrightarrow V$. It follows from the above construction that $|t|_V<1$, establishing $(\ref{integralequal})$. As $t$ maps into the maximal ideal of the rank $1$ valuation ring $V$, it follows from \cite[Proposition 7.3.7]{Bh17} that $|\cdot|_V$ pulled back to $A^+$ gives a point of $\Spa(A,A^+)$. Finally, the injectivity of the claimed map is clear from the construction.
\end{proof}

Corollary \ref{Cont2} can be formulated also in terms of adic geometry as in Corollary \ref{Cont3} below, where the notion of 
\textit{maximal separated quotient} plays an essential role. 
For generalities on topological spaces and maximal separated quotients, we refer the reader to \cite[Chapter 0, 2.3(c)]{FK18} and \cite[Definition 2.4.8]{KL15}. See also \cite{Mun14} for the construction and properties of maximal separated quotients in general topology. A detailed construction of the maximal separated quotient of an adic space via rank $1$ valuations is found in \cite[Proposition 7.4.13]{Bh17}.

\begin{corollary}
\label{Cont3}
Let $(A, A^+)$ be an affinoid Tate ring and let $(A_0, (t))$ be a pair of definition of $A$. 
Let $s\in A_0$ be an element such that $t\in sA_0$. Let $X=\textnormal{Spa}(A, A^+)$ and let $U$ be the subspace of $X$:
$$
U:=\Big\{x\in X~\Big|~|s|_{\widetilde{x}}< 1\ ~\mbox{for the maximal generization}~\widetilde{x}~\mbox{of}~x\Big\}.
$$
Suppose that $A_0$ is $s$-adically Zariskian and integral over a Noetherian ring. 
Then we have
$$
A^+=A^\circ=(A_0)^+_A=\Big\{a \in A~\Big|~|a|_x\leq 1~\mbox{for any}~x \in [U]\Big\}, 
$$
where $[U]$ denotes the maximal separated quotient of $U$. 
\end{corollary}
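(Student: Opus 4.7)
My plan is to prove the fourfold equality by establishing the cyclic chain of inclusions
\[
(A_0)^+_A \subset A^+ \subset A^\circ \subset \bigl\{a \in A : |a|_x \le 1\ \forall\, x \in [U]\bigr\} \subset (A_0)^+_A.
\]
The first inclusion is immediate since $A^+$ is an integrally closed subring of $A$ containing the ring of definition $A_0$. The second holds by the definition of an affinoid Tate pair (elements of $A^+$ are powerbounded). For the third, any $a \in A^\circ$ satisfies $t^N a^n \in A_0$ for some $N$ and all $n \ge 0$; hence for a rank-$1$ continuous valuation $|\cdot|_x$ representing a point of $[U]$, we have $|a|_x^n \le |t|_x^{-N}$ with $|t|_x < 1$, forcing $|a|_x \le 1$ by the archimedean property of rank-$1$ value groups.

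The essential step is the last inclusion, for which I would adapt the proof of Corollary \ref{Cont2} with $s$ playing the role of $t$ there. Given $y \in A$ with $|y|_x \le 1$ for every $x \in [U]$, suppose for contradiction that $y \notin (A_0)^+_A$. By Lemma \ref{Cont1} applied to $(C,D) = (A, A_0)$ with $\mathcal{S} = \{s\}$, there exists a semivaluation $|\cdot|_V \in \Val(A, A_0)$ with $|s|_V \ne 0$ and $|y|_V > 1$. Retracing the construction in the proof of Corollary \ref{Cont2}, one sees that $y^{-1} \in A_0[\tfrac{1}{y}]$ is not a unit; pick a maximal ideal $\mathfrak{m}$ containing $y^{-1}$ and set $\mathfrak{n} := \mathfrak{m} \cap A_0$, which is a maximal ideal of $A_0$. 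The $s$-adic Zariskian hypothesis on $A_0$ forces $s \in \mathfrak{n} \subset \mathfrak{m}$. Taking a minimal prime of $A_0[\tfrac{1}{y}]$ contained in $\mathfrak{m}$ and invoking the Noetherian valuation theory of \cite[Theorems 6.3.2 and 6.3.3]{SH06}, we construct a rank-$1$ valuation ring $V$ with a map $A_0 \to V$ such that both $s$ and $y^{-1}$ lie in the maximal ideal of $V$; in particular $|s|_V < 1$ and $|y|_V > 1$.

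The crux is to verify that the resulting semivaluation defines a point of $[U] \subset X$. Since $t \in sA_0$, we get $|t|_V \le |s|_V < 1$, so $|\cdot|_V$ is a rank-$1$ continuous semivaluation on the Tate ring $A$ (it extends from $A_0$ to $A = A_0[\tfrac{1}{t}]$ because $t$ is not in the support of $|\cdot|_V$). For any $b \in A^+ \subset A^\circ$, the estimate $t^N b^n \in A_0$ together with the rank-$1$ archimedean argument already used yields $|b|_V \le 1$; consequently $|A^+|_V \le 1$ and $|\cdot|_V \in X = \Spa(A, A^+)$. Combined with $|s|_V < 1$, this places $|\cdot|_V$ in $U$, and being rank-$1$ it represents a point of the maximal separated quotient $[U]$. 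The inequality $|y|_V > 1$ then contradicts the hypothesis on $y$. The main obstacle, and the point where the specific hypotheses do real work, is precisely this verification: one must simultaneously ensure $|s|_V < 1$ (which uses the $s$-adic rather than $t$-adic Zariskian assumption) and $|A^+|_V \le 1$ (promoting boundedness over $A_0$ to boundedness over the potentially larger ring $A^+$ via the powerboundedness of $A^+$ and a rank-$1$ archimedean argument).
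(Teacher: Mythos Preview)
Your proof is correct. Both you and the paper establish the same chain of easy inclusions and focus on showing that $|y|_x \le 1$ for all $x \in [U]$ forces $y \in (A_0)^+_A$, but the arguments diverge at this point.

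The paper factors $t = sg$, introduces the auxiliary Tate ring $B$ associated to $(A_0,(s))$, and applies Corollary~\ref{Cont2} \emph{as a black box} to the pair $(B[\frac{1}{g}], B^+)$ with pseudouniformizer $s$; it then constructs a bijection $\Val(B[\tfrac{1}{g}], B^+)_{|s|<1} \cong [U]$ to transport the conclusion back. Your approach instead reopens the valuative construction of Lemma~\ref{Cont1} and Corollary~\ref{Cont2} and runs it directly on $(A, A_0)$, using the $s$-adic Zariskian hypothesis to force $s$ (and hence $t \in sA_0$) into the center of the rank-$1$ valuation $V$, and then verifying by the archimedean argument that $|A^+|_V \le 1$ so that $|\cdot|_V$ lands in $[U] \subset \Spa(A, A^+)$. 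This is more hands-on but avoids introducing the auxiliary Tate structure and the somewhat formal matching of $\Val$-sets with $[U]$. The paper's route is more modular (Cont2 is invoked off the shelf), while yours makes the role of the $s$-Zariskian hypothesis and the rank-$1$ condition more transparent. One small point worth making explicit in your write-up: the extension of the semivaluation from $A_0[\frac{1}{y}]$ to $A = A_0[\frac{1}{t}]$ requires $t \notin \mathfrak{p}$, which holds because $t$ is a nonzero divisor in $A_0[\frac{1}{y}]$ (it is a unit in the ambient ring $A[\frac{1}{y}]$) and minimal primes consist of zero divisors.
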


\begin{proof}
Since we have the containments 
$$
(A_0)^+_A\subset A^+\subset A^\circ\subset\Big\{a \in A~\Big|~|a|_x\leq 1 ~\mbox{for any}~x \in [U]\Big\}
$$ 
(the third inclusion holds because $|\cdot|_x$ is of rank $1$), it suffices to show that
\begin{equation}
\label{12072118}
(A_0)^+_A=\Big\{a \in A~\Big|~|a|_x\leq 1 ~\mbox{for any}~x \in [U]\Big\}. 
\end{equation}
By assumption, there exists $g\in A_0$ such that $t=sg$. Let $B$ be the Tate ring associated to $(A_0,(s))$ and $B^+:=(A_0)^+_B$. 
Then we have $A=B[\frac{1}{g}]$, $(A_0)^+_A=(B^+)^+_{B[\frac{1}{g}]}$ and 
\begin{equation}
\label{integralequal2}
(B^+)^+_{B[\frac{1}{g}]}=\Big\{b \in B[\tfrac{1}{g}]~\Big|~|b| \le 1~\mbox{for any}~|\cdot| \in \Val(B[\tfrac{1}{g}],B^+)_{|s|<1}\Big\}
\end{equation}
by Corollary \ref{Cont2}. Let us deduce (\ref{12072118}) from $(\ref{integralequal2})$ by constructing a canonical bijection 
$$
\Val(B[\tfrac{1}{g}],B^+)_{|s|<1}\xrightarrow{\cong}[U]. 
$$ 
Any point $|\cdot|\in\Val(B[\frac{1}{g}],B^+)_{|s|<1}$ satisfies that $|a|\leq 1$ for any $a\in A_0$ and $|t|=|sg|<1$. Thus, since $|\cdot|$ is of rank $1$, $|\cdot|$ gives a continuous semivaluation on $A$ such that $|A^\circ|\leq 1$. Hence we have a canonical injection
\begin{equation}\label{1005WedN}
\Val(B[\tfrac{1}{g}],B^+)_{|s|<1}\hookrightarrow[U].
\end{equation}
Moreover, $B^+\subset A^+$, and $|s|_{x}\neq 0$ for every $x\in [U]$ because $s\in A$ is invertible. 
Hence (\ref{1005WedN}) is also surjective, as desired. 
\end{proof}

Indeed, the following immediate corollary is already documented in a treatise on rigid geometry.

\begin{corollary}[cf.\ {\cite[\textbf{II}, Theorem 8.1.11 and 8.2.19]{FK18}}]
\label{1912132251}
Let $A$ be a complete and separated Tate ring. Suppose that $A$ has a ring of definition $A_0$ that is Noetherian. 
Set $X=\Spa\big(A, (A_0)^+_A\big)$. 
Then we have 
$$
(A_0)^+_A=\big\{a\in A~\big|~|a|_x\leq 1\ \textnormal{for any}\ x\in [X]\big\}. 
$$
\end{corollary}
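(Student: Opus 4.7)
The plan is to deduce this corollary by specializing Corollary \ref{Cont3} to the choices $s=t$ (the pseudouniformizer coming from the ideal of definition) and $A^+=(A_0)^+_A$. With these choices one has $X=\Spa\big(A,(A_0)^+_A\big)$, and the task reduces to checking that the open subspace $U$ appearing in Corollary \ref{Cont3} is in fact all of $X$, so that $[U]=[X]$.

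First I would verify the hypotheses of Corollary \ref{Cont3}. Because $A$ is complete and separated and $A_0$ is a ring of definition, $A_0$ is open in $A$, hence closed in the underlying additive topological group, and therefore $t$-adically complete. A standard geometric-series argument then shows $1+tA_0\subset A_0^\times$, so $t$ lies in the Jacobson radical of $A_0$; this gives the $t$-adically Zariskian condition. The integrality requirement is automatic since $A_0$ is itself Noetherian, and the divisibility condition $t\in sA_0$ is trivial for $s=t$.

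Next I would show that $U=X$. For any $x\in X$, continuity of the valuation together with the topological nilpotence of $t$ forces $|t|_x^n=|t^n|_x\to 0$ in $\Gamma_x\cup\{0\}$, which in particular gives $|t|_x<1$. The maximal generization $\widetilde{x}$ is the rank-one valuation with the same support obtained by passing to an appropriate rank-one quotient of the valuation ring $V_x$; since $t$ is topologically nilpotent it cannot become a unit in that quotient, so $|t|_{\widetilde{x}}<1$ as well. Hence every $x\in X$ lies in $U$, i.e., $U=X$.

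The desired equality then follows at once from Corollary \ref{Cont3}. The only point requiring care is the claim about maximal generizations of continuous valuations, but this is a standard fact in Huber's theory of adic spaces (already tacitly invoked in the proofs of Corollaries \ref{Cont2} and \ref{Cont3} via \cite[Proposition 7.3.7]{Bh17}), so the plan encounters no real obstacle.
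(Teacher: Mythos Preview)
Your proposal is correct and follows exactly the route the paper intends: the paper presents this result as an ``immediate corollary'' of Corollary~\ref{Cont3} (without writing out a proof), and you supply precisely the specialization $s=t$, $A^+=(A_0)^+_A$ together with the routine verifications that $A_0$ is $t$-adically Zariskian (via completeness) and that $U=X$ (since the pseudouniformizer $t$ satisfies $|t|_{\widetilde{x}}<1$ at every maximal generization).
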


Let us discuss an application of the above results. Let $A$ be a ring with regular elements $t,g$. For every $j>0$, we let $A^{j}$ denote the Tate ring associated to $(A[\frac{t^{j}}{g}], (t))$. Then the set of ${A}$-algebras $\{{A}^j\}_{j>0}$ naturally forms an inverse system, where ${A}^{j+1} \to {A}^{j}$ is the isomorphism $A[\frac{t^{j+1}}{g}][\frac{1}{t}]\xrightarrow{\cong} A[\frac{t^j}{g}][\frac{1}{t}]$ defined by the rule 
\begin{equation}
\label{Aj-inverse}
\frac{t^{j+1}}{g}\mapsto t\cdot\frac{t^{j}}{g}
\end{equation}
and compatible with the isomorphisms $A[\frac{t^j}{g}][\frac{1}{t}]\xrightarrow{\cong}A[\frac{1}{t g}]$ $(j>0)$. 
Then ${A}^{j+1} \to {A}^{j}$ is a continuous ring map between Tate rings, so that it induces $A^{j+1\circ} \to A^{j\circ}$. 
%
There is the following commutative diagram:
$$
\begin{CD}
A @>>> A^{j+1\circ} \\
@| @VVV \\
A @>>> A^{j\circ}. \\
\end{CD}
$$
Now we can prove the following type of extension theorem, which  is fitting into the framework of Zariskian geometry; see \cite{T18} for more details.

\begin{theorem}[Zariskian Riemann's extension theorem]
\label{RiemannExtadic}
Let $A$ be a ring with a regular element $t$ that is $t$-adically Zariskian and integral over a Noetherian ring. Let $g\in A$ be a regular element. 
Let $A^{j}$ be the Tate ring associated to $\big(A[\frac{t^j}{g}], (t)\big)$ for every integer $j>0$. Then we have an isomorphism of rings 
$$
A^+_{A[\frac{1}{tg}]} \xrightarrow{\cong} \varprojlim_{j>0}A^{j\circ}.
$$
\end{theorem}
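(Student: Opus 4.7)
First I observe that for every $j\geq 1$ the underlying ring of $A^j$ is just $A[\frac{1}{\varpi g}]$, because $g=\varpi^j\cdot(\frac{\varpi^j}{g})^{-1}$ becomes a unit once $\varpi$ is inverted in $A[\frac{\varpi^j}{g}]$; what varies with $j$ is only the Tate topology. Since $A[\frac{\varpi^{j+1}}{g}]=A[\varpi\cdot\frac{\varpi^j}{g}]\subset A[\frac{\varpi^j}{g}]$, the topology on $A^{j+1}$ refines that on $A^j$, so the transition map is the tautological inclusion $A^{j+1\circ}\subset A^{j\circ}$, and $\varprojlim_j A^{j\circ}=\bigcap_j A^{j\circ}$ inside $A[\frac{1}{\varpi g}]$. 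The theorem is therefore equivalent to the equality $A^+_{A[\frac{1}{\varpi g}]}=\bigcap_j A^{j\circ}$.

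The inclusion $A^+_{A[\frac{1}{\varpi g}]}\subset\bigcap_j A^{j\circ}$ is routine: if $x$ is integral over $A$, then $\{x^m\}_{m\geq 0}$ lies in the finitely generated $A$-module $A[x]$, which is bounded inside every $A^j$ because $A$ sits inside the ring of definition $A[\frac{\varpi^j}{g}]$. For the reverse inclusion I plan to invoke the valuation-theoretic characterization of integral closure from Corollary \ref{Cont2}. Set $T:=A[\frac{1}{\varpi}]$, regarded as a Tate ring with ring of definition $A$, and let $T^+$ denote the integral closure of $A$ in $T$. Then $T^+$ is integral over the same Noetherian ring as $A$, and its maximal ideals contract to maximal ideals of $A$ by integrality; since $\varpi$ lies in every maximal ideal of $A$, $T^+$ is $\varpi$-adically Zariskian. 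Applying Corollary \ref{Cont2} to $(T,T^+)$ with pseudouniformizer $\varpi$ and nonzero divisor $g$, and using transitivity of integral closure, I get
$$
A^+_{A[\frac{1}{\varpi g}]}=\Bigl\{x\in A[\tfrac{1}{\varpi g}]\Bigm| |x|\le 1 \text{ for every }|\cdot|\in\Val\bigl(A[\tfrac{1}{\varpi g}],T^+\bigr)\text{ with }|\varpi|<1\Bigr\}.
$$

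Now pick $x\in\bigcap_j A^{j\circ}$ and any semivaluation $|\cdot|$ in the above set. Because $|\cdot|$ has rank $\leq 1$ and $|\varpi|<1$, its value group embeds in $\mathbb{R}_{>0}$; moreover $|g|>0$, since $g$ is a unit in $A[\frac{1}{\varpi g}]$. The Archimedean property then lets me choose $j$ large enough that $|\varpi|^j\le|g|$, i.e.\ $\bigl|\frac{\varpi^j}{g}\bigr|\le 1$. Combined with $|A|\le 1$, the non-archimedean inequality yields $|y|\le 1$ for every $y\in A[\frac{\varpi^j}{g}]$. Since $x\in A^{j\circ}$ supplies an $N$ with $\varpi^N x^m\in A[\frac{\varpi^j}{g}]$ for all $m\geq 0$, we obtain $|x|^m\le|\varpi|^{-N}$ uniformly in $m$, and rank-one Archimedeanity forces $|x|\le 1$. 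The main obstacle in this plan is squarely the middle step: verifying the hypotheses of Corollary \ref{Cont2} for the auxiliary pair $(T,T^+)$, in particular propagating the $\varpi$-adic Zariskian property from $A$ to its integral closure $T^+$. Once that characterization is in hand, the Archimedean scaling in $j$ delivers surjectivity cleanly, and injectivity is automatic because both sides sit inside the common ring $A[\frac{1}{\varpi g}]$.
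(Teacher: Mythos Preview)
Your proposal is correct and follows essentially the same strategy as the paper: reduce to a valuation-theoretic description of the integral closure, then for each rank-$1$ valuation with $|\varpi|<1$ use Archimedeanity to choose $j$ large enough that $|\varpi^j/g|\le 1$, which forces $|x|\le 1$ for any $x\in A^{j\circ}$.

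The only packaging difference is in how the valuation-theoretic input is set up. You pass to the integral closure $T^+$ of $A$ in $A[\frac{1}{\varpi}]$ and invoke Corollary~\ref{Cont2} for the affinoid pair $(T,T^+)$, which requires you to check that $T^+$ is $\varpi$-adically Zariskian (your argument via integrality and lying-over is fine). The paper instead equips $A$ itself with the $(\varpi g)$-adic topology and applies Corollary~\ref{Cont3} with $s=\varpi$, $t=\varpi g$, so the Zariskian hypothesis is the one already assumed on $A$ and no auxiliary $T^+$ is needed; it then phrases the Archimedean step as surjectivity of a map $\varinjlim_j[X_j]\to[U]$ of maximal separated quotients. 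Both routes unwind to the same computation, and your worry about ``propagating the Zariskian property'' is not a genuine obstacle: it follows immediately from the fact that maximal ideals in an integral extension lie over maximal ideals.
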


\begin{proof}
By assumption, we have a canonical ring isomorphism $\varphi_j: A[\frac{1}{tg}]\xrightarrow{\cong}A^{j}$ for each $j>0$. 
By restricting $\varphi_j$ to $A^+_{A[\frac{1}{tg}]}$, 
we obtain the ring map $\varphi^+_j: A^+_{A[\frac{1}{tg}]}\to A^{j\circ}$. Then 
$\{\varphi_j\}_{j>0}$ and $\{\varphi^+_j\}_{j>0}$ induce the commutative diagram of ring maps

\begin{equation}
\begin{CD}\label
{eq3412}
A^+_{A[\frac{1}{tg}]}@>{\varphi^+}>> \varprojlim_{j>0}A^{j\circ} \\
@VVV @VVV \\
A[\frac{1}{tg}]@>\cong>\varphi>\ \varprojlim_{j>0}A^{j}
\end{CD}
\end{equation}
where $\varphi$ is an isomorphism and the vertical maps are injective. 
Thus it suffices to prove that (\ref{eq3412}) is cartesian. Pick $c\in A[\frac{1}{tg}]$ such that $\varphi_j(c)\in A^{j\circ}$ for every $j>0$. 
Let us show that $c$ lies in $A^+_{A[\frac{1}{tg}]}$ by applying Corollary \ref{Cont3}. For this, we consider the $(tg)$-adic topology: 
let $A_{(tg)}$ be the Tate ring associated to $\big(A, (tg)\big)$ (notice that each $A^j$ is also the Tate ring associated to $\big(A[\frac{t^j}{g}], (tg)\big)$). Let $X_{(tg)}=\Spa(A_{(tg)}, A^+_{A_{(tg)}})$, $X_{j}=\Spa(A^{j}, A^{j\circ})$ for each $j>0$, and let $U$ be the subspace 
$$
U=\Big\{x\in X_{(tg)}~\Big|~|t|_{\widetilde{x}}< 1\ ~\mbox{for the maximal generization}~\widetilde{x}~\mbox{of}~x\Big\}
$$
of $X_{(tg)}$. 
Then the underlying ring of $A_{(tg)}$ is equal to $A[\frac{1}{tg}]$, and we have 
$$
A_{A_{(tg)}}^+=\Big\{a \in A_{(tg)}~\Big|~|a|_x\leq 1\ \textnormal{for all}\ x \in [U]\Big\}
$$
by Corollary \ref{Cont3}. 
On the other hand, since
$$
A^{j\circ}=\Big\{a\in A^{j}~\Big|~|a|_{x_j}\leq 1\ \textnormal{for all }x_j\in [X_{j}]\Big\}
$$
by Proposition \ref{propIC=CIC}, we have $|\varphi_j(c)|_{x_j}\leq 1$ for all $j>0$ and all $x_j\in [X_{j}]$. Now since $\varphi_j$ gives a continuous map $\big(A_{(tg)},A^+_{A_{(tg)}}\big) \to (A^j,A^{j\circ})$, (\ref{eq3412}) induces the continuous map $\varinjlim_{j>0}[X_j]\to [X_{(tg)}]$, which factors through $[U]$ because $t\in A^{j}$ is topologically nilpotent. Thus we are reduced to showing that the resulting map $f: \varinjlim_{j>0}[X_{j}]\to [U]$ is surjective. 

Pick $x\in [U]$ and let $|\cdot|_x: {A}_{(tg)}\to \mathbb{R}_{\geq 0}$ be a corresponding semivaluation. Let us find some $j_0>0$ such that the composite 
$$
|\cdot|_{x, j_0}: {A}^{j_0}\to{A}_{(tg)}\xrightarrow{|\mspace{3mu}\cdot\mspace{3mu}|_x}\mathbb{R}_{\geq 0}
$$
gives a point $x_{j_0}\in [X_{j_0}]$ for which $f([x_{j_0}])=x$. 
Since $|t|_x<1$ and $|\cdot|_x$ is of rank $1$, there exists some $j_0>0$ such that $|\frac{t^{j_0}}{g}|_x<1$. 
Then we have $|A[\frac{t^{j_0}}{g}]|_x\leq 1$ because $|A|_x\leq 1$ and $|\cdot|_x$ is of rank $1$. 
Thus, since any $a\in A^{j_0\circ}$ is almost integral over $A[\frac{t^{j_0}}{g}]$ and $|\cdot|_x$ is of rank 1, we have 
$|A^{j_0\circ}|_{x, j_0}\leq 1$. Hence $|\cdot|_{x, j_0}$ gives the desired point $x_{j_0}\in [X_{j_0}]$. 
\end{proof}

\subsubsection{Witt-perfect Riemann's extension theorem}
Next we shall investigate the Riemann's extension problem in the context of Witt-perfect rings by transporting the situation to the case of perfectoid algebras, in which case Riemann's extension theorem has been studied by Andr\'e, Bhatt and Scholze and known to experts. 
Let us start setting up some notation.
\\

$\bf{Notation}$: Fix a prime number $p>0$ and a $p$-torsion free ring $A$ that admits a compatible system of $p$-power roots $g^{\frac{1}{p^n}} \in A$ for a regular element $g \in A$ for $n>0$. Moreover, assume the following premises: 
\begin{enumerate}
\item
$A$ is an algebra over a $p$-adically separated $p$-torsion free Witt-perfect valuation domain $V$ of rank $1$ such that $p^{\frac{1}{p^n}} \in V$ for $n>0$.

\item
$A$ is a $(pg)^{\frac{1}{p^\infty}}$-almost Witt-perfect ring and $A$ is completely integrally closed in $A[\frac{1}{p}]$.

\item
$(p,g)$ is a $(p)^{\frac{1}{p^\infty}}$-almost regular sequence on $A$ (while retaining that $p,g$ are regular elements). 
\end{enumerate}

In this situation, we use the following notation.

\begin{definition}\label{DefAj}
Let $B$ be a $pg$-torsion free $A$-algebra, and let $\widehat{(~)}$ denote the $p$-adic completion. 
\begin{enumerate}
\item
We define $K:=\widehat{V}[\frac{1}{p}]$ and equip it with the norm $||\cdot||_{\widehat{V}, (p), p}$ in the sense of 
\cite[Definition 2.26]{NS19}. 
\item
We define $\mathcal{B}:=\widehat{B}[\frac{1}{p}]$ and equip it with the norm $||\cdot||_{\widehat{B}, (p), p}$  in the sense of \cite[Definition 2.26]{NS19}. 
\item
For every $j>0$, 
we define $B^{j}$ as the Tate ring associated to $(B[\frac{p^{j}}{g}], (p))$  (cf.\ Definition \ref{TateRingDef} (2)). 
\item
Let $\mathcal{C}$ be a Banach $\mathcal{A}$-algebra, and let $||\cdot||_{\mathcal{C}}$ be the norm on it. 
Put 
$$
\mathcal{C}_{0}:=\{c\in \mathcal{C}\ |\ ||c||_{\mathcal{C}}\leq 1\}. 
$$
For every $j>0$, we define $\mathcal{C}^{j}=\widehat{\mathcal{C}_{0}[\frac{p^{j}}{g}]}[\frac{1}{p}]$ and equip it with the norm 
$||\cdot||_{\widehat{\mathcal{C}_{0}[\frac{p^{j}}{g}]}, (p), p}$ in the sense of \cite[Definition 2.26]{NS19}. 
In particular, $\mathcal{B}^{j}$ is the Banach ring $\widehat{\widehat{B}[\frac{p^{j}}{g}]}[\frac{1}{p}]$ equipped with the norm 
$||\cdot||_{\widehat{\widehat{B}[\frac{p^{j}}{g}]}, (p), p}$. 
\end{enumerate}
\end{definition}

Here we should list several remarks.

\begin{remark}
\label{DefAi2}
The notations are as above. 
\begin{enumerate}
\item
By Proposition \ref{PerfWitt} (and its proof), $K$ is a perfectoid field and $\mathcal{A}$ is a $(pg)^{\frac{1}{p^\infty}}$-almost perfectoid algebra over $K$. 

\item
It follows from Lemma \ref{adiccompletion} that $g \in \mathcal{A}$ is a $(p)^{\frac{1}{p^\infty}}$-almost regular element. 

\item
The natural map $\mathcal{A}^{\natural\circ} \hookrightarrow  \mathcal{A}^{\circ}$ is a $(pg)^{\frac{1}{p^\infty}}$-almost isomorphism, where $\mathcal{A}^{\natural}$ denotes the untilt of the tilt of $\mathcal{A}$ and $\mathcal{A}^\natural$ is a perfectoid $K$-algebra in view of \cite[Proposition 3.5.4]{An1}. 

\item
$\mathcal{B}$ is a Banach $\mathcal{A}$-algebra. 
Moreover, ${\mathcal{B}}^{j}$ is a Banach $\mathcal{B}$-algebra that is viewed as a ring of analytic functions on the rational subset $\big\{x \in X~\big|~|p^j| \le |g(x)|\big\}$ of $X:=\Spa(\mathcal{B},\mathcal{B}^\circ)$ (cf.\ \cite[Proposition 1.3 and 1.6]{Hu94}). 

\item
Equip the polynomial ring $\mathcal{B}[T]$ with the Gauss norm (cf.\ \cite[Definition 1.6]{K13}), and consider the completion $\mathcal{B}\langle T\rangle$ of it. 
Let $(gT-p^{j})^{-}$ denote the closure of the ideal $(gT-p^{j})$ in ${\mathcal{B}}\langle T\rangle$. 
Then ${\mathcal{B}}^{j}$ can be identified with $\mathcal{B}\langle T\rangle/(gT-p^{j})^{-}$ as a ring, and the norm on ${\mathcal{B}}^{j}$ is equivalent to the quotient norm on 
$\mathcal{B}\langle T\rangle/(gT-p^{j})^{-}$. 
\end{enumerate}
\end{remark}

The most important case is when $B=A$. Let us investigate several properties of $A^{j}$ and $\mathcal{A}^{j}$. 
First we describe the relationship of them.

\begin{lemma}\label{Aj-completion}
In the situation of Definition \ref{DefAj}, the following assertions hold for every $j>0$. 
\begin{enumerate}
\item
The natural $A$-algebra map: 
\begin{equation}\label{natAalgmap2}
\widehat{A\big[\big(\frac{p^j}{g}\big)^{\frac{1}{p^\infty}}\big]} \rightarrow \widehat{\widehat{A}\big[\big(\frac{p^j}{g}\big)^{\frac{1}{p^\infty}}\big]}
\end{equation}
(where the completions are $p$-adic) is a $(p)^{\frac{1}{p^\infty}}$-almost isomorphism. 
\item
Let $\widehat{A^{j}}$ be the separated completion of $A^{j}$. 
Applying the functor $(~)\otimes_{A}A[\frac{1}{p}]$ to (\ref{natAalgmap2}) yields an isomorphism of topological rings: 
$$
\widehat{A^{j}}\xrightarrow{\cong} \mathcal{A}^{j}. 
$$
\item
Let $\psi_{j}$ denote the composite map $A[\frac{1}{pg}]=A^{j}\to\widehat{A^{j}}\xrightarrow{\cong} \mathcal{A}^{j}$. 
Then we have the following identification of rings: 
\begin{equation}
\label{InterWitt}
A^{j\circ}=A[\frac{1}{pg}] \times_{\mathcal{A}^j} \mathcal{A}^{j\circ}=\Big\{a \in A[\frac{1}{pg}]~\Big|~\psi_{j}(a) \in \mathcal{A}^{j\circ}\Big\}. 
\end{equation}
\end{enumerate}
\end{lemma}
\begin{proof}
(1):  By assumption, $(p,g)$ is a $(p)^{\frac{1}{p^\infty}}$-almost regular sequence on $A$. 
Hence by Proposition \ref{RegularRatLoc}, the natural map $\widehat{A\big[\big(\frac{p^j}{g}\big)^{\frac{1}{p^n}}\big]} \to \widehat{\widehat{A}\big[\big(\frac{p^j}{g}\big)^{\frac{1}{p^n}}\big]}$ is a $(p)^{\frac{1}{p^\infty}}$-almost isomorphism for every $n\geq 0$. Moreover, we have the following commutative diagram: 
\[\xymatrix{
\varinjlim_{n\geq 0}\widehat{A\big[\big(\frac{p^j}{g}\big)^{\frac{1}{p^n}}\big]} \ar[r]^\approx\ar[d]& \varinjlim_{n\geq 0}\widehat{\widehat{A}\big[\big(\frac{p^j}{g}\big)^{\frac{1}{p^n}}\big]}
\ar[d]\\
\widehat{A\big[\big(\frac{p^j}{g}\big)^{\frac{1}{p^\infty}}\big]} \ar[r]^{(\ref{natAalgmap2})}& \widehat{\widehat{A}\big[\big(\frac{p^j}{g}\big)^{\frac{1}{p^\infty}}\big]}.
}\]
The vertical maps become isomorphisms after applying $(~)\otimes_{A}A/p^{m}A$. Hence they become isomorphisms after $p$-adic completion. 
Hence by Lemma \ref{almostcomp}, the lower map (\ref{natAalgmap2}) is also a $(p)^{\frac{1}{p^\infty}}$-almost isomorphism. 

(2): Since $pgA[(\frac{p^j}{g}\big)^{\frac{1}{p^\infty}}]\subset A[\frac{p^{j}}{g}]$ and $p^{j+1}=\frac{p^{j}}{g}\cdot pg$, we have $p^{j+1}A[(\frac{p^j}{g}\big)^{\frac{1}{p^\infty}}]\subset A[\frac{p^{j}}{g}]$. Similarly, $p^{j+1}\widehat{A}[(\frac{p^j}{g}\big)^{\frac{1}{p^\infty}}]\subset \widehat{A}[\frac{p^{j}}{g}]$. 
Hence by \cite[Lemma 2.5]{NS19}, the ring maps $\widehat{A[\frac{p^{j}}{g}]}\to \widehat{A[(\frac{p^j}{g}\big)^{\frac{1}{p^\infty}}]}$ and $\widehat{\widehat{A}[\frac{p^{j}}{g}]}\to \widehat{\widehat{A}[(\frac{p^j}{g}\big)^{\frac{1}{p^\infty}}]}$ are injective, and their cokernels are annihilated by $p^{j+1}$. 
In particular, they become isomorphisms after inverting $p$. Hence one can regard $\widehat{A[(\frac{p^j}{g}\big)^{\frac{1}{p^\infty}}]}$ and $\widehat{\widehat{A}[(\frac{p^j}{g}\big)^{\frac{1}{p^\infty}}]}$ as rings of definition of $\widehat{A^{j}}$ and $\mathcal{A}^{j}$, respectively.

Since (\ref{natAalgmap2}) is $(p)^{\frac{1}{p^\infty}}$-almost bijective, it becomes an isomorphism after inverting $p$. 
Thus, applying the functor $(~)\otimes_{A}A[\frac{1}{p}]$ to (\ref{natAalgmap2}) yields a canonical isomorphism of rings 
\begin{equation}\label{AjmathcalAj}
\widehat{A^{j}}\xrightarrow{\cong}\mathcal{A}^{j}, 
\end{equation}
which restricts to (\ref{natAalgmap2}). Moreover, (\ref{natAalgmap2}) is a $(p)^{\frac{1}{p^\infty}}$-almost surjective embedding from a ring of definition of  $\widehat{A^{j}}$ into that of $\mathcal{A}^{j}$. 
In particular, $p(\widehat{\widehat{A}[(\frac{p^j}{g}\big)^{\frac{1}{p^\infty}}]})$ is contained in the image of $\widehat{A[(\frac{p^j}{g}\big)^{\frac{1}{p^\infty}}]}$ via (\ref{AjmathcalAj}).  Therefore, (\ref{AjmathcalAj}) is also a homeomorphism, as desired.

(3): Since $A^{j}=A[\frac{1}{pg}]$ as rings, this assertion immediately follows from the assertion (2) and Corollary \ref{VariantBL}. 
\end{proof}

Next we discuss preuniformity (resp.\ uniformity) of $A^{j}$ (resp.\ $\mathcal{A}^{j}$).

\begin{proposition}
\label{pg-approx}
In the situation of Definition \ref{DefAj}, the following assertions hold for every $j>0$. 
\begin{enumerate}
\item
$\mathcal{A}^{j}$ is a perfectoid $K$-algebra. In particular, $\mathcal{A}^{j}$ is uniform. 
\item
There is an $A[\frac{p^j}{g}]$-algebra homomorphism: 
\begin{equation}
\label{Scholzeapprox}
\widehat{A\big[\big(\frac{p^j}{g}\big)^{\frac{1}{p^\infty}}\big]} \xrightarrow{\approx}\mathcal{A}^{j \circ}
\end{equation}
that is $(p)^{\frac{1}{p^\infty}}$-almost bijective. 
\item
The inclusion map: 
\begin{equation}
\label{Scholzeapprox2}
A\big[\big(\frac{p^j}{g}\big)^{\frac{1}{p^\infty}}\big] \hookrightarrow A^{j \circ}
\end{equation}
 is a $(p)^{\frac{1}{p^\infty}}$-almost isomorphism. In particular, $A^j$ is preuniform.

\item
There is an $A[\frac{p^j}{g}]$-algebra isomorphism:
$$
\widehat{A^{j\circ}} \xrightarrow{\cong} \mathcal{A}^{j\circ}.
$$
Moreover, $A^{j\circ}$ is Witt-perfect.
\end{enumerate}
\end{proposition}

\begin{proof}
(1): Let $\mathcal{A}^{\natural}$ denote the untilt of the tilt of $\mathcal{A}$. Then $\mathcal{A}^\natural$ is a perfectoid $K$-algebra in view of \cite[Proposition 3.5.4]{An1}. Since the natural map $\mathcal{A}^{\natural\circ} \hookrightarrow  \mathcal{A}^{\circ}$ is a $(pg)^{\frac{1}{p^\infty}}$-almost isomorphism, the induced map $\mathcal{A}^{\natural\circ}[\frac{p^{j}}{g}] \hookrightarrow  \mathcal{A}^{\circ}[\frac{p^{j}}{g}]$ is a $(p)^{\frac{1}{p^\infty}}$-almost isomorphism. 
Therefore, the induced map $\widehat{\mathcal{A}^{\natural\circ}[\frac{p^{j}}{g}]} \rightarrow  \widehat{\mathcal{A}^{\circ}[\frac{p^{j}}{g}]}$ is also a $(p)^{\frac{1}{p^\infty}}$-almost isomorphism by Lemma \ref{almostcomp}.  Hence inverting $p$ yields an isomorphism of topological rings $(\mathcal{A}^{\natural})^{j}\to \mathcal{A}^{j}$. 
On the other hand, $(\mathcal{A}^\natural)^j$ is a perfectoid $K$-algebra by \cite[Theorem 6.3 (ii)]{Sch12}. 
Thus, $\mathcal{A}^j$ is a perfectoid $K$-algebra. 

(2): By the above, $(\mathcal{A}^{\natural})^{j}\to \mathcal{A}^{j}$ restricts to an isomorphism of rings $(\mathcal{A}^{\natural})^{j\circ}\xrightarrow{\cong}\mathcal{A}^{j\circ}$. 
Moreover, $\mathcal{A}^{\natural\circ} \hookrightarrow  \mathcal{A}^{\circ}$ induces the $(p)^{\frac{1}{p^\infty}}$-almost isomorphism 
$\widehat{\mathcal{A}^{\natural\circ}\big[\big(\frac{p^j}{g}\big)^{\frac{1}{p^\infty}}\big]}\xrightarrow{\approx}\widehat{\mathcal{A}^{\circ}\big[\big(\frac{p^j}{g}\big)^{\frac{1}{p^\infty}}\big]}$ by Lemma \ref{almostcomp}, and $\mathcal{A}^{\circ}=\widehat{A}$. 
Thus we have the following commutative diagram: 
\[\xymatrix{
\widehat{\mathcal{A}^{\natural\circ}\big[\big(\frac{p^j}{g}\big)^{\frac{1}{p^\infty}}\big]}\ar[d]\ar[r]^{\approx}&\ar[d]\widehat{\widehat{A}\big[\big(\frac{p^j}{g}\big)^{\frac{1}{p^\infty}}\big]}\\
(\mathcal{A}^\natural)^{j\circ}\ar[r]^{\cong}&\mathcal{A}^{j\circ}
}\]
where the left vertical map is a $(p)^{\frac{1}{p^\infty}}$-almost isomorphism in view of Scholze's result \cite[Lemma 6.4]{Sch12}. 
Hence the right vertical map:
\begin{equation}\label{pgalmostisom}
\widehat{\widehat{A}\big[\big(\frac{p^j}{g}\big)^{\frac{1}{p^\infty}}\big]} \xrightarrow{\approx} \mathcal{A}^{j \circ}
\end{equation} 
is a $(p)^{\frac{1}{p^\infty}}$-almost isomorphism. 
By considering the composition of (\ref{natAalgmap2}) and (\ref{pgalmostisom}), we obtain the desired $(p)^{\frac{1}{p^\infty}}$-almost  isomorphism
\begin{equation}
\label{Scholzeapprox}
\widehat{A\big[\big(\frac{p^j}{g}\big)^{\frac{1}{p^\infty}}\big]} \xrightarrow{\approx} \mathcal{A}^{j \circ}.
\end{equation}

(3): First recall that $A^j=A\big[\big(\frac{p^j}{g}\big)^{\frac{1}{p^\infty}}\big][\frac{1}{p}]$ as rings (see the proof of Lemma \ref{Aj-completion} (2)). By Lemma \ref{Beauville-Laszlo}, we have $A\big[\big(\frac{p^j}{g}\big)^{\frac{1}{p^\infty}}\big]=A^j \times_{\widehat{A\big[\big(\frac{p^j}{g}\big)^{\frac{1}{p^\infty}}\big] }[\frac{1}{p}]}\widehat{A\big[\big(\frac{p^j}{g}\big)^{\frac{1}{p^\infty}}\big] }$. 
Moreover, the  $(p)^{\frac{1}{p^\infty}}$-almost isomorphism (\ref{Scholzeapprox}) extends to an isomorphism of rings 
$\widehat{A\big[\big(\frac{p^j}{g}\big)^{\frac{1}{p^\infty}}\big]}[\frac{1}{p}] \xrightarrow{\cong} \mathcal{A}^{j}$. 
Hence we have $A\big[\big(\frac{p^j}{g}\big)^{\frac{1}{p^\infty}}\big]=A[\frac{1}{pg}] \times_{\mathcal{A}^j}\widehat{A\big[\big(\frac{p^j}{g}\big)^{\frac{1}{p^\infty}}\big] }$. 
On the other hand, by Lemma \ref{Aj-completion} (3), $A^{j\circ}=A[\frac{1}{pg}] \times_{\mathcal{A}^j}\mathcal{A}^{j \circ}$. 
By construction, the diagram of rings: 
\[\xymatrix{
A\big[\big(\frac{p^j}{g}\big)^{\frac{1}{p^\infty}}\big]\ar@^{(->}[rr]\ar[d]&&A^{j\circ}\ar[d]\\
\widehat{A\big[\big(\frac{p^j}{g}\big)^{\frac{1}{p^\infty}}\big]}\ar[rr]_{(\ref{Scholzeapprox})}&&\mathcal{A}^{j\circ}
}\]
is commutative. Thus, the ring map 
\begin{equation}\label{bscgAjmcalAj}
A[\frac{1}{pg}] \times_{\mathcal{A}^j}\widehat{A\big[\big(\frac{p^j}{g}\big)^{\frac{1}{p^\infty}}\big] }\to A[\frac{1}{pg}] \times_{\mathcal{A}^j}\mathcal{A}^{j \circ}
\end{equation}
induced by (\ref{Scholzeapprox}) is isomorphic to $A\big[\big(\frac{p^j}{g}\big)^{\frac{1}{p^\infty}}\big]\hookrightarrow A^{j\circ}$. 
Moreover, by Lemma \ref{PullbackRings}, (\ref{bscgAjmcalAj})  
is a $(p)^{\frac{1}{p^\infty}}$-almost isomorphism. 
Hence the first assertion follows. 
In particular, we have $p^{j+2}A^{j\circ}\subset p^{j+1}A[\big(\frac{p^j}{g}\big)^{\frac{1}{p^\infty}}]\subset A[\frac{p^{j}}{g}]$. Therefore, $A^{j}$ is preuniform.

(4): 
Since $A^{j}$ is preuniform, one can apply \cite[Proposition 2.4 (1)-(b)]{NS19} 
to the inclusion map $A[\frac{p^{j}}{g}]\hookrightarrow A^{j\circ}$ and deduce that the induced map $\widehat{A[\frac{p^{j}}{g}]}\rightarrow \widehat{A^{j\circ}}$ extends to an isomorphism of rings 
\begin{equation}\label{widehatcircle}
(\widehat{A^{j}})^{\circ}\xrightarrow{\cong}\widehat{A^{j\circ}}. 
\end{equation}
On the other hand, Lemma \ref{Aj-completion} (2) allows us to extend (\ref{natAalgmap2}) to a ring isomorphism: 
\begin{equation}\label{AjhatmcalAj}
(\widehat{A^{j}})^{\circ}\xrightarrow{\cong} \mathcal{A}^{j\circ}. 
\end{equation} 
By composing the inverse map of (\ref{widehatcircle}) and (\ref{AjhatmcalAj}), we obtain the desired isomorphism. 
In particular, $A^{j\circ}/(p)\cong\mathcal{A}^{j\circ}/(p)$ and $A^{j\circ}/(p^{2})\cong\mathcal{A}^{j\circ}/(p^{2})$. 
Since $\mathcal{A}^{j}$ is perfectoid, $\mathcal{A}^{j\circ}$ is Witt-perfect by Proposition \ref{PerfWitt}. 
Hence $A^{j\circ}$ is also Witt-perfect. 
\end{proof}

The set of $\mathcal{A}$-algebras $\{\mathcal{A}^j\}_{j>0}$ forms an inverse system, where $\mathcal{A}^{j+1} \to \mathcal{A}^{j}$ is the natural inclusion defined by 
\begin{equation}
\label{transition}
\frac{p^{j+1}}{g}\mapsto p\cdot\frac{p^{j}}{g}. 
\end{equation}
Then $\mathcal{A}^{j+1} \to \mathcal{A}^{j}$ is a continuous map between Banach $K$-algebras, so that it induces $\mathcal{A}^{j+1\circ} \to \mathcal{A}^{j\circ}$. Recall that we already defined an inverse system $\{A^{j}\}_{j> 0}$ in a similar way; see $(\ref{Aj-inverse})$. After the preparations we have made above, we will establish \textit{Witt-perfect Riemann's Extension Theorem} (see Theorem \ref{RiemannExt} below). 
Notice that it is independent of Zariskian Riemann's extension theorem (Theorem \ref{RiemannExtadic}).

\begin{theorem}[Witt-perfect Riemann's extension theorem]
\label{RiemannExt}
Let $A$ be a $p$-torsion free algebra over a $p$-adically separated $p$-torsion free Witt-perfect valuation domain $V$ of rank $1$ admitting a compatible system of $p$-power roots $p^{\frac{1}{p^n}} \in V$, together with a regular element $g \in A$ admitting a compatible system of $p$-power roots $g^{\frac{1}{p^n}} \in A$. Denote by $\widehat{(~)}$ the $p$-adic completion and suppose that the following conditions hold:
\begin{enumerate}
\item
$A$ is a $(pg)^{\frac{1}{p^\infty}}$-almost Witt-perfect ring and $A$ is completely integrally closed in $A[\frac{1}{p}]$. 

\item
$(p,g)$ is a $(p)^{\frac{1}{p^\infty}}$-almost regular sequence on $A$ (which merely says that $g$ is a $(p)^{\frac{1}{p^\infty}}$-almost regular element on
$A/(p)$). 
\\

Then the following assertions hold. 
\begin{enumerate}
\item
We have the following identification of rings: 
\begin{equation}
\label{fiberproduct1}
\varprojlim_{j>0}A^{j\circ}=A[\frac{1}{pg}]\times_{\mathcal{A}[\frac{1}{g}]}g^{-\frac{1}{p^{\infty}}}\mathcal{A}^{\circ}. 
\end{equation}
\item
There is an injective $A$-algebra map:
$$
\widehat{\varprojlim_{j>0} A^{j\circ}} \hookrightarrow \varprojlim_{j>0} \widehat{A^{j\circ}},
$$
whose cokernel is $(g)^{\frac{1}{p^\infty}}$-almost zero.
\end{enumerate}
\end{enumerate}
\end{theorem}

\begin{proof}
(a): By Lemma \ref{Aj-completion} (3), we have canonical isomorphisms: 
\begin{eqnarray}
\label{fiberproduct2}
\varprojlim_{j>0} A^{j\circ} \cong\varprojlim_{j>0}\Big(A[\frac{1}{pg}] \times_{\mathcal{A}^j} \mathcal{A}^{j\circ}\Big) \cong A[\frac{1}{pg}] \times_{\varprojlim_{j>0}\mathcal{A}^j} \varprojlim_{j>0}\mathcal{A}^{j\circ}.  \nonumber
\end{eqnarray}
On the other hand, it follows from Riemann's extension theorem for (almost) perfectoid $K$-algebras \cite[Th\'eor\`eme 4.2.2]{An1} (see Theorem \ref{Hebbarkeits1} for the detailed proof) that there is an $\mathcal{A}^{\circ}$-algebra isomorphism:
\begin{equation}
\label{completionflat1}
g^{-\frac{1}{p^\infty}}\mathcal{A}^\circ \xrightarrow{\cong} \varprojlim_{j>0} \mathcal{A}^{j\circ}.
\end{equation}
Moreover, we have the commutative diagram of rings: 
\[\xymatrix{
A\ar@{^(->}[dd]\ar[rr]\ar[dr]&&g^{-\frac{1}{p^\infty}}\mathcal{A}^\circ\ar[dl]^{(\ref{completionflat1})}\ar@^{(->}[dd]\\
&\varprojlim_{j>0}\mathcal{A}^{j\circ}\ar@^{(->}[dd]&\\
A[\frac{1}{pg}]\ar[rr]\ar[rd]_\alpha&&\ar[ld]^\beta (g^{-\frac{1}{p^\infty}}\mathcal{A}^\circ)[\frac{1}{pg}]\\
&\varprojlim_{j>0}\mathcal{A}^{j}&
}\]
where $\alpha$ and $\beta$ are the unique maps induced by the universal property of localization. 
Since the composite map $g^{-\frac{1}{p^\infty}}\mathcal{A}^\circ\xrightarrow{\cong} \varprojlim_{j>0}\mathcal{A}^{j\circ}\hookrightarrow \varprojlim_{j>0}\mathcal{A}^{j}$ is injective, $\beta$ is also injective. Thus we have 

$$
A[\frac{1}{pg}] \times_{\varprojlim_{j>0}\mathcal{A}^j} \varprojlim_{j>0}\mathcal{A}^{j\circ} \cong 
A[\frac{1}{pg}] \times_{\varprojlim_{j>0}\mathcal{A}^j} g^{-\frac{1}{p^\infty}}\mathcal{A}^\circ\cong 
A[\frac{1}{pg}] \times_{(g^{-\frac{1}{p^\infty}}\mathcal{A}^\circ)[\frac{1}{pg}]} g^{-\frac{1}{p^\infty}}\mathcal{A}^\circ. 
$$
Since $(g^{-\frac{1}{p^\infty}}\mathcal{A}^\circ)[\frac{1}{pg}]=\mathcal{A}[\frac{1}{g}]$, the assertion follows. 

(b): 
Since the natural maps $A\to A^{j\circ}$ $(j> 0)$ are compatible with the inverse system $\{A^{j\circ}\}_{j> 0}$, 
one can define a canonical structures as an $A$-algebra on each one of the rings 
\begin{equation}\label{313MonN}
\widehat{\varprojlim_{j> 0}A^{j\circ}}=\varprojlim_{n>0}\Big(\big(\varprojlim_{j>0} A^{j\circ}\big)/(p^n)\Big)\ \ \textnormal{and}\ \ 
\varprojlim_{j>0} \widehat{A^{j\circ}}=\varprojlim_{j>0}\varprojlim_{n>0}\big(A^{j\circ}/(p^n)\big). 
\end{equation}
For a fixed $n>0$, consider the exact sequence of inverse systems of $A$-algebras: $0 \to \{A^{j\circ}\}_{j>0} \xrightarrow{p^n} \{A^{j\circ}\}_{j>0} \to \{A^{j\circ}/(p^n)\}_{j>0} \to 0$. Then this induces an injective ring map
\begin{equation}\label{3131813N}
\big(\varprojlim_{j>0} A^{j\circ}\big)/(p^n) \hookrightarrow  \varprojlim_{j>0}\big(A^{j\circ}/(p^n)\big). 
\end{equation}
In view of (\ref{313MonN}), taking the inverse limit with respect to $n>0$ yields the composite ring map
\begin{equation}
\label{8291230}
\widehat{\varprojlim_{j>0} A^{j\circ}} \hookrightarrow  \varprojlim_{n>0}\varprojlim_{j>0}\big(A^{j\circ}/(p^n)\big) \xrightarrow{\cong} \varprojlim_{j>0} \widehat{A^{j\circ}},
\end{equation}
which gives an injective $A$-algebra map. Now we want to prove that $(\ref{8291230})$ is $(g)^{\frac{1}{p^\infty}}$-almost surjective. 
Note that the map $A\to \widehat{\varprojlim_{j>0} A^{j\circ}}$ extends to $\widehat{A}\to \widehat{\varprojlim_{j>0} A^{j\circ}}$ by the universality of completion (see \cite[Proposition 7.1.9 in Chapter 0]{FK18}). Hence $(\ref{8291230})$ yields the composite map of $A$-algebras
\begin{equation}
\label{82912302}
\widehat{A} \to \widehat{\varprojlim_{j>0} A^{j\circ}} \xrightarrow{(\ref{8291230})} \varprojlim_{j>0} \widehat{A^{j\circ}}. 
\end{equation}
On the other hand, by the universality of completion again, we have the commutative squares: 
\[\xymatrix{
A\ar[r]\ar[d]&A^{j\circ}\ar[d]\\
\widehat{A}\ar[r]&\widehat{A^{j\circ}}
}\]
($j> 0$) of which the bottom arrows are compatible with the inverse system $\{\widehat{A^{j\circ}}\}_{j> 0}$. Hence we obtain an $A$-algebra map 
\begin{equation}
\label{naturalringmapA}
\widehat{A}\to \varprojlim_{j\geq 0}\widehat{A^{j\circ}}, 
\end{equation}
which extends to the isomorphism $(\ref{completionflat1})$ as described in the proof of Theorem \ref{Hebbarkeits1}.
In particular, $(\ref{naturalringmapA})$ is $(g)^{\frac{1}{p^\infty}}$-almost surjective. 
Here, since $\varprojlim_{j>0} \widehat{A^{j\circ}}$ is $p$-adically separated, an extension of the map $A\to \varprojlim_{j>0} \widehat{A^{j\circ}}$ along the completion $A\to \widehat{A}$ is unique. 
Therefore, (\ref{82912302}) is identified with (\ref{naturalringmapA}). 
Thus we conclude that  $(\ref{82912302})$ and hence $(\ref{8291230})$ are $(g)^{\frac{1}{p^\infty}}$-almost surjective. 
This completes the proof of the assertion. 
\end{proof}

By combining Theorem \ref{RiemannExt} with Theorem \ref{RiemannExtadic}, we obtain the following corollary.

\begin{corollary}
\label{corWittZar}
Keep the notations and the hypotheses as in Theorem \ref{RiemannExt}. Suppose further that $A$ is $p$-adically Zariskian and integral over a Noetherian ring. 
Then we have the equality:
\begin{equation}
\label{fiberproduct1}
A_{A[\frac{1}{pg}]}^+=\Big\{x \in A[\frac{1}{pg}]~\Big|~\widetilde{\psi}(x) \in g^{-\frac{1}{p^\infty}}\mathcal{A}^\circ\Big\},
\end{equation}
where $\widetilde{\psi}: A[\frac{1}{pg}]\to \mathcal{A}[\frac{1}{g}]$ is the natural map. 
\end{corollary}
\begin{proof}
By Theorem \ref{RiemannExtadic} and Theorem \ref{RiemannExt} (a), we have the commutative diagram of rings: 
\[\xymatrix{
A^{+}_{A[\frac{1}{pg}]}\ar[r]^{\cong\ \ \ }&\varprojlim_{j>0}A^{j\circ}\ar[r]^{\cong\ \ \ \ \ \ \ }&A[\frac{1}{pg}]\times_{\mathcal{A}[\frac{1}{g}]}g^{-1/p^{\infty}}\mathcal{A}^{\circ}\ar[d]^{\pi_{1}}\\
A\ar@^{(->}[u]\ar@^{(->}[rr]&&A[\frac{1}{pg}]
}\]
where $\pi_{1}$ is the projection map. 
Since $\pi_{1}$ is injective and $\im (\pi_{1})=\{x \in A[\frac{1}{pg}]~\Big|~\widetilde{\psi}(x) \in g^{-\frac{1}{p^\infty}}\mathcal{A}^\circ\}$, 
the assertion follows. 
\end{proof}

\begin{Discussion}
Here is an alternative way to deduce Corollary \ref{corWittZar}. 
Since $\mathcal{A}^{j\circ}$ is completely integrally closed in $\mathcal{A}^{j\circ}[\frac{1}{pg}]$, it follows that the right-hand side of $(\ref{completionflat1})$ is completely integrally closed after inverting $pg$ by Lemma \ref{lem07032}. This implies that $g^{-\frac{1}{p^\infty}}\mathcal{A}^\circ$ is completely integrally closed after inverting $pg$. Thus, $A_{A[\frac{1}{pg}]}^+$ is contained in the right-hand side of $(\ref{fiberproduct1})$, and it remains to prove the other inclusion. Note that $A \to g^{-\frac{1}{p^\infty}}A$ is almost integral and $A \subset g^{-\frac{1}{p^\infty}}A \subset A_{A[\frac{1}{pg}]}^+$ by Proposition \ref{propIC=CIC}. So Corollary \ref{Cont2} gives us
\begin{equation}
\label{fiberproduct3}
A_{A[\frac{1}{pg}]}^+=\Big\{x \in C~\Big|~|x| \le 1;~\forall~|\cdot| \in \Val(C,D)_{|p|<1}\Big\}
\end{equation}
by setting $(C,D):=(g^{-\frac{1}{p^\infty}}A[\frac{1}{pg}],g^{-\frac{1}{p^\infty}}A)$, where $(g^{-\frac{1}{p^\infty}}A)[\frac{1}{p}]$ is equipped with the canonical structure as a Tate ring by declaring that $g^{-\frac{1}{p^\infty}}A$ is a ring of definition and the topology is $p$-adic. A result of Huber \cite[Proposition 3.9]{Hu93} asserts that\footnote{Notice that $(g^{-\frac{1}{p^\infty}}A)[\frac{1}{p}]$ may differ from $g^{-\frac{1}{p^\infty}}(A[\frac{1}{p}])$. But the former is contained in the latter and Lemma \ref{Lem0724} applies to claim that $g^{-\frac{1}{p^\infty}}A$ is an integrally closed subring of
$(g^{-\frac{1}{p^\infty}}A)[\frac{1}{p}]$.}
\begin{eqnarray}
\Val(C,D)_{|p|<1} \hookrightarrow \Spa\Big((g^{-\frac{1}{p^\infty}}A)[\frac{1}{p}],g^{-\frac{1}{p^\infty}}A\Big) \nonumber \\
\cong \Spa\Big((g^{-\frac{1}{p^\infty}}\widehat{A})[\frac{1}{p}],g^{-\frac{1}{p^\infty}}\widehat{A}\Big) \cong \Spa\Big((g^{-\frac{1}{p^\infty}}\mathcal{A^\circ})[\frac{1}{p}],g^{-\frac{1}{p^\infty}}\mathcal{A}^\circ\Big) \nonumber,
\end{eqnarray}
which shows that any $|\cdot| \in \Val(C,D)_{|p|<1}$ extends to an element $|\cdot| \in \Spa\Big((g^{-\frac{1}{p^\infty}}\mathcal{A^\circ})[\frac{1}{p}],g^{-\frac{1}{p^\infty}}\mathcal{A}^\circ\Big)$ for which we know $|x| \le 1$ for $x \in g^{-\frac{1}{p^\infty}}\mathcal{A}^\circ$. This fact combined with $(\ref{fiberproduct3})$ yields the following:
$$
A_{A[\frac{1}{pg}]}^+ \subset \Big\{x \in A[\frac{1}{pg}]~\Big|~\widetilde{\psi}(x) \in g^{-\frac{1}{p^\infty}}\mathcal{A}^\circ\Big\} \subset \Big\{x \in C~\Big|~|x| \le 1;~\forall~|\cdot| \in \Val(C,D)_{|p|<1}\Big\}=A_{A[\frac{1}{pg}]}^+,
$$
so that $(\ref{fiberproduct1})$ has been proved. 
\end{Discussion}

\begin{remark}
\begin{enumerate}
\item
Witt-perfect rings are almost never Noetherian and thus, it is natural to ask whether such algebras could be integral over a Noetherian ring. One way for constructing such an algebra over a Noetherian normal domain $R$ is to take the \textit{maximal \'etale extension} of $R$. The details are found in \cite{Sh18} and \cite{Sh19}. We will apply this method to construct almost Cohen-Macaulay algebras in \S\ref{SecAppWPA}.

\item
As we have seen so far, almost Witt-perfect rings play a centra role. So the next question naturally arises in view of the Scholze's crucial result that the structure presheaf of a perfectoid space is indeed a sheaf. Let $(A,A^+)$ be an affinoid Tate ring such that $A^+$ is almost Witt-perfect and completely integrally closed in $A$. Then is the pair $(A,A^+)$ sheafy, or is it stably uniform? Some relevant results are found in the papers \cite{BV18} and \cite{Mi16}. 
\end{enumerate}
\end{remark}

\subsection{Witt-perfect Abhyankar's lemma}

Now we are prepared to prove the main result, which is a variant of Andr\'e's Perfectoid Abhyankar's Lemma. Here is the statement of the above main theorem.

\begin{theorem}[Witt-perfect Abhyankar's lemma]
\label{PerAbhyankar}
Let $A$ be a $p$-torsion free algebra over a $p$-adically separated $p$-torsion free Witt-perfect valuation domain $V$ of rank $1$ admitting a compatible system of $p$-power roots $p^{\frac{1}{p^n}} \in V$, together with a regular element $g \in A$ admitting a compatible system of $p$-power roots $g^{\frac{1}{p^n}} \in A$. Suppose that the following conditions hold.
\begin{enumerate}
\item
$A$ is a $p$-adically Zariskian and normal ring.

\item
$A$ is a $(pg)^{\frac{1}{p^\infty}}$-almost Witt-perfect ring.

\item
$A$ is torsion free and integral over a Noetherian normal domain $R$ such that $g \in R$ and the height of the ideal $(p,g) \subset R$ is $2$.
\\

Let $A[\frac{1}{pg}] \hookrightarrow B'$ be a finite \'etale extension. 
Denote by $B:=(g^{-\frac{1}{p^{\infty}}}A)_{B'}^+$ the integral closure of $g^{-\frac{1}{p^{\infty}}}A$
in $B'$. Then the following statements hold:
\begin{enumerate}
\item
The Frobenius endomorphism $Frob:B/(p) \to B/(p)$ is $(pg)^{\frac{1}{p^\infty}}$-almost surjective and it induces an injection $B/(p^{\frac{1}{p}}) \hookrightarrow B/(p)$.

\item
The induced map $A/(p^m) \to B/(p^m)$ is $(pg)^{\frac{1}{p^\infty}}$-almost finite \'etale for all $m>0$.
\end{enumerate}
\end{enumerate}
\end{theorem}

We first prove the following preliminary result, which substantially contains the assertion (a) of the theorem.

\begin{proposition}
\label{preprop}
Keep the notation and the assumption as in Theorem \ref{PerAbhyankar}. 
Then the following assertions hold. 
\begin{enumerate}
\item
$B$ is the integral closure of $A$ in $B'$. 
\item
For every $j>0$, the following assertions hold (see Definition \ref{DefAj} for the notation). 
\begin{enumerate}
\item
Equip $B'$ with the canonical structure as a Tate ring that is module-finite over $A^{j}$ (as in Lemma \ref{tatefinex}). 
Then $B'=B^{j}$ as topological rings. 
In particular, the ring extension $A[\frac{1}{pg}]\hookrightarrow B'$ is identified with a continuous ring map
\begin{equation}
\label{AjBjnatural}
A^{j}\to B^{j}.
\end{equation}

\item
Equip $B'\otimes_{A[\frac{1}{pg}]}\mathcal{A}^j$ with the canonical structure as a Tate ring that is module-finite over $\mathcal{A}^j$. 
Then $B'\otimes_{A[\frac{1}{pg}]}\mathcal{A}^{j}\cong \mathcal{B}^{j}$ as topological rings. 
\item
$\mathcal{B}^j$ is a perfectoid $K$-algebra. 
\item
The restriction $A^{j\circ} \to B^{j\circ}$ of (\ref{AjBjnatural}) is integral. Moreover, it is $(p)^{\frac{1}{p^\infty}}$-almost finite \'etale, and $B^{j\circ}$ is a Witt-perfect $V$-algebra.
\end{enumerate}
\item
The natural ring map $B\to\varprojlim_{j>0}B^{j\circ}$ (cf.\ (\ref{Aj-inverse})) is an isomorphism. 
\item
The following assertions hold. In particular, $B$ satisfies the assumptions in Theorem \ref{RiemannExtadic} (with $t=p$), Proposition \ref{pg-approx} and Theorem \ref{RiemannExt}. 
\begin{enumerate}
\item
$B$ is $p$-adically Zariskian and integral over a Noetherian ring. 
\item
$(p,g)$ is a regular sequence on $B$. 
\item
$B$ is completely integrally closed in $B[\frac{1}{pg}]$.
\item
$B$ is a $(pg)^{\frac{1}{p^\infty}}$-almost Witt-perfect ring. 
\end{enumerate}
\item
The $p$-adic completion $\widehat{\varprojlim_{j>0}B^{j\circ}}$ is integral $(pg)^{\frac{1}{p^\infty}}$-almost perfectoid.\footnote{Since $A=g^{-\frac{1}{p^\infty}}A$  (cf.\ the proof of (1))  and $A$ is $(pg)^{\frac{1}{p^\infty}}$-almost Witt-perfect by assumption, the $p$-adic completion $\widehat{g^{-\frac{1}{p^\infty}}A}$ is an integral $(pg)^{\frac{1}{p^\infty}}$-almost perfectoid ring.
In \cite[Question 3.5.1]{An1}, a question is raised as to whether $g^{-\frac{1}{p^\infty}}\widehat{A}$ is integral perfectoid in the case when $A$ is actually Witt-perfect.} 
\end{enumerate}
\end{proposition}

\begin{proof}[Proof of Proposition \ref{preprop}]
(1): Since $A$ is normal and integral over a Noetherian ring, it is completely integrally closed in $A[\frac{1}{g}]$ in view of Proposition \ref{propIC=CIC}. 
Hence $A=g^{-\frac{1}{p^\infty}}A$ by Corollary \ref{lem07031}. Thus the assertion follows. 

(2): By Proposition \ref{pg-approx}, $A^{j}$ is preuniform. Hence the assertion (a) follows from Proposition \ref{RiemannFinEt}. 
In view of Proposition \ref{rationaluniformity}, to deduce the assertion (b), it suffices to show that $(p^j, g)$ forms a regular sequence on $A$ and $B$. 
Since $A$ is $p$-torsion free and $p$-adically Zariskian, any prime number in $A$ is a regular element. 
Thus, if the generic characteristic of $R$ is positive, then the $R$-algebra $A$ and the $A$-algebra $B$ are the zero rings, where $(p^j, g)$ forms a regular sequence. 
If the generic characteristic of $R$ is $0$, then $R$ is N-2 by \cite[Theorem 4.6.10]{Fo17}, and hence the assertion follows from Lemma \ref{PrepForPerAbhy3}, as desired. 
The assertion (c) follows from the assertion (b) and \cite[Theorem 7.9]{Sch12}.  
Finally, let us prove the assertion (d). By assumption, $A[\frac{p^{j}}{g}]$ and $B[\frac{p^{j}}{g}]$ are integral over a Noetherian ring. Moreover, the ring map $A[\frac{p^{j}}{g}]\to B[\frac{p^{j}}{g}]$ is integral. 
Hence by Proposition \ref{propIC=CIC}, 
$$
B^{j\circ}=\biggl(B[\frac{p^{j}}{g}]\biggr)^{+}_{B^{j}}=\biggl(A[\frac{p^{j}}{g}]\biggr)^{+}_{B^{j}}=\biggl(\biggl(A[\frac{p^{j}}{g}]\biggr)^{+}_{A^{j}}\biggr)^{+}_{B^{j}}=(A^{j\circ})^{+}_{B^{j}}. 
$$
Therefore, the first assertion follows. 
Thus, the second assertion follows from the almost purity theorem for Witt-perfect rings \cite[Theorem 5.2]{DK14} or \cite[Theorem 2.9]{DK15} (see \cite[Theorem 5.9]{NS19} for a conceptual proof).

(3): Since $B$ is integral over $A$ and $p\in A$ is contained in the Jacobson radical, it is also contained in the Jacobson radical of $B$. Moreover, $B$ is integral over a Noetherian normal domain $R$, and integrally closed in $B[\frac{1}{pg}]$. Hence we can apply Theorem \ref{RiemannExtadic} to $B$ and obtain the assertion.

(4): 
The assertions (a) and (b) have already been proved above. Since $B$ is integrally closed in $B[\frac{1}{pg}]$, the assertion (c) follows from Proposition \ref{propIC=CIC}. 
Let us show the assertion (d). To prove the almost Witt-perfectness, it suffices to check that the condition (1) in Definition \ref{AlmostWittRing} is satisfied because $B$ contains $p^{\frac{1}{p}}$. By applying \cite[Proposition 4.4.1]{An1} (see Corollary \ref{vanishinglimone} for a self-contained proof), for any fixed $r=\frac{n}{p}$ with $n \in \mathbb{N}$, we get a $(pg)^{\frac{1}{p^\infty}}$-almost isomorphism:
\begin{equation}
\label{Witt-perfect2}
{\varprojlim_j}^1 \big(B^{j\circ}/(p^r)\big) \approx 0.
\end{equation}
After applying $\varprojlim$ to the standard short exact sequence $0 \to B^{j\circ}/(p^{\frac{p-1}{p}}) \to B^{j\circ}/(p) \to B^{j\circ}/(p^{\frac{1}{p}}) \to 0$, the following $(pg)^{\frac{1}{p^\infty}}$-almost surjection follows from $(\ref{Witt-perfect2})$: 
\begin{equation}
\label{Witt-perfect3}
\varprojlim_j \big(B^{j\circ}/(p)\big) \to \varprojlim_j \big(B^{j\circ}/(p^{\frac{1}{p}})\big).
\end{equation}
By Witt-perfectness of $B^{j\circ}$, the Frobenius isomorphism $B^{j\circ}/(p^{\frac{1}{p}}) \cong B^{j\circ}/(p)$ yields a short exact sequence: $0 \to B^{j\circ}/(p^{\frac{1}{p}}) \to B^{j\circ}/(p) \xrightarrow{Frob} B^{j\circ}/(p) \to 0$. Then again by $(\ref{Witt-perfect2})$, we get
\begin{equation}
\label{Witt-perfect4}
\varprojlim_j \big(B^{j\circ}/(p)\big) \xrightarrow{Frob} \varprojlim_j \big(B^{j\circ}/(p)\big)~\mbox{is}~(pg)^{\frac{1}{p^\infty}}\mbox{-almost surjective}. 
\end{equation}
Consider the commutative diagram
$$
\begin{CD}
\varprojlim_j \big(B^{j\circ}/(p)\big) @>Frob>> \varprojlim_j \big(B^{j\circ}/(p)\big) \\
@AAA @AAA \\
\big(\varprojlim_j B^{j\circ}\big)/(p) @>Frob>> \big(\varprojlim_j B^{j\circ}\big)/(p) .\\
\end{CD}
$$
It suffices to show that in view of $(\ref{Witt-perfect4})$ that
\begin{equation}
\label{Witt-perfectinjective}
\big(\varprojlim_j B^{j\circ}\big)/(p) \to \varprojlim_j \big(B^{j\circ}/(p)\big)~\mbox{is a}~(pg)^{\frac{1}{p^\infty}}\mbox{-almost isomorphism}.
\end{equation}
By taking the inverse limits over $j$ to the short exact sequence: $0 \to B^{j\circ} \xrightarrow{p} B^{j\circ} \to B^{j\circ}/(p) \to 0$, we see that the map in $(\ref{Witt-perfectinjective})$ is injective. On the other hand, the above map is $(pg)^{\frac{1}{p^\infty}}$-almost surjective by applying the almost surjectivity of $(\ref{Witt-perfect3})$ to \cite[Proposition 4.3.1 and Remarque 4.3.1]{An1}, which shows that the Frobenius endomorphism on $(\varprojlim_{j}{B^{j\circ}})/(p)$ is $(pg)^{\frac{1}{p^\infty}}$-almost surjective. 
Hence by the assertion (3), we obtain the desired consequence. 

(5): It follows from the assertions (3) and (4). 
\end{proof}

Let us complete the proof of Theorem \ref{PerAbhyankar}.

\begin{proof}[Proof of Theorem \ref{PerAbhyankar}]
The assertion (a) follows from the assertions (c) and (d) of Proposition \ref{preprop} (4). Let us prove the assertion (b). 
We fix the notation as in Proposition \ref{preprop}. 
Let us make a reduction by using Galois theory of rings. By decomposing $A$ into the direct product of rings, we may assume and do that $A[\frac{1}{pg}] \to B'$ is finite \'etale of constant rank (indeed, one can check the conditions $(1) \sim (4)$ remain to hold for each direct factor of the ring $A$). By Lemma \ref{AlmostG-GaloisExt4} applied to the finite \'etale extension $A[\frac{1}{pg}] \hookrightarrow B'=B[\frac{1}{pg}]$, there is the  decomposition
\begin{equation}
\label{Galoisext}
A[\frac{1}{pg}] \hookrightarrow  B'=B[\frac{1}{pg}] \hookrightarrow C',
\end{equation}
where $A[\frac{1}{pg}] \to C'$ and $B'=B[\frac{1}{pg}] \to C'$ are Galois coverings. Let $G$ be the Galois group for $A[\frac{1}{pg}] \to C'$ and let
$H \subset G$ be the Galois subgroup for $B' \to C'$. Notice that $G$ is finite. Let $C$ be the integral closure of $A$ in $C'$. 
Notice that one can apply Proposition \ref{preprop} to $A$ and $C$ by taking $B'=A[\frac{1}{pg}]$ and $B'=C'$, respectively (cf.\ Proposition \ref{preprop} (1)). 
We will use the consequences of this fact without explicit mention in what follows. 

In view of (\ref{Aj-inverse}) and Proposition \ref{preprop} (2)-(a), we obtain the following commutative diagram:
$$
\begin{CD}
A @>>> A^{j+1\circ} @>>> B^{j+1\circ} @>>> B' \\
@| @VVV @VVV @| \\
A @>>> A^{j\circ} @>>> B^{j\circ} @>>> B'.\\
\end{CD}
$$
Taking inverse limits, we have composite map of rings: 
\begin{equation}
\label{composition}
A \cong \widetilde{A}:=\varprojlim_j A^{j\circ} \to \widetilde{B}:=\varprojlim_j B^{j\circ} \to B',
\end{equation}
where the first isomorphism is due to Theorem \ref{RiemannExtadic}. Similarly, we obtain the compositions of ring maps:
\begin{equation}
\label{inverselimitC}
A \cong\widetilde{A} \to \widetilde{C}:=\varprojlim_j C^{j\circ} \to C'.
\end{equation} 
By Proposition \ref{preprop} (3), we find that (\ref{composition}) and (\ref{inverselimitC}) are isomorphic to the integral maps $A\to B$ and $A \to C$, respectively. Following the convention in Definition \ref{DefAj}, we set
$$
\mathcal{B}:=\widehat{B}[\frac{1}{p}]~\mbox{and}~\mathcal{C}:=\widehat{C}[\frac{1}{p}].
$$

Our goal is the following:

\begin{enumerate}
\item[$\bullet$]
Recall that $A[\frac{1}{pg}] \to C'$ is a $G$-Galois covering and $B'=B[\frac{1}{pg}] \to C'$ is a $H$-Galois covering. Fix an integer $m>0$. Then we will establish that $A/(p^m) \to C/(p^m)$ is a $(pg)^{\frac{1}{p^\infty}}$-almost $G$-Galois covering, and $B/(p^m) \to C/(p^m)$ is a $(pg)^{\frac{1}{p^\infty}}$-almost $H$-Galois covering. Using these facts combined with Proposition \ref{AlmostG-GaloisExt3} (3), we deduce that $A/(p^m) \to B/(p^m)$ is $(pg)^{\frac{1}{p^\infty}}$-almost finite \'etale. As we can treat $A \to C$ and $B \to C$ in a complete parallel manner in view of Proposition \ref{preprop}, we consider only the case $A \to C$ in what follows. We use the notation $A \to C$ and $\widetilde{A} \to \widetilde{C}$ interchangeably.
\end{enumerate}

As $A[\frac{1}{pg}] \to C'$ is a $G$-Galois covering, so is $\mathcal{A}^j\to \mathcal{C}^j$ in view of \cite[Lemma 12.2.7]{Fo17}. Let $\widehat{C^{j\circ}}$ be the $p$-adic completion of $C^{j\circ}$. 
Since $C^{j\circ}[\frac{1}{p}]=C'$, there is a natural $\mathcal{A}^j$-algebra map
\begin{equation}\label{NatRingMap1}
\mathcal{C}^j=C' \otimes_{A[\frac{1}{pg}]} \mathcal{A}^j \to \widehat{C^{j\circ}}[\frac{1}{p}]. 
\end{equation}
Since $\widehat{A^{j\circ}}\cong\mathcal{A}^{j\circ}$ by Proposition \ref{pg-approx} (4), the map (\ref{NatRingMap1}) is an isomorphism, which induces $\mathcal{C}^{j\circ} \cong \widehat{C^{j\circ}}$ in view of \cite[Corollary 4.10]{NS19}.
Thus, $G$ acts on $\widehat{C^{j\circ}}$ and
\begin{equation}\label{GaloisCoh2}
(\widehat{C^{j\circ}})^G \cong(\mathcal{C}^{j\circ})^G\cong\mathcal{A}^{j\circ}
\end{equation}
by applying Lemma \ref{GalPwrBdd} or Discussion \ref{Galoisvanishing} (1) below. In particular, $\mathcal{A}^{j\circ} \to \widehat{C^{j\circ}}$ is an integral extension. In summary,
\begin{equation}
\label{completionGalois}
\mathcal{A}^{j\circ} \to \mathcal{C}^{j\circ} \cong \widehat{C^{j\circ}}~\mbox{is}~(p)^{\frac{1}{p^\infty}}\mbox{-almost \'etale and}~\mathcal{A}^j \to \mathcal{C}^j \cong \widehat{C^{j\circ}}[\frac{1}{p}]~\mbox{is a}~G\mbox{-Galois covering}.
\end{equation}

To finish the proof, let us apply the proof of \cite[Proposition 5.2.3]{An1} via Galois theory of commutative rings to $(\ref{completionGalois})$. We refer the reader to \cite[(5.6), (5.7), (5.8), (5.9) and (5.10) of Proposition 5.2.3]{An1} for the following discussions. 

After invoking the notation $(\ref{composition})$ and $(\ref{inverselimitC})$, there follows the following $(g)^{\frac{1}{p^\infty}}$-almost isomorphisms by applying Theorem \ref{RiemannExt} (b) to $A^j$ (resp. $C^j$): 
\begin{equation}
\label{inverselimitcompletion}
 \widehat{\widetilde{A}} \approx \mathcal{A}^\circ ~(\mbox{resp}.~
\widehat{\widetilde{C}} \approx \mathcal{C}^\circ).
\end{equation}
Indeed, this is checked by a chain of almost isomorphisms:
$$
\widehat{\widetilde{A}} \cong  \widehat{\varprojlim_j A^{j\circ}} \approx \varprojlim_j \widehat{A^{j\circ}} \cong g^{-\frac{1}{p^\infty}}\mathcal{A}^\circ \approx \mathcal{A}^\circ.
$$
Here, the first isomorphism is the $p$-adic completion of the isomorphism from
Theorem \ref{RiemannExtadic}, and the last second isomorphism is due to Riemann's extension theorem \cite[Th\'eor\`eme 4.2.2]{An1} (see Theorem \ref{Hebbarkeits1} for a self-contained proof). The same reasoning applies to deduce $\widehat{\widetilde{C}} \approx \mathcal{C}^\circ$.


In view of $(\ref{completionGalois})$ and applying \cite[Proposition 3.3.4]{An1}, the ring map
\begin{equation}
\label{GaloisExtension2}
\mathcal{C}^{j\circ} \widehat{\otimes}_{\mathcal{A}^{j\circ}} \mathcal{C}^{j\circ} \to \prod_G \mathcal{C}^{j\circ}~\mbox{defined by}~b \otimes b' \mapsto \big(\gamma(b)b'\big)_{\gamma \in G}
\end{equation}
is a $(p)^{\frac{1}{p^\infty}}$-almost isomorphism, where the completed tensor product is $p$-adic. By \cite[Proposition 4.4.4]{An1}, we have $\mathcal{C}\{\frac{p^j}{g}\} \cong \mathcal{C}^j$ and $\mathcal{C}$ is an $\mathcal{A}$-algebra. Using this, we obtain
$$
\big(\mathcal{C} \widehat{\otimes}_{\mathcal{A}} \mathcal{C}\big)\{\frac{p^j}{g}\} \cong \mathcal{C} \widehat{\otimes}_{\mathcal{A}} \mathcal{C} \widehat{\otimes}_{\mathcal{A}} \mathcal{A}^j \cong \big(\mathcal{C} \widehat{\otimes}_{\mathcal{A}} \mathcal{A}^j\big) \otimes_{\mathcal{A}^j} \big(\mathcal{C} \widehat{\otimes}_{\mathcal{A}} \mathcal{A}^j\big) 
\cong \mathcal{C}\{\frac{p^j}{g}\} \otimes_{\mathcal{A}^j} \mathcal{C}\{\frac{p^j}{g}\}\cong \mathcal{C}^j \otimes_{\mathcal{A}^j}\mathcal{C}^j.
$$
By Riemann's extension theorem \cite[Th\'eor\`eme 4.2.2]{An1}  (see also Theorem \ref{Hebbarkeits1}) and by \cite[Proposition 3.3.4]{An1},  we have $(pg)^{\frac{1}{p^\infty}}$-almost isomorphisms:
\begin{equation}
\label{GaloisExtension3}
\varprojlim_j \big(\mathcal{C}^{j\circ} \widehat{\otimes}_{\mathcal{A}^{j\circ}}\mathcal{C}^{j\circ}\big) \approx
\varprojlim_j \big(\mathcal{C}^j \otimes_{\mathcal{A}^j}\mathcal{C}^j\big)^{\circ} \cong \varprojlim_j \big(\mathcal{C} \widehat{\otimes}_{\mathcal{A}} \mathcal{C}\big)\{\frac{p^j}{g}\}^{\circ} \approx \big(\mathcal{C} \widehat{\otimes}_{\mathcal{A}}\mathcal{C}\big)^\circ \approx \mathcal{C}^\circ \widehat{\otimes}_{\mathcal{A}^\circ} \mathcal{C}^\circ.
\end{equation}
Putting $(\ref{GaloisExtension2})$ and $(\ref{GaloisExtension3})$ together, we obtain the following $(pg)^{\frac{1}{p^\infty}}$-almost isomorphism:
\begin{equation}
\label{GaloisExtension4}
\mathcal{C}^\circ \widehat{\otimes}_{\mathcal{A}^\circ} \mathcal{C}^\circ\approx \prod_G \mathcal{C}^\circ.
\end{equation}

By Discussion \ref{Galoisvanishing} (2), we know that $\mathcal{A}^\circ/(p^m) \to \big(\mathcal{C}^\circ/(p^m)\big)^G$ is a $(pg)^{\frac{1}{p^\infty}}$-almost isomorphism for any $m>0$. So this fact combined with the $(g)^{\frac{1}{p^\infty}}$-almost isomorphisms $(\ref{inverselimitcompletion})$ and $(\ref{GaloisExtension4})$ modulo $p^m$ yields that the induced map: $\widetilde{A}/(p^m) \to \widetilde{C}/(p^m)$ is a $(pg)^{\frac{1}{p^\infty}}$-almost $G$-Galois covering. This map factors as $\widetilde{A}/(p^m) \to \widetilde{B}/(p^m) \to  \widetilde{C}/(p^m)$. It then follows from Proposition \ref{AlmostG-GaloisExt3} (3) that $\widetilde{A}/(p^m) \to \widetilde{B}/(p^m)$ is $(pg)^{\frac{1}{p^\infty}}$-almost finite \'etale, as desired. This completes the proof of the theorem.
\end{proof}

\begin{Discussion}
\label{Galoisvanishing}
\begin{enumerate}
\item
Here is a way to check the isomorphism: $(\widehat{C^{j\circ}})^G \cong \mathcal{A}^{j\circ}$ that appears in $(\ref{GaloisCoh2})$. Since inverse limits commutes with taking $G$-invariants and $\widehat{A^{j\circ}} \cong \mathcal{A}^{j\circ}$ by Proposition \ref{pg-approx}, we have
\begin{equation}
\label{GaloisCoh}
(\widehat{C^{j\circ}})^G \cong \big(\varprojlim_m C^{j\circ}/(p^m)\big)^G \cong \varprojlim_m \big(C^{j\circ}/(p^m)\big)^G \approx \varprojlim_m \big((C^{j\circ})^G/(p^m)\big) \cong \varprojlim_m A^{j\circ}/(p^m) \cong \mathcal{A}^{j\circ},
\end{equation}
where $\approx$ in the middle denotes a $(p)^{\frac{1}{p^\infty}}$-almost isomorphism and we reason this as follows: Consider the short exact sequence $0 \to C^{j\circ} \xrightarrow{p^m} C^{j\circ} \to C^{j\circ}/(p^m) \to 0$. Applying the Galois cohomology $H^i(G,~)$ to this exact sequence, we get an injection $(C^{j\circ})^G/(p^m) \hookrightarrow \big(C^{j\circ}/(p^m)\big)^G$ whose cokernel embeds into $H^1(G,C^{j\circ})$. By applying \cite[Theorem 2.4]{Fa88} or \cite[Proposition 3.4]{Ol09}, $H^1(G,C^{j\circ})$ is $(p)^{\frac{1}{p^\infty}}$-almost zero. Hence $(\ref{GaloisCoh})$ is proved. $\widehat{C^{j\circ}}$ is completely integrally closed in $\widehat{C^{j\circ}}[\frac{1}{p}]$ by Lemma \ref{p-adicnormal}. Then we have $\widehat{C^{j\circ}} \cong p^{-\frac{1}{p^\infty}}(\widehat{C^{j\circ}})$ and $p^{-\frac{1}{p^\infty}}(\mathcal{A}^{j\circ}) \cong \mathcal{A}^{j\circ}$ by Lemma \ref{lem07031}. Since the functor $p^{-\frac{1}{p^\infty}}(~)$ commutes with taking $G$-invariants, $(\ref{GaloisCoh})$ yields an (honest) isomorphism:
$$
(\widehat{C^{j\circ}})^G \cong \big(p^{-\frac{1}{p^\infty}}(\widehat{C^{j\circ}})\big)^G \cong p^{-\frac{1}{p^\infty}}\big((\widehat{C^{j\circ}})^G\big) \cong
p^{-\frac{1}{p^\infty}}(\mathcal{A}^{j\circ}) \cong \mathcal{A}^{j\circ},
$$
which proves $(\ref{GaloisCoh2})$.

\item
Using $(\ref{GaloisCoh2})$, let us prove that the map
$$
\mathcal{A}^{\circ}/(p^m) \to\big(\mathcal{C}^{\circ}/(p^m)\big)^G
$$
is a $(pg)^{\frac{1}{p^\infty}}$-almost isomorphism for any integer $m>0$. We have already seen the $(pg)^{\frac{1}{p^\infty}}$-almost isomorphisms: $\mathcal{A}^{j\circ}/(p^m) \approx (\mathcal{C}^{j\circ})^G/(p^m) \approx \big(\mathcal{C}^{j\circ}/(p^m)\big)^G$. Taking the inverse limits $j \to \infty$ and using \cite[Proposition 4.2.1]{An1} or the $(g)^{\frac{1}{p^\infty}}$-almost isomorphism $(\ref{inverselimitiso})$, we get $(pg)^{\frac{1}{p^\infty}}$-almost isomorphisms:
$$
\mathcal{A}^\circ/(p^m) \approx \varprojlim_{j>0} \big(\mathcal{C}^{j\circ}/(p^m)\big)^G \cong \big(\varprojlim_{j>0} \mathcal{C}^{j\circ}/(p^m)\big)^G \approx \big(\mathcal{C}^\circ/(p^m)\big)^G,
$$
as wanted.

\end{enumerate}
\end{Discussion}

\begin{Problem}
Does Theorem \ref{PerAbhyankar} hold true under the more general assumption that $A$ is not necessarily integral over a Noetherian ring?
\end{Problem}

This problem is related to a possible generalization of Riemann's extension theorem (see Theorem \ref{RiemannExtadic} and Theorem \ref{RiemannExt}) for Witt-perfect rings of general type.

\section{Applications of Witt-perfect Abhyankar's lemma}\label{SecAppWPA}

\subsection{A construction of almost Cohen-Macaulay algebras}

Before proving the main theorem for this section, we recall the definition of big Cohen-Macaulay algebras, due to Hochster.

\begin{definition}[Big Cohen-Macaulay algebra]
\label{BigMac}
Let $(R,\fm)$ be a Noetherian local ring of dimension $d>0$ and let $T$ be an $R$-algebra. Then $T$ is a \textit{big Cohen-Macaulay $R$-algebra}, if there is a system of parameters $x_1,\ldots,x_d$ such that $x_1,\ldots,x_d$ is a regular sequence on $T$ and $(x_1,\ldots,x_d)T \ne T$. Moreover, we say that a big Cohen-Macaulay algebra is \textit{balanced}, if every system of parameters satisfies the above conditions.
\end{definition}

We also recall the definition of almost Cohen-Macaulay algebras from \cite[Definition 4.1.1]{An2}. Refer the reader to \cite[Proposition 2.5.1]{An181} for a subtle point on this definition.

\begin{definition}[Almost Cohen-Macaulay algebra]
\label{DefAlmost}
Let $(R,\fm)$ be a Noetherian local ring of dimension $d>0$, and let $(T,I)$ be a basic setup equipped with an $R$-algebra structure. Fix a system of parameters $x_1,\ldots,x_d$. We say that $T$ is \textit{$I$-almost Cohen-Macaulay with respect to $x_1,\ldots,x_d$}, if $T/\fm T$ is not $I$-almost zero and
$$
c \cdot \big((x_1,\ldots,x_i):_T x_{i+1}\big) \subset (x_1,\ldots,x_i)T
$$
for any $c \in I$ and $i=0,\ldots,d-1$.
\end{definition}

It is important to keep in mind that the permutation of the sequence $x_1,\ldots,x_d$ in the above definition may fail to form an almost regular sequence. We consider the sequence $p,x_2,\ldots,x_d$ for the main theorem below.
\\

$\textbf{Andr\'e's construction}$: For the applications given below, we take $I$ to be the ideal $\bigcup_{n>0} \varpi^{\frac{1}{p^n}}T$ as the basic setup $(T,I)$ for some regular element $\varpi \in R$. Following \cite{An2}, we introduce some auxiliary algebras. Let $W(k)$ be the ring of Witt vectors for a perfect field $k$ of characteristic $p>0$ and let
$$
A:=W(k)[[x_2,\ldots,x_d]]
$$
be an unramified complete regular local ring and $V_j:=W(k)[p^{\frac{1}{p^j}}]$. Then $V_j$ is a complete discrete valuation ring and set $V_{\infty}:=\varinjlim_j V_j$. Then this is a Witt-perfect valuation domain. For a fixed element $0 \ne g \in A$, we set
$$
B_{jk}:=V_j[[x^{\frac{1}{p^j}}_2,\ldots,x^{\frac{1}{p^j}}_d]][g^{\frac{1}{p^k}}][\frac{1}{p}]:=\Big(V_j[[x^{\frac{1}{p^j}}_2,\ldots,x^{\frac{1}{p^j}}_d]][T]/(T^{p^k}-g)\Big)[\frac{1}{p}]
$$
for any pair of non-negative integers $(j,k)$. For any pairs $(j,k)$ and $(j',k')$ with $j \le j'$ and $k \le k'$, we can define the natural map $B_{jk} \to B_{j'k'}$. Let us define the $A$-algebra $A_{jk}$ to be the integral closure of $A$ in $B_{jk}$. Let us also define
\begin{equation}
\label{bigalgebra1}
A_{\infty\infty}:=\varinjlim_{j,k} A_{jk}~\mbox{and}~A_{\infty g}:=\mbox{the integral closure of}~A_{\infty\infty}~\mbox{in}~A_{\infty\infty}[\frac{1}{pg}].
\end{equation}
For brevity, let us write
\begin{equation}
\label{bigalgebra2}
A_{\infty}:=A_{\infty0}:=\varinjlim_jV_j[[x^{\frac{1}{p^j}}_2,\ldots,x^{\frac{1}{p^j}}_d]].
\end{equation}
Then we have a tower of integral ring maps:
$$
A \to A_{\infty} \to A_{\infty\infty} \to A_{\infty g}.
$$

\begin{lemma}
\label{separatedlemma}
Let $R$ be a Noetherian domain with a proper ideal $I$ and let $T$ be a normal ring that is a torsion free integral extension of $R$. Assume that  $\varpi \in I$ is a nonzero element such that $T$ admits a compatible system of $p$-power roots
$\varpi^{\frac{1}{p^n}}$. Then $T/IT$ is not $(\varpi^{\frac{1}{p^\infty}})$-almost zero.
\end{lemma}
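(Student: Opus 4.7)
The plan is to argue by contradiction via a rank-one valuation. Suppose $T/IT$ is $(\pi^{\frac{1}{p^\infty}})$-almost zero; applying this to the class of $1\in T$ gives $\pi^{\frac{1}{p^n}}\in IT$ for every $n>0$. Since $R$ is Noetherian, $I=(c_1,\ldots,c_r)$ is finitely generated, so for each $n$ we have a presentation $\pi^{\frac{1}{p^n}}=\sum_{i=1}^{r} c_i t_{i,n}$ with $t_{i,n}\in T$. I will produce a rank-one valuation $\bar v$ on a suitable overfield with $\bar v\ge 0$ on $T$ and $\bar v(c_i)>0$ for every $i$, and then compare the two sides of this relation.

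For the first stage, classical Rees valuation theory (cf.\ \cite[Chapter 10]{SH06}) attached to the proper ideal $I$ in the Noetherian domain $R$ furnishes a rank-one discrete valuation $v$ on $K:=\Frac(R)$ with $v\ge 0$ on $R$ and $v(I)>0$; in particular $v(c_i)>0$ for each $i$ and $v(\pi)>0$. For the second stage, since $T$ is reduced (being normal) and $\pi\ne 0$ in $R\subset T$, there exists a minimal prime $\fQ$ of $T$ not containing $\pi$; the $R$-torsion-freeness of $T$ forces $\fQ\cap R=0$, so $T':=T/\fQ$ is a domain integral over $R$ with fraction field $L$ algebraic over $K$. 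By Chevalley's extension theorem, $v$ extends to a valuation $\bar v$ on $L$; because $L/K$ is algebraic, the value group of $\bar v$ lies in $\mathbb{Q}\otimes_{\mathbb{Z}}\Gamma_v\subset\mathbb{R}$, so $\bar v$ is again rank one. The valuation ring $V_{\bar v}$ contains $V_v$ and is integrally closed in $L$, and $T'$ is integral over $R\subset V_v$, so $T'\subset V_{\bar v}$, i.e., $\bar v\ge 0$ on $T'$.

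Reducing the relation $\pi^{\frac{1}{p^n}}=\sum_i c_i t_{i,n}$ modulo $\fQ$ and applying $\bar v$ yields
\[
\bar v(\pi^{\frac{1}{p^n}})\ \ge\ \min_i\!\bigl(\bar v(c_i)+\bar v(t_{i,n})\bigr)\ \ge\ \min_i v(c_i) =: \epsilon > 0,
\]
because $\bar v(t_{i,n})\ge 0$ and $\bar v(c_i)=v(c_i)>0$. On the other hand, the image of $\pi^{\frac{1}{p^n}}$ in $T'$ is nonzero (its $p^n$-th power is $\pi\ne 0$), and rank-one-ness of $\bar v$ forces $\bar v(\pi^{\frac{1}{p^n}})=v(\pi)/p^n$, which tends to $0$ as $n\to\infty$. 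This contradicts $\bar v(\pi^{\frac{1}{p^n}})\ge\epsilon$ for all $n$, finishing the proof. The subtle point is the very first step: one must produce a rank-one valuation of $K$ whose center on $R$ contains the full ideal $I$, not merely a height-one prime containing $\pi$; a DVR coming naively from the normalization of $R$ (via Mori--Nagata) need not have this property when a minimal prime over $I$ has height exceeding one, and it is precisely the Rees valuations (or, equivalently, an Abhyankar/monomial construction after Cohen's structure theorem) that are tailored to remedy this.
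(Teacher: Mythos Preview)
Your proof is correct, but it takes a genuinely different route from the paper's. The paper localizes $T$ at a maximal ideal $\fm$ containing $IT$ to obtain a normal \emph{domain} $T_\fm$, then observes that $\pi^{1/p^n}\in IT_\fm$ implies, after raising to the $p^n$-th power, that $\pi\in I^{p^n}T_\fm$ for all $n$; it then invokes a Krull-intersection result for integral extensions of Noetherian local domains (\cite[Lemma 4.2]{Sh11}) to conclude $\pi\in\bigcap_n I^{p^n}T_\fm=0$. Your argument instead reduces to a domain by passing to a quotient by a minimal prime avoiding $\pi$, and then works directly with a rank-one valuation: a Rees valuation of $I$ on $\Frac(R)$, extended along the algebraic extension $\Frac(T')/\Frac(R)$, turns $\pi^{1/p^n}\in IT$ into the inequality $v(\pi)/p^n\ge\epsilon>0$, which is absurd. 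What you gain is self-containment---you avoid the black-box citation to \cite{Sh11} and make the separation mechanism explicit---at the cost of a longer argument; what the paper's approach gains is brevity and the observation that the exponent $p^n$ in $I^{p^n}$ is what drives the contradiction. The two arguments are morally the same at bottom (the cited Krull-intersection lemma is itself typically proved by a valuation argument), but your packaging via Rees valuations is a clean and instructive alternative.
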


\begin{proof}
In order to prove that $T/IT$ is not $(\varpi)^{\frac{1}{p^\infty}}$-almost zero, it suffices to prove that $T_{\fm}/IT_{\fm}$ is not $(\varpi)^{\frac{1}{p^\infty}}$-almost zero, where $\fm$ is any maximal ideal of $T$ containing $IT$, since $T_{\fm}/IT_{\fm}$ is the localization of $T/IT$.
Then $T_{\fm}$ is a normal domain that is an integral extension over the Noetherian domain $R_{\fm \cap R}$, in which $I$ is a proper ideal. To derive a contradiction, we suppose that $T_\fm/I T_\fm$ is $(\varpi^{\frac{1}{p^\infty}})$-almost zero. Notice that $T_\fm$ is contained in the absolute integral closure $(R_{\fm \cap R})^+$. In particular, it implies that
$$
(\varpi)^{\frac{1}{p^n}} \in  IT_\fm~\mbox{for all}~n>0.
$$
Raising $p^n$-th power on both sides, we get by \cite[Lemma 4.2]{Sh11};
$$
\varpi \in \bigcap_{n>0} I^{p^n}T_\fm=0,
$$
which is a contradiction.
\end{proof}

The big rings $A_{\infty\infty}$ and $A_{\infty g}$ enjoy the following desirable properties.

\begin{proposition}
\label{AlmostPerf}
Let the notation be as in $(\ref{bigalgebra1})$ and $(\ref{bigalgebra2})$. Then the following assertions hold:
\begin{enumerate}
\item
$A_{\infty}$ is completely integrally closed in its field of fractions. It is an integral and faithfully flat extension over $A$. Moreover, the localization map $A_{\infty}[\frac{1}{pg}] \to A_{\infty\infty}[\frac{1}{pg}]$ is ind-\'etale. Finally, $A_{\infty\infty}$ is a $(p)^{\frac{1}{p^\infty}}$-almost Cohen-Macaulay and Witt-perfect algebra.

\item
$A_{\infty g}$ is a $(g)^{\frac{1}{p^\infty}}$-almost Witt-perfect algebra over the Witt-perfect valuation domain $V_{\infty}$ such that $p^{\frac{1}{p^n}} \in V_{\infty}$ and $g^{\frac{1}{p^n}} \in A_{\infty g}$. Moreover, $A_{\infty g}$ is a $(pg)^{\frac{1}{p^\infty}}$-almost Cohen-Macaulay normal ring that is completely integrally closed in $A_{\infty g}[\frac{1}{pg}]$. In particular, the localization of $A_{\infty g}$ at any maximal ideal is a $(pg)^{\frac{1}{p^\infty}}$-almost Cohen-Macaulay normal domain.
\end{enumerate}
\end{proposition}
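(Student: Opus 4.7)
The plan is to work through the tower $A \to A_\infty \to A_{\infty\infty} \to A_{\infty g}$ and transport structural properties stage by stage. For part $(1)$, I first observe that each finite stage $A_{\infty,j}:=V_j[[x_2^{1/p^j},\ldots,x_d^{1/p^j}]]$ is a complete regular local ring with regular system of parameters $p^{1/p^j},x_2^{1/p^j},\ldots,x_d^{1/p^j}$ (hence a Noetherian normal domain), and $A\hookrightarrow A_{\infty,j}$ is free of rank $p^{jd}$ on the monomials $p^{a_0/p^j}\prod_i x_i^{a_i/p^j}$ with $0\le a_\ell<p^j$, in particular finite and faithfully flat. Taking the filtered colimit gives that $A_\infty$ is integral and faithfully flat over $A$. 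For complete integral closedness of $A_\infty$ in $\Frac(A_\infty)$, any element $b\in\Frac(A_\infty)$ almost integral over $A_\infty$ may be written in $\Frac(A_{\infty,j_0})$ for some $j_0$, with witness $c\in A_{\infty,j_0}$; using the identity $A_\infty\cap\Frac(A_{\infty,j_0})=A_{\infty,j_0}$ (a consequence of the integrality of $A_{\infty,j_0}\hookrightarrow A_{\infty,j}$ together with normality of $A_{\infty,j_0}$), all the elements $cb^n\in A_\infty$ must lie in $A_{\infty,j_0}$, and complete integral closedness of the Noetherian normal domain $A_{\infty,j_0}$ forces $b\in A_{\infty,j_0}\subset A_\infty$. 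For ind-\'etaleness, I would check that $A_{jk}[\frac{1}{pg}]=V_j[[x^{1/p^j}]][g^{1/p^k}][\frac{1}{pg}]$ by observing that $B_{jk}[\frac{1}{g}]$ is already module-finite (hence integral) over $A[\frac{1}{pg}]$, and then note that each transition $A_\infty[\frac{1}{pg}]\to A_\infty[g^{1/p^k}][\frac{1}{pg}]$ is finite \'etale because the derivative $p^kT^{p^k-1}$ of $T^{p^k}-g$ is invertible after inverting $pg$.

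For part $(2)$, the Witt-perfectness of $V_\infty=\varinjlim V_j$ as a rank-one valuation domain is immediate, since the $p$-adic completion $\widehat{V_\infty}$ is the valuation ring of the perfectoid field $\widehat{V_\infty}[\frac{1}{p}]$. Complete integral closedness of $A_{\infty g}$ in $A_{\infty g}[\frac{1}{pg}]$ follows from Proposition \ref{propIC=CIC} since $A_{\infty g}$ is integral over the Noetherian ring $A$ and integrally closed in $A_{\infty g}[\frac{1}{pg}]$ by construction; Lemma \ref{lem07031} then yields $g^{-\frac{1}{p^\infty}}A_{\infty g}=A_{\infty g}$. Normality of $A_{\infty g}$, and hence of its localizations at maximal ideals (giving normal domains locally), follows from the filtered-union structure $A_{\infty g}=\varinjlim A_{jk}$ with each $A_{jk}$ a normal integral extension of $A$; nontriviality of $A_{\infty g}/\fm A_{\infty g}$ needed at any maximal ideal is provided by Lemma \ref{separatedlemma}. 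For the $(g)^{\frac{1}{p^\infty}}$-almost Witt-perfect property, the mod-$p^2$ condition of Definition \ref{AlmostWittRing}(2) reduces to Frobenius almost-surjectivity by the trick: given $a\in A_{\infty g}$ and $n>0$, choose $a'$ with $(a')^p\equiv g^{1/p^n}a\pmod p$ and set $b:=p^{1/p}a'$, giving $b^p\equiv pg^{1/p^n}a\pmod{p^2}$ (using $p^{1/p}\in V_\infty\subset A_{\infty g}$). Frobenius almost-surjectivity on $A_{\infty g}/p$ is then obtained by passing to $\widehat{A_{\infty g}}$: the completion $\widehat{A_\infty}$ is integral perfectoid over $\widehat{V_\infty}$ (as noted after Proposition \ref{PerfWitt}), and adjoining $p$-power roots of $g$ followed by completion and integral closure yields that $\widehat{A_{\infty g}}$ is integral $(g)^{\frac{1}{p^\infty}}$-almost perfectoid; since $A_{\infty g}/p\cong\widehat{A_{\infty g}}/p$, the almost-surjectivity transfers back.

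The main obstacle is the $(pg)^{\frac{1}{p^\infty}}$-almost Cohen-Macaulay property of $A_{\infty g}$ with respect to $p,x_2,\ldots,x_d$. My plan is to exploit the perfectoid structure on $\widehat{A_{\infty g}}$ established above: once this is identified as an integral $(g)^{\frac{1}{p^\infty}}$-almost perfectoid $\widehat{V_\infty}$-algebra, the arguments of Andr\'e \cite{An2} (see also \cite{Sh18}) give that $p,x_2,\ldots,x_d$ acts $(pg)^{\frac{1}{p^\infty}}$-almost regularly on $\widehat{A_{\infty g}}$. Transferring back to $A_{\infty g}$ itself will use the isomorphisms $A_{\infty g}/p^n\cong\widehat{A_{\infty g}}/p^n$ together with $p$-torsion freeness of $A_{\infty g}$ to lift the colon-ideal almost-regularity conditions. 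The hardest technical point will be making this descent compatible with the almost-structure $(pg)^{\frac{1}{p^\infty}}$ rather than simply $(g)^{\frac{1}{p^\infty}}$: one must carefully track the interaction between $p$-adic completion and the $p$-factor in the almost-ideal, and verify that the colon-ideal condition in $\widehat{A_{\infty g}}$ descends through the inclusion $A_{\infty g}\hookrightarrow\widehat{A_{\infty g}}$ without losing the almost-structure.
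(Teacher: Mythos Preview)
Your treatment of part $(1)$ and the preliminary structural facts in part $(2)$ (complete integral closedness via Proposition \ref{propIC=CIC}, normality via the filtered-colimit description, nontriviality via Lemma \ref{separatedlemma}) are fine and essentially match the paper's argument.

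The genuine gap is in your handling of the almost Witt-perfect and almost Cohen--Macaulay properties of $A_{\infty g}$. You write that ``adjoining $p$-power roots of $g$ followed by completion and integral closure yields that $\widehat{A_{\infty g}}$ is integral $(g)^{\frac{1}{p^\infty}}$-almost perfectoid'' and that ``the arguments of Andr\'e \cite{An2} \ldots\ give that $p,x_2,\ldots,x_d$ acts $(pg)^{\frac{1}{p^\infty}}$-almost regularly on $\widehat{A_{\infty g}}$''. Neither step is justified. Andr\'e's result \cite[Th\'eor\`eme 2.5.2]{An2} establishes Witt-perfectness and $(p)^{\frac{1}{p^\infty}}$-almost Cohen--Macaulayness for $A_{\infty\infty}$, not for $A_{\infty g}$. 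The passage from $A_{\infty\infty}$ to its integral closure $A_{\infty g}$ inside the \emph{larger} ring $A_{\infty\infty}[\frac{1}{pg}]$ is the entire difficulty: integral closure and $p$-adic completion do not commute in general, and there is no a priori reason why the Frobenius almost-surjectivity or the almost-regular-sequence property should survive this operation. Your sketch does not supply a mechanism for this transfer.

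The paper's proof fills exactly this gap using the Witt-perfect Riemann's extension machinery developed earlier. Applying Theorem \ref{RiemannExt}(c) to $A_{\infty\infty}$ identifies $A_{\infty g}$ with $\widetilde{A}_{\infty\infty}=\varprojlim_j A_{\infty\infty}^{j\circ}$, and then Proposition \ref{RiemannExt2} together with \cite[Th\'eor\`eme 4.2.2]{An1} yields the key isomorphism
\[
\widehat{A_{\infty g}} \;\cong\; \widehat{\varprojlim_j A_{\infty\infty}^{j\circ}} \;\cong\; \varprojlim_j \widehat{A_{\infty\infty}^{j\circ}} \;\cong\; g^{-\frac{1}{p^\infty}}\widehat{A_{\infty\infty}}.
\]
Thus $\widehat{A_{\infty\infty}} \to \widehat{A_{\infty g}}$ is a $(g)^{\frac{1}{p^\infty}}$-almost isomorphism, and \emph{this} is what lets one transport Andr\'e's almost-perfectoid and almost Cohen--Macaulay properties from $A_{\infty\infty}$ to $A_{\infty g}$. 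Your proposal does not invoke Theorem \ref{RiemannExt} or Proposition \ref{RiemannExt2} at all, so the bridge between $A_{\infty\infty}$ and $A_{\infty g}$ is missing; without it the argument cannot be completed.
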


\begin{proof}
$(1)$: It is clear that $A \to A_{\infty}$ is integral by construction. Since $A_{\infty}$ is a filtered colimit of regular local subrings with module-finite transition maps, one readily checks that the transition map is flat and thus, $A \to A_{\infty}$ is faithfully flat. By Lemma \ref{completenormal}, $A_{\infty}$ is a completely integrally closed domain in its field of fractions. By looking at the discriminant, it is easy to check that $A_{\infty}[\frac{1}{pg}] \to A_{\infty\infty}[\frac{1}{pg}]$ is ind-\'etale. We claim that $A_{\infty\infty}$ is a $(p)^{\frac{1}{p^\infty}}$-almost Cohen-Macaulay and Witt-perfect algebra. To show that it is Witt-perfect, it suffices to show that the $p$-adic completion $\widehat{A_{\infty\infty}}$ is integral perfectoid. But this follows from the description appearing in \cite[Lemme 2.5.1]{An2} and a comment in \cite[Definition 2.2]{Bh18} together with an application of \cite[Corollary 3.6]{NS19} (where we should set $g=1$). In order to show that $A_{\infty\infty}$ is $(p)^{\frac{1}{p^\infty}}$-almost Cohen-Macaulay, since $A_{\infty\infty}$ is $p$-torsion free, it suffices to show that $x_2,\ldots,x_d$ is a $(p)^{\frac{1}{p^\infty}}$-almost regular sequence on $A_{\infty\infty}/(p)$ in view of Lemma \ref{separatedlemma}. By Andr\'e's crucial result \cite[Th\'eor\`eme 2.5.2]{An2},\footnote{This is known as \textit{Andr\'e's Flatness Lemma}. A similar construction also appears in \cite[Theorem 16.9.17]{GR18}, where they apply $p$-integral closure instead of integral closure. This makes it possible to get rid of ``$(p)^{\frac{1}{p^\infty}}$-almost" from the statement.} the induced map $A_\infty/(p) \to A_{\infty\infty}/(p)$ is $(p)^{\frac{1}{p^\infty}}$-almost faithfully flat. Hence the desired claim follows from this fact together with the fact that $p,x_2,\ldots,x_d$ is a regular sequence on $A_\infty$.

$(2)$: By the assertion $(1)$, $(p,g)$ is a $(p)^{\frac{1}{p^\infty}}$-almost regular sequence on $A_{\infty \infty}$. Next we study $A_{\infty g}$ and consider $\widetilde{A_{\infty\infty}}:=\varprojlim_j A^{j\circ}_{\infty\infty}$ attached to $A_{\infty\infty}$ as defined in $(\ref{Aj-inverse})$ (see also Theorem \ref{RiemannExt}). Then we claim that
\begin{equation}
\label{RiemannEq}
A_{\infty g} \cong \widetilde{A_{\infty\infty}}.
\end{equation}
Since $A_{\infty g}$ is integrally closed in $A_{\infty\infty}[\frac{1}{pg}]=A_{\infty g}[\frac{1}{pg}]$, it follows from Proposition \ref{propIC=CIC} that $A_{\infty g}$ is completely integrally closed in $A_{\infty\infty}[\frac{1}{pg}]$. Now by applying Theorem \ref{RiemannExtadic} to $A_{\infty\infty}$, the isomorphism $(\ref{RiemannEq})$ follows. By the construction $(\ref{bigalgebra1})$, $A_{\infty\infty}$ is integrally closed in $A_{\infty\infty}[\frac{1}{p}]$. It follows that $A_{\infty\infty}$ is completely integrally closed in $A_{\infty\infty}[\frac{1}{p}]$. Then Lemma \ref{p-adicnormal} applies to show that the $p$-adic completion $\widehat{A_{\infty\infty}}$ is completely integrally closed in $\widehat{A_{\infty\infty}}[\frac{1}{p}]$. Now we can apply Riemann's extension theorem \cite[Th\'eor\`eme 4.2.2]{An1} (see Theorem \ref{Hebbarkeits1} for a self-contained proof), together with $(\ref{RiemannEq})$, to get that
$$
g^{-\frac{1}{p^\infty}}\widehat{A_{\infty \infty}}  \cong \varprojlim_j \widehat{A^{j\circ}_{\infty\infty}} \approx \widehat{\varprojlim_j A^{j\circ}_{\infty\infty}} \cong \widehat{A_{\infty g}},
$$
where the middle map is a $(g)^{\frac{1}{p^\infty}}$-almost isomorphism due to Theorem \ref{RiemannExt}. In particular, $\widehat{A_{\infty \infty}} \to \widehat{A_{\infty g}}$ is a $(g)^{\frac{1}{p^\infty}}$-almost isomorphism. 

From the property of $A_{\infty \infty}$ mentioned in $(1)$, one finds that $\widehat{A_{\infty g}}$ is an integral $(g)^{\frac{1}{p^\infty}}$-almost perfectoid and $(pg)^{\frac{1}{p^\infty}}$-almost Cohen-Macaulay algebra. By the fact that $A_{\infty}[\frac{1}{pg}]$ is a normal domain and $A_{\infty}[\frac{1}{pg}] \to A_{\infty\infty}[\frac{1}{pg}]$ is obtained as a filtered colimit of finite \'etale $A_{\infty}[\frac{1}{pg}]$-algebras, we see that $A_{\infty\infty}[\frac{1}{pg}]$ is a normal ring;  the localization at any maximal ideal is an integrally closed domain by Lemma \ref{normallocaldomain}. Since $A_{\infty g}$ is integrally closed in $A_{\infty\infty}[\frac{1}{pg}]$, it follows that $A_{\infty g}$ is also normal.
\end{proof}

As a corollary, we obtain the following theorem.

\begin{theorem}
\label{AlmostCMalg}
Let $(R,\fm)$ be a complete Noetherian local domain of mixed characteristic $p>0$ with perfect residue field $k$. Let $p,x_2,\ldots,x_d$ be a system of parameters and let $R^+$ be the absolute integral closure of $R$. Then there exists an $R$-algebra $T$ together with a nonzero element $g \in R$ such that the following hold:
\begin{enumerate}
\item
$T$ admits compatible systems of $p$-power roots $p^{\frac{1}{p^n}}, g^{\frac{1}{p^n}} \in T$ for all $n>0$.

\item
The Frobenius endomorphism $Frob:T/(p) \to T/(p)$ is surjective.

\item
$T$ is a $(pg)^{\frac{1}{p^\infty}}$-almost Cohen-Macaulay normal domain with respect to $p,x_2,\ldots,x_d$ and $R \subset T \subset R^+$.

\item
The $p$-adic completion $\widehat{T}$ is integral perfectoid.

\item
$R[\frac{1}{pg}] \to T[\frac{1}{pg}]$ is an ind-\'etale extension. In other words, $T[\frac{1}{pg}]$ is a filtered colimit of finite \'etale $R[\frac{1}{pg}]$-algebras contained in $T[\frac{1}{pg}]$.
\end{enumerate}
\end{theorem}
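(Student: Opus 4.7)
The plan is to apply the Witt-perfect Abhyankar's Lemma (Theorem \ref{PerAbhyankar}) to Andr\'e's algebra $A_{\infty g}$ from Proposition \ref{AlmostPerf}, using the finite \'etale cover of $R[\frac{1}{pg}]$ arising from Cohen's structure theorem, and then to pass to a filtered colimit inside $R^+$ in order to upgrade almost Frobenius surjectivity to honest surjectivity. First, by Cohen's structure theorem, since $k$ is perfect and $p, x_2, \ldots, x_d$ is a system of parameters, $R$ is module-finite over the unramified complete regular local ring $A := W(k)[[x_2, \ldots, x_d]]$. I choose a nonzero $g \in A$, divisible by $x_2 \cdots x_d$ and by a generator of the discriminant of $A \hookrightarrow R$, so that both $A[\frac{1}{pg}] \hookrightarrow R[\frac{1}{pg}]$ is finite \'etale and $A[\frac{1}{pg}] \hookrightarrow A_{\infty g}[\frac{1}{pg}]$ is ind-\'etale. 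Proposition \ref{AlmostPerf} then guarantees that $A_{\infty g}$ is a $(pg)^{\frac{1}{p^\infty}}$-almost Witt-perfect, $(pg)^{\frac{1}{p^\infty}}$-almost Cohen--Macaulay normal algebra, completely integrally closed in $A_{\infty g}[\frac{1}{pg}]$, integral over $A$, on which $(p, g)$ is a regular sequence, containing compatible systems of $p$-power roots of $p$ and $g$. I fix an embedding $A_{\infty g} \hookrightarrow R^+$ compatible with chosen $p$-power roots of $p, g, x_2, \ldots, x_d$ inside $R^+$.

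For each finite \'etale extension $R[\frac{1}{pg}] \hookrightarrow C$ with $C \subset R^+[\frac{1}{pg}]$, the base change $B'_C := C \otimes_{A[\frac{1}{pg}]} A_{\infty g}[\frac{1}{pg}]$ is finite \'etale over $A_{\infty g}[\frac{1}{pg}]$, so all hypotheses of Theorem \ref{PerAbhyankar} apply to the pair $(A_{\infty g}, B'_C)$. The integral closure $B_C$ of $A_{\infty g}$ in $B'_C$ is therefore $(pg)^{\frac{1}{p^\infty}}$-almost finite \'etale over $A_{\infty g}$ modulo every power of $p$, and Frobenius on $B_C/(p)$ is $(pg)^{\frac{1}{p^\infty}}$-almost surjective; via the fixed embedding one has $R \subset B_C \subset R^+$. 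I then set
$$
T := \varinjlim_{C} B_C
$$
over the filtered system of all such $C$. Property (1) is inherited from $A_{\infty g} \subset T$; property (4) follows because $T[\frac{1}{pg}] \cong (\varinjlim_C C) \otimes_{A[\frac{1}{pg}]} A_{\infty g}[\frac{1}{pg}]$ is ind-\'etale over $R[\frac{1}{pg}]$ (by the choice of $g$); normality of $T$ comes from Lemma \ref{normallocaldomain} together with preservation of normality under filtered colimits along injective maps; and the $(pg)^{\frac{1}{p^\infty}}$-almost Cohen--Macaulay property in (3) is inherited from $A_{\infty g}$ via almost finite \'etale descent and passes to the filtered colimit. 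For (2), the point is that $p$-th roots of elements of $B_C$ exist in $R^+$ since $R^+$ is absolutely integrally closed, and in the colimit they are to be absorbed into some $B_{C'}$ coming from a larger $C'$ within which the required $p$-th roots become available.

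The main obstacle is property (2): promoting $(pg)^{\frac{1}{p^\infty}}$-almost Frobenius surjectivity to honest surjectivity without destroying the ind-\'etale condition in (4). The subtlety is that a $p$-th root $c$ of $b \in B_C$ taken in $R^+$ need not generate a finite \'etale extension of $R[\frac{1}{pg}]$, since $X^p - b$ is ramified along the zero locus of $b$. Resolving this requires combining the almost surjectivity output of Theorem \ref{PerAbhyankar} with normality of $T$ and a careful analysis of when the necessary $p$-th roots can be located inside the maximal ind-\'etale subextension of $R^+[\frac{1}{pg}]$; this is the most delicate point of the proof, and is where the full strength of Witt-perfect almost purity interacts nontrivially with the almost Cohen--Macaulay structure inherited from $A_{\infty g}$.
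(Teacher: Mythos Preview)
Your overall architecture is close to the paper's, but there is a genuine gap precisely where you flag it: property~(2). You take $T=\varinjlim_C B_C$ over finite \'etale extensions $C$ of $R[\frac{1}{pg}]$ and hope that honest $p$-th roots modulo $p$ can eventually be found inside some $B_{C'}$. As you note yourself, a root of $X^p-b$ for $b\in B_C$ generates an extension that is typically \emph{ramified} along $\{b=0\}$ even after inverting $pg$, so there is no reason such a root should appear in your filtered system. Almost Witt-perfectness of each $B_C$ does not help here: it gives $c_n\in B_C$ with $c_n^p\equiv (pg)^{1/p^n}b\pmod{p}$, but no mechanism to pass to the limit inside an ind-\'etale extension of $R[\frac{1}{pg}]$.

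The paper resolves (2) by a different and much cleaner device. After building (via Proposition~\ref{AlmostPerf} and Theorem~\ref{PerAbhyankar}) a $(pg)^{\frac{1}{p^\infty}}$-almost Cohen--Macaulay, $(pg)^{\frac{1}{p^\infty}}$-almost Witt-perfect normal domain $C$ containing $R$ (obtained by localizing at suitable maximal ideals to stay inside domains), one sets
\[
T:=\text{integral closure of }C\text{ in }C[\tfrac{1}{p}]^{\mathrm{\acute{e}t}},
\]
the maximal \'etale extension of $C[\frac{1}{p}]$ inside $C[\frac{1}{p}]^+$ --- inverting only $p$, not $g$. Then $T$ is honestly Witt-perfect by \cite[Lemma~5.1]{Sh18} (or \cite[Lemma~10.1]{Sh19}): for any $b\in T$ the polynomial $X^p-pX-b$ is \'etale over $T[\frac{1}{p}]$, hence its integral root already lies in $T$ by maximality, and this root is a $p$-th root of $b$ modulo $p$. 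This gives (2) for free, with no delicate analysis. Property~(4) then follows because $T[\frac{1}{pg}]$ is ind-\'etale over $(A_{\infty g})_\fn[\frac{1}{pg}]$, and the almost Cohen--Macaulay property~(3) is obtained by applying Theorem~\ref{PerAbhyankar} with $A=(A_{\infty g})_\fn$ to the ind-\'etale tower defining $T$. So the logical order is reversed from yours: Witt-perfectness comes from the construction, and almost Cohen--Macaulayness from Abhyankar's lemma applied afterward.
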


\begin{proof}
In the following, we may assume $\dim R \ge 2$ without loss of generality. By Cohen's structure theorem, there is a module-finite extension 
$$
A:=W(k)[[x_2,\ldots,x_d]] \hookrightarrow R.
$$
As the induced field extension $\Frac(A) \to \Frac(R)$ is separable, there is an element $g \in A \setminus pA$ such that $A[\frac{1}{pg}] \to R[\frac{1}{pg}]$ is \'etale. As in Proposition \ref{AlmostPerf}, we set
$$
A_{\infty}:=\bigcup_{n>0} W(k)[p^{\frac{1}{p^n}}][[x_2^{\frac{1}{p^n}},\ldots,x_d^{\frac{1}{p^n}}]].
$$
Now consider the integral extensions $A \to A_{\infty} \to A_{\infty\infty} \to A_{\infty g}$ as in Proposition \ref{AlmostPerf}. Let $\fn$ be a maximal ideal of $A_{\infty g}$. Then the localization $(A_{\infty g})_{\fn}$ is a normal domain that is an integral extension over $A$ and enjoys the same properties as $A_{\infty g}$. Since $(p,g)$ forms part of a system of parameters of $A$ and $(A_{\infty g})_{\fn}$ is a filtered colimit of module-finite normal $A$-algebras, it follows that $(p,g)$ is a regular sequence on $(A_{\infty g})_{\fn}$ by Serre's normality criterion.\footnote{In what follows, if necessary, we repeat the same argument for deriving the regularity of $(p,g)$ in order to apply Theorem \ref{PerAbhyankar}.} By base change, the map
\begin{equation}
\label{colimit}
(A_{\infty g})_{\fn}[\frac{1}{pg}] \to R \otimes_A (A_{\infty g})_{\fn}[\frac{1}{pg}]
\end{equation}
is finite \'etale. It follows from \cite[Tag 033C]{Stacks} together with the normality of $(A_{\infty g})_{\fn}[\frac{1}{pg}]$ that $R \otimes_A (A_{\infty g})_{\fn}[\frac{1}{pg}]$ is a normal ring. Letting the notation be as in $(\ref{colimit})$, set
$$
B:=\mbox{the integral closure of}~R~\mbox{in}~R \otimes_A (A_{\infty g})_{\fn}[\frac{1}{pg}].
$$
Let $K$ be the field of fractions of $(A_{\infty g})_{\fn}[\frac{1}{pg}]$ and let $K \to L$ be the corresponding base change of $(\ref{colimit})$. Then the finite \'etaleness of $(\ref{colimit})$ implies that $L$ is the total ring of fractions of $R \otimes_A (A_{\infty g})_{\fn}[\frac{1}{pg}]$, and $L$ is a finite direct product of fields that are finite separable over $K$. Since $R \otimes_A (A_{\infty g})_{\fn}[\frac{1}{pg}]$ is normal, it is integrally closed in $L$. Then $B$ is identified with the integral closure of $R$ in $L$. By Lemma \ref{normallocaldomain}, it follows that $B$ is a normal ring that fits into the commutative diagram:
$$
\begin{CD}
(A_{\infty g})_{\fn} @>>> B \\
@AAA @AAA \\ 
A @>>> R \\
\end{CD}
$$
in which every map is injective and integral. Let $\fn'$ be any maximal ideal of $B$. Since $A$ is a local domain and $A \to B$ is a torsion free integral extension, one finds that $A \cap \fn'$ is the unique maximal ideal of $A$ and the induced localization map $A \to B_{\fn'}$ is an injective integral extension between normal domains. By setting $A:=(A_{\infty g})_{\fn}$ in the notation of Theorem \ref{PerAbhyankar} and applying Lemma \ref{separatedlemma}, it follows that $B$ is a $(pg)^{\frac{1}{p^\infty}}$-almost Cohen-Macaulay normal ring with respect to $p,x_2,\ldots,x_d$ and $(pg)^{\frac{1}{p^\infty}}$-almost Witt-perfect. Since these properties are preserved under localization with respect to any maximal ideal, it follows that the normal domain $B_{\fn'}$ enjoys the same properties. 

To finish the proof, let us put $C:=B_{\fn'}$ for brevity of notation. Set
$$
T:=\mbox{the integral closure of}~C~\mbox{in}~C[\frac{1}{p}]^{\rm{\acute{e}t}},
$$
where $C[\frac{1}{p}]^{\rm{\acute{e}t}}$ is the maximal \'etale extension of $C[\frac{1}{p}]$ contained in the absolute integral closure $C[\frac{1}{p}]^+$. By using a direct limit argument combined with \cite[Tag 033C]{Stacks} and the fact that $C[\frac{1}{p}]^{\rm{\acute{e}t}}$ is a filtered colimit of finite \'etale $C[\frac{1}{p}]$-algebras,
we find that the normality of $C$ implies the normality of $C[\frac{1}{p}]^{\rm{\acute{e}t}}$. Since $T$ is integrally closed in $C[\frac{1}{p}]^{\rm{\acute{e}t}}$, $T$ is a normal domain. Thus, $T$ is a Witt-perfect normal domain in view of \cite[Lemma 5.1]{Sh18} or \cite[Lemma 10.1]{Sh19}. Using this, it can be seen that the $p$-adic completion $\widehat{T}$ is integral perfectoid. In other words, the Frobenius map on $\widehat{T}/(p)$ induces an isomorphism: $\widehat{T}/(p^\frac{1}{p}) \cong \widehat{T}/(p)$. Therefore, it remains to establish that $T$ is $(pg)^{\frac{1}{p^\infty}}$-almost Cohen-Macaulay with respect to $p,x_2,\ldots,x_d$. Let us note that the composite map
$$
(A_{\infty g})_{\fn}[\frac{1}{pg}] \to C[\frac{1}{pg}] \to T[\frac{1}{pg}]
$$
satisfies that $T[\frac{1}{pg}]$ is the filtered colimit of finite \'etale $(A_{\infty g})_{\fn}[\frac{1}{pg}]$-algebras. As $T$ is integrally closed in its field of fractions, the integral closure of $(A_{\infty g})_{\fn}$ in $T[\frac{1}{pg}]$ is the same as $T$. In the proof of Proposition \ref{AlmostPerf}, we showed that $\widehat{A}_{\infty \infty} \to \widehat{A}_{\infty g}$ is a $(g)^{\frac{1}{p^\infty}}$-almost isomorphism. Thus, $A_{\infty \infty}/(p) \to A_{\infty g}/(p)$ is a $(g)^{\frac{1}{p^\infty}}$-almost isomorphism. Summing up, we conclude from Theorem \ref{PerAbhyankar} applied to $A:=(A_{\infty g})_{\fn}$ that $T/(p)$ is the filtered colimit of $(pg)^{\frac{1}{p^\infty}}$-almost finite \'etale $A_{\infty\infty}/(p)$-algebras. By Lemma \ref{separatedlemma} and Proposition \ref{AlmostPerf}, $T$ is $(pg)^{\frac{1}{p^\infty}}$-almost Cohen-Macaulay.
\end{proof}

As a corollary, we obtain the following result, which is the strengthened version of the main results in \cite{HM18}. The proof uses standard results from the theory of local cohomology. For a Noetherian local ring $(R,\fm)$, let $H^i_\fm(M)$ be the $i$-th local cohomology module of an $R$-module $M$ with support at the maximal ideal $\fm$ of $R$.

\begin{corollary}
Let the notation and hypotheses be as in Theorem \ref{AlmostCMalg}. Then the local cohomology modules $H^i_{\fm}(T)$ are $(pg)^{\frac{1}{p^\infty}}$-almost zero in the range $0 \le i  \le \dim R-1$. In particular, the image of the map $H_\fm^i(T) \to H_\fm^i(R^+)$ induced by $T \to R^+$ is $(pg)^{\frac{1}{p^\infty}}$-almost zero.
\end{corollary}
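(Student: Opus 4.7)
The plan is to compute $H^i_\fm(T)$ using Koszul cohomology along $\underline{y} := (y_1, \ldots, y_d)$ with $y_1 = p$ and $y_j = x_j$ for $j \ge 2$. Since $(\underline{y})$ and $\fm$ have the same radical, one has $H^i_\fm(T) \cong \varinjlim_n H^i\bigl(K^\bullet(y_1^n, \ldots, y_d^n; T)\bigr)$. Write $I := \bigcup_{n>0} (pg)^{1/p^n} T$ for the idempotent ideal defining the almost structure; the idempotence $I \cdot I = I$ follows from $(pg)^{1/p^n} = (pg)^{1/p^{n+1}} \cdot (pg)^{(p-1)/p^{n+1}}$ with both factors in $I$. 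I plan to show that $H^i(K^\bullet(y_1^n, \ldots, y_d^n; T))$ is $I$-almost zero for every $n \ge 1$ and every $0 \le i < d$; since filtered colimits preserve $I$-almost vanishing, this yields that $H^i_\fm(T)$ is $I$-almost zero for $i \le d - 1 = \dim R - 1$. The \emph{in particular} assertion is then immediate, since the image of an $I$-almost zero module under any $R$-linear map is $I$-almost zero.

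The first substep is to propagate the almost Cohen-Macaulay colon condition of Definition \ref{DefAlmost} from the sequence $\underline{y}$ to arbitrary tuples of powers: for every $c \in I$, every $0 \le j < d$, and every $(n_1, \ldots, n_{j+1})$,
\[
c \cdot \bigl((y_1^{n_1}, \ldots, y_j^{n_j}) :_T y_{j+1}^{n_{j+1}}\bigr) \subset (y_1^{n_1}, \ldots, y_j^{n_j}) T.
\]
This is proved by induction on $\sum n_i$ using the factorizations $y_i^{n_i} = y_i \cdot y_i^{n_i - 1}$; at each reduction one picks up an extra factor of some $c' \in I$, which is absorbed by the idempotence $I \cdot I = I$.

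Once the colon property is available for all powers, the almost vanishing of Koszul cohomology follows by induction on $d$ from the standard long exact sequence
\[
\cdots \to H^{i-1}(y_1^n, \ldots, y_{d-1}^n; T) \xrightarrow{\pm y_d^n} H^{i-1}(y_1^n, \ldots, y_{d-1}^n; T) \to H^i(y_1^n, \ldots, y_d^n; T) \to H^i(y_1^n, \ldots, y_{d-1}^n; T) \to \cdots
\]
attached to the decomposition $K^\bullet(y_1^n, \ldots, y_d^n; T) = K^\bullet(y_1^n, \ldots, y_{d-1}^n; T) \otimes K^\bullet(y_d^n; T)$. The base case $d = 1$ says $H^0(y_1^n; T) = (0 :_T y_1^n)$ is $I$-almost zero, which is the $j = 0$ colon condition for the power $y_1^n$ established in the first substep. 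In the inductive step, for $i < d - 1$ both neighbouring terms are almost zero by the inductive hypothesis; at $i = d - 1$, the kernel of $y_d^n$ acting on $T/(y_1^n, \ldots, y_{d-1}^n)T$ is exactly the colon module controlled by the propagated almost CM condition, and the cokernel of $y_d^n$ on $H^{d-2}(y_1^n, \ldots, y_{d-1}^n; T)$ is almost zero by induction. I expect the main obstacle to be the bookkeeping in the first substep: the element $c \in I$ surviving a multi-step reduction differs from the one at the start, and only the idempotence of $I$ prevents one from slipping out of $I$ entirely.
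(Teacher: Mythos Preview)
Your approach is correct and essentially self-contained: you use only the almost Cohen--Macaulay conclusion of Theorem~\ref{AlmostCMalg} (the colon conditions for the single sequence $p,x_2,\ldots,x_d$), bootstrap to arbitrary powers via the idempotence $I^2=I$, and then run the Koszul long exact sequence inductively. All of this works; the propagation to powers is most cleanly seen by passing to the abelian almost category, where a regular sequence on $T^a$ automatically has regular powers, but your explicit argument with accumulated factors of $I$ absorbed by $I^N=I$ is a valid unwinding of that.

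The paper takes a shorter but less self-contained route: rather than bootstrapping from the bare almost Cohen--Macaulay statement, it goes back into the \emph{proofs} of Theorem~\ref{AlmostCMalg} and Theorem~\ref{PerAbhyankar} to extract the stronger fact that $A/(p^m)\to T/(p^m)$ is $(pg)^{1/p^\infty}$-almost finite \'etale for every $m>0$. Since the base $A_{\infty\infty}/(p^m)$ already has $x_2^m,\ldots,x_d^m$ as an almost regular sequence (by construction), almost flatness transfers this to $T/(p^m)$ directly, and the paper then cites \cite[Theorem~3.17]{HM18} for the passage from Koszul to local cohomology. So the paper trades your inductive colon bookkeeping for an appeal to the specific construction plus an external reference. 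Your argument has the advantage of establishing the general principle that any $I$-almost Cohen--Macaulay algebra in the sense of Definition~\ref{DefAlmost} (with $I$ idempotent) has $I$-almost vanishing low-degree local cohomology, independent of how $T$ was built.
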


\begin{proof}
Letting $p,x_2,\ldots,x_d$ be a system of parameters of $R$, if one inspects the structure of the proof of Theorem \ref{AlmostCMalg} and Theorem \ref{PerAbhyankar},  it follows that $x_2^m,\ldots,x_d^m$ forms a $(pg)^{\frac{1}{p^\infty}}$-almost regular sequence on $T/(p^m)$ for all integers $m > 0$. As in the proof of \cite[Theorem 3.17]{HM18}, the Koszul cohomology modules $H^i(p^m,x_2^m,\ldots,x_d^m;T)$ and hence $H_\fm^i(T)$ are $(pg)^{\frac{1}{p^\infty}}$-almost zero for $i<\dim R$.  
\end{proof}

It is reasonable to study the following problem, which we credit to Heitmann in the $3$-dimensional case thanks to his proof of the direct summand conjecture; see \cite{H05}.

\begin{Problem}
Let $(R,\fm)$ be a complete Noetherian local domain of arbitrary characteristic with its absolute integral closure $R^+$ and the unique maximal ideal $\fm_{R^+}$. Fix a system of parameters $x_1,\ldots,x_d$ of $R$. Then does it hold true that
$$
c \cdot \big((x_1,\ldots,x_i):_{R^+} x_{i+1}\big) \subset (x_1,\ldots,x_i)R^+
$$
for any $c \in \fm_{R^+}$ and $i=0,\ldots,d-1$?
\end{Problem}

Bhatt gave an even stronger answer to the above problem in mixed characteristic in \cite{Bh20} by taking $x_1=p^n$, using prismatic cohomology and mod-$p^n$ Riemann-Hilbert correspondence. Namely, $p,x_2,\ldots,x_d$ is a regular sequence on $R^+$. In the equal prime characteristic case, Hochster and Huneke already gave a complete answer in \cite{HH92}. However, almost nothing is known in the equal characteristic zero case. Even in the mixed characteristic case, the above problem is not known to hold true if one starts with an \textit{arbitrary} system of parameters.

\begin{Problem}
Let $T$ be a big Cohen-Macaulay algebra over a Noetherian local domain $(R,\fm)$.
\begin{enumerate}
\item[$\bullet$]
Assume that $R$ has mixed characteristic. Then does $T$ map to an integral perfectoid big Cohen-Macaulay $R$-algebra? 

\item[$\bullet$]
Assume that $R$ has an arbitrary characteristic. Then does $R$ (or $T$) map to a coherent big Cohen-Macaulay $R$-algebra?
\end{enumerate}
\end{Problem}

For the coherence of absolutely integrally closed domains, see \cite{P22}. Here we mention a few related results.

\begin{proposition}
\label{freealgebra}
Assume that $T$ is a big Cohen-Macaulay algebra over a Noetherian local domain $(R,\fm)$ of any characteristic. Then $T$ maps to an $R$-algebra $B$ such that the following hold:
\begin{enumerate}
\item
$B$ is free over $T$. In particular, $B$ is a big Cohen-Macaulay $R$-algebra.

\item
$B$ is absolutely integrally closed. In other words, every nonzero monic polynomial in $B[X]$ has a root in $B$.
\end{enumerate}
\end{proposition}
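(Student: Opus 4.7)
The plan is to construct $B$ as a countable ascending union of $T$-algebras $B_n$, where at each stage $B_{n+1}$ is obtained from $B_n$ by adjoining a root of every monic polynomial with coefficients in $B_n$, while carefully tracking compatible $T$-bases so that $T$-freeness survives the colimit. Concretely, set $B_0:=T$, and given $B_n$, let $M_n$ denote the set of monic polynomials of positive degree in $B_n[X]$; define
\[
B_{n+1}:=B_n\bigl[X_f\ :\ f\in M_n\bigr]\big/\bigl(f(X_f)\ :\ f\in M_n\bigr),
\]
and put $B:=\varinjlim_n B_n$. Then $B$ is visibly absolutely integrally closed: any monic $h(X)\in B[X]$ has its finitely many coefficients in some $B_n$, so $h\in M_n$ and hence acquires a root in $B_{n+1}\subseteq B$.

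The core of the argument is the freeness of $B$ over $T$. I would build, by induction on $n$, a $T$-basis $\mathcal{E}_n$ of $B_n$ with $1\in\mathcal{E}_n$ and $\mathcal{E}_n\subseteq\mathcal{E}_{n+1}$. The quotient $B_n[X_f]/(f(X_f))$ is a free $B_n$-module on $\{1,X_f,\ldots,X_f^{\deg f-1}\}$, and writing $B_{n+1}$ as the filtered colimit of its finite sub-tensor-products over $B_n$ exhibits $B_{n+1}$ as $B_n$-free on the restricted monomials $\prod_f X_f^{k_f}$ with $0\le k_f<\deg f$ and $k_f=0$ for all but finitely many $f$. Since this $B_n$-basis contains $1$, multiplying it by $\mathcal{E}_n$ yields a $T$-basis $\mathcal{E}_{n+1}$ of $B_{n+1}$ that extends $\mathcal{E}_n$. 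Passing to the limit, $\mathcal{E}:=\bigcup_n\mathcal{E}_n$ is a $T$-basis of $B$, proving part (1).

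For the big Cohen-Macaulay conclusion in part (1), note that because $1\in\mathcal{E}$, the inclusion $T\hookrightarrow B$ splits as a map of $T$-modules and $B$ is therefore faithfully flat over $T$. Any system of parameters $x_1,\ldots,x_d$ of $R$ that is regular on $T$ thus remains regular on $B$, and the inequality $(x_1,\ldots,x_d)B\ne B$ follows from $(x_1,\ldots,x_d)T\ne T$ by faithful flatness. Hence $B$ is a big Cohen-Macaulay $R$-algebra.

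The main subtlety to watch is that a filtered colimit of free modules need not be free in general (for instance $\mathbb{Q}$ over $\mathbb{Z}$), so freeness is not automatically preserved at the colimit step. The explicit compatible-basis construction above—hinging on the fact that monic relations produce free extensions admitting bases containing $1$, and that such bases can be multiplied together to extend previous bases coherently—is precisely what allows the induction to push through the limit. Once freeness is secured, both the absolute integral closedness and the big Cohen-Macaulay property follow in short order.
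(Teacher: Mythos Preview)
Your proposal is correct and is essentially the approach the paper has in mind: the paper's proof simply cites \cite[Tag 0DCR]{Stacks}, and your construction of $B$ as the colimit of iterated adjunctions of roots of all monic polynomials, together with the compatible-basis argument for $T$-freeness, is precisely the content of that Stacks lemma. Your additional paragraph deducing the big Cohen-Macaulay property from faithful flatness is the straightforward verification the paper leaves implicit.
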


\begin{proof}
Just apply \cite[Tag 0DCR]{Stacks}.
\end{proof}

In relation to Proposition \ref{freealgebra} and some observations on $p$-integral closure as discussed in \cite{CKS21}, we prove the following fact, which shows that flatness can be destroyed under taking $p$-integral closure. We refer the reader to \cite[2.1.7]{CKS21} for details on $p$-integral closure.

\begin{proposition}
Let $(R,\fm)$ be a non-regular local domain of mixed characteristic $p>0$. Then there exists a faithfully flat $R$-algebra $T$ such that $p^{\frac{1}{p^n}} \in T$ for $n>0$ and the Frobenius map induces a surjection $T/(p^{\frac{1}{p}}) \to T/(p)$. Moreover, let $T$ be any $R$-algebra with the aforementioned properties, and let $\widetilde{T}$ be the $p$-integral closure of $T$ in $T[\frac{1}{p}]$. Then the $p$-adic completion of $\widetilde{T}$ is integral perfectoid, 
but $\widetilde{T}$ is never flat over $R$.
\end{proposition}

\begin{proof}
The first assertion is due to Proposition \ref{freealgebra}. It follows from \cite[Proposition 2.1.8]{CKS21} that the $p$-adic completion $\widehat{\widetilde{T}}$ is integral perfectoid. Assume that $\widetilde{T}$ is flat over $R$. Since $T \to \widetilde{T}$ is an integral extension, it follows that $\widetilde{T}$ is faithfully flat over $R$. Moreover, $\widehat{\widetilde{T}}$ is faithfully flat over $R$ by \cite[Theorem 0.1]{Ye18}. But the main result of \cite{BIM18} forces $R$ to be regular, which contradicts the hypothesis that $R$ is not regular.
\end{proof}

\begin{theorem}[Gabber-Ramero]
Let $(R,\fm)$ be a complete local domain of mixed characteristic. Then any integral perfectoid big Cohen-Macaulay $R$-algebra $B$ admits an $R$-algebra map $B \to C$ such that $C$ is an integral perfectoid big Cohen-Macaulay $R$-algebra and $C$ is an absolutely integrally closed quasi-local domain.
\end{theorem}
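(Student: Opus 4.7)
The plan is to invoke Zorn's lemma on the class $\mathcal{F}$ of integral perfectoid big Cohen-Macaulay $R$-algebras $C'$ equipped with an $R$-algebra homomorphism $B \to C'$, ordered by factorization over $B$; a maximal element of $\mathcal{F}$ will then be shown to be forced to be a quasi-local absolutely integrally closed domain. The class $\mathcal{F}$ is non-empty since it contains $B$ itself. For any chain $\{(B \to C_\alpha)\}$, one first forms the filtered colimit $C_\infty := \varinjlim_\alpha C_\alpha$; the big Cohen-Macaulay property passes to $C_\infty$ because the parameters $p, x_2, \ldots, x_d$ remain a regular sequence under filtered colimits of rings in which they are regular. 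One then replaces $C_\infty$ by its perfectoidization (obtained as the $p$-adic completion, after adjoining compatible $p$-power roots of a pseudouniformizer as in Bhatt-Scholze), which is integral perfectoid by construction and remains big Cohen-Macaulay by an inductive application of Lemma \ref{adiccompletion} to the parameter sequence. Thus every chain has an upper bound, and Zorn yields a maximal $C \in \mathcal{F}$.

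Given such a maximal $C$, I would establish the three structural properties by contradiction through strict extension arguments. If $C$ admitted a nontrivial zero divisor, prime avoidance applied at each of $p, x_2, \ldots, x_d$ would produce a minimal prime $\mathfrak{q} \subset C$ on whose quotient the parameters remain a regular sequence; perfectoidizing $C/\mathfrak{q}$ would strictly enlarge $C$ inside $\mathcal{F}$, a contradiction. If $C$ had more than one maximal ideal, then a maximal ideal $\mathfrak{m} \subset C$ containing the parameters yields a localization $C_\mathfrak{m}$ whose $p$-adic completion, further perfectoidized, gives a strict enlargement of $C$, contradicting maximality. Finally, if some nonzero monic $f \in C[X]$ had no root in $C$, then $C[X]/(f)$, after quotienting by a minimal prime chosen so that the parameters stay regular and then perfectoidizing, would again strictly enlarge $C$ in $\mathcal{F}$, so every nonzero monic polynomial must already have a root in $C$.

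The main obstacle is the repeated verification that perfectoidization (composed with quotienting by a minimal prime, with localization at a maximal ideal, or with taking a finite integral extension) genuinely preserves the big Cohen-Macaulay property. In each subcase the key point is that the parameter sequence lies inside the relevant maximal ideal, so localization does not invert any parameter, and the chosen minimal primes are selected precisely to avoid the parameters, so regularity survives each step. The final $p$-adic completion then preserves the regular sequence by Lemma \ref{adiccompletion} applied inductively. A secondary subtlety is the interaction between the $p$-adic completion functor and taking quotients or localizations: one must argue that perfectoidization is compatible with these operations up to almost mathematics, so that the resulting ring sits inside $\mathcal{F}$ and properly enlarges $C$, which is what ultimately drives the Zorn argument.
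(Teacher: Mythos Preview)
The paper does not prove this statement at all; its proof is a one-line citation of \cite[Theorem 17.5.96]{GR18}. So there is no in-paper argument to compare against, and your proposal has to be judged on its own merits.

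Your Zorn strategy is natural, but as written it has real gaps. First, $\mathcal{F}$ is a proper class, not a set, so Zorn's lemma does not apply; you would need to bound cardinalities and then check that your upper-bound construction stays within the bound. Second, and more seriously, the step ``prime avoidance applied at each of $p, x_2, \ldots, x_d$ would produce a minimal prime $\mathfrak{q} \subset C$ on whose quotient the parameters remain a regular sequence'' is not justified, and prime avoidance says nothing of the sort. What is true is that each parameter, being a nonzero divisor, lies in no minimal prime, so each is nonzero in the domain $C/\mathfrak{q}$; but nonzero elements of a domain need not form a regular sequence. Concretely, you need $x_2$ to be a nonzero divisor on $C/(\mathfrak{q}+pC)$, and there is no reason the image of $\mathfrak{q}$ in $C/pC$ is a minimal prime of $C/pC$, so the induction breaks. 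The same gap recurs in your treatment of $C[X]/(f)$: ``choosing a minimal prime so that the parameters stay regular'' is precisely the hard point, not a consequence of prime avoidance. Finally, a minor correction to the chain step: a filtered colimit of integral perfectoid rings is already Witt-perfect (surjectivity of Frobenius mod $p$ passes to filtered colimits), so no adjunction of roots is needed --- only $p$-adic completion --- and the survival of the big Cohen-Macaulay property under $p$-adic completion follows from $\widehat{C_\infty}/p^n \cong C_\infty/p^n$ rather than from an inductive use of Lemma~\ref{adiccompletion}.
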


\begin{proof}
See \cite[Theorem 17.5.96]{GR18}.
\end{proof}

\begin{Problem}
Let $(R,\fm)$ be a complete Noetherian local domain of mixed characteristic. Then can one construct a big Cohen-Macaulay $R$-algebra $T$ such that $T$ has bounded $p$-power roots of $p$ or equivalently, the radical ideal $\sqrt{pT}$ is finitely generated?
\end{Problem}

So far, big Cohen-Macaulay algebras constructed using perfectoids necessarily admit $p$-power roots of $p$ and we do not know if the construction as stated in the problem is possible.

\begin{Problem}
Let the notation be the same as that of Theorem \ref{RiemannExt}. Then under what condition is it true that $
\widehat{\varprojlim_{j>0} A^{j\circ}} \hookrightarrow \varprojlim_{j>0} \widehat{A^{j\circ}}$ is an isomorphism?
\end{Problem}

This is connected with the Mittag-Leffler condition. See \cite[Corollary 8.2.16]{GR18} for a relevant result. The paper \cite{GR18} also introduces and discusses the almost variant of the Mittag-Leffler condition. Bhatt and Scholze introduced the ``perfectoidization functor" over semiperfectoid rings. While our present paper is independent from \cite{BS22}, we are left to investigate the relationship between $\widehat{T}$ from Theorem \ref{AlmostCMalg} and $(A_{\infty g} \otimes_A R)_{\rm{perfd}}$.

\subsection{A construction of big Cohen-Macaulay modules}

We demonstrate a method of constructing a big Cohen-Macaulay module by using the $R$-algebra $T$ from Theorem \ref{AlmostCMalg}.

\begin{corollary}
\label{BigCMModule}
Let the notation be as in Theorem \ref{AlmostCMalg}. Set $M:=(pg)^{\frac{1}{p^\infty}}T$. Then $M$ is an ideal of $T$ that is $(pg)^{\frac{1}{p^\infty}}$-almost isomorphic to $T$, and $M$ is a big Cohen-Macaulay $R$-module. In other words, $H^i_\fm(M)=0$ for all $0 \le i \le \dim R-1$. 
\end{corollary}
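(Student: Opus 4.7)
The plan is to dispose of the easy structural assertions first, and then extract exact vanishing of $H^i_\fm(M)$ from the almost vanishing of $H^i_\fm(T)$ (established in the preceding corollary) by a telescoping argument.

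First, by Theorem \ref{AlmostCMalg}(1) the ring $T$ contains a compatible system of $p$-power roots of $pg$, so
$M=(pg)^{\frac{1}{p^\infty}}T=\bigcup_{n>0}(pg)^{\frac{1}{p^n}}T$
is an ideal of $T$. For every $c\in M$ one has $cT\subset M$, hence the ideal $(pg)^{\frac{1}{p^\infty}}T$ annihilates $T/M$, so the inclusion $M\hookrightarrow T$ is a $(pg)^{\frac{1}{p^\infty}}$-almost isomorphism.

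Next, since $T$ is a domain by Theorem \ref{AlmostCMalg}(3), each $(pg)^{\frac{1}{p^n}}$ is a nonzero divisor on $T$, so multiplication by it yields an isomorphism of $T$-modules $\phi_n\colon T\xrightarrow{\cong}(pg)^{\frac{1}{p^n}}T$. Under these identifications the chain of inclusions $(pg)^{\frac{1}{p^n}}T\hookrightarrow (pg)^{\frac{1}{p^{n+1}}}T$ corresponds to multiplication by $(pg)^{(p-1)/p^{n+1}}$ on $T$. Consequently $M$ is realized as the filtered colimit
$$
M\cong \varinjlim_{n>0}\Bigl(T\xrightarrow{\,\cdot(pg)^{(p-1)/p^{n+1}}\,}T\Bigr),
$$
and since local cohomology commutes with filtered colimits we obtain
$$
H^i_\fm(M)\cong \varinjlim_{n>0}H^i_\fm(T),
$$
where the transition maps are still multiplication by $(pg)^{(p-1)/p^{n+1}}$.

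Now I invoke the preceding corollary: for $0\le i\le\dim R-1$ the module $H^i_\fm(T)$ is $(pg)^{\frac{1}{p^\infty}}$-almost zero, so every element of the ideal $(pg)^{\frac{1}{p^\infty}}T$ annihilates it. Because $p\ge 2$ implies $p-1\ge 1$, one has $(pg)^{(p-1)/p^{n+1}}\in (pg)^{\frac{1}{p^{n+1}}}T\subset (pg)^{\frac{1}{p^\infty}}T$, so each transition map in the above telescope acts as zero on $H^i_\fm(T)$. A filtered colimit of a sequence all of whose transition maps are zero is zero, whence $H^i_\fm(M)=0$ in the claimed range.

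The main conceptual hurdle is that the almost isomorphism $M\hookrightarrow T$ alone only gives \emph{almost} vanishing, not honest vanishing, of $H^i_\fm(M)$. The telescoping realization of $M$ is what converts this into exact vanishing: the very elements $(pg)^{(p-1)/p^{n+1}}$ that effect the inclusions $(pg)^{\frac{1}{p^n}}T\subset (pg)^{\frac{1}{p^{n+1}}}T$ lie in the annihilating ideal of $H^i_\fm(T)$, so the colimit collapses. Once this observation is in place, the rest of the argument is formal.
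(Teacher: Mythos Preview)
Your argument for the vanishing of $H^i_\fm(M)$ is essentially the same as the paper's: both realize $M$ as a filtered colimit of copies of $T$ with transition maps given by multiplication by $(pg)^{\frac{1}{p^n}-\frac{1}{p^{n+1}}}=(pg)^{(p-1)/p^{n+1}}$, observe that local cohomology commutes with filtered colimits, and use the $(pg)^{\frac{1}{p^\infty}}$-almost vanishing of $H^i_\fm(T)$ to conclude that every transition map is zero. The paper presents this via an explicit commutative ladder, but the content is identical.

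There is one genuine omission. To be a big Cohen-Macaulay $R$-module (Definition \ref{BigMac}), one also needs $\fm M\neq M$; the vanishing $H^i_\fm(M)=0$ for $i<\dim R$ by itself does not rule out, e.g., the zero module. The paper handles this separately at the end: since $R$ is a complete Noetherian local domain, one chooses a rank-$1$ discrete valuation $v$ on $R$ centered at $\fm$, extends it to a $\mathbb{Q}$-valued valuation on the domain $T\subset R^+$, and uses it to see that $(pg)^{\frac{1}{p^\infty}}T$ cannot equal $\fm\cdot(pg)^{\frac{1}{p^\infty}}T$ (elements of $\fm M$ have valuation bounded below by $v(\fm)>0$ plus something, whereas $M$ contains elements of arbitrarily small positive valuation). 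You should add this short step; without it the ``big Cohen-Macaulay'' claim is not fully justified.
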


\begin{proof}
Notice that $T$ is $pg$-torsion free and there is an isomorphism as $T$-modules: $T \cong (pg)^{\frac{1}{p^n}}T$. Consider the commutative diagram:
$$
\begin{CD}
T @>\times (pg)^{1-\frac{1}{p}}>> T @>\times (pg)^{\frac{1}{p}-\frac{1}{p^2}}>> T @>>> \cdots\\
@V\times pgVV@V\times (pg)^{\frac{1}{p}}VV @V\times (pg)^{\frac{1}{p^2}}VV \\
(pg)T @>>> (pg)^{\frac{1}{p}}T @>>> (pg)^{\frac{1}{p^2}}T @>>> \cdots \\
\end{CD}
$$
where the horizontal arrows in the bottom are natural injections, and the vertical arrows are bijections. Fix any $i<\dim R$. Applying the local cohomology to this commutative diagram, the bottom horizontal sequence becomes:
\begin{equation}
\label{BigCMModule1}
\varinjlim_{n>0} H_{\fm}^i((pg)^{\frac{1}{p^n}}T) \cong H_{\fm}^i(\varinjlim_{n>0}(pg)^{\frac{1}{p^n}}T) \cong H_{\fm}^i(M),
\end{equation}
where the first isomorphism uses the commutativity of cohomological functor with direct limit. The horizontal upper sequence becomes:
\begin{equation}
\label{BigCMModule2}
\varinjlim_{n} \Big\{H_{\fm}^i(T) \xrightarrow{\times(pg)^{1-\frac{1}{p}}} H_{\fm}^i(T) \xrightarrow{\times (pg)^{\frac{1}{p}-\frac{1}{p^2}}} H_{\fm}^i(T) \to \cdots  \Big\} \cong 0,
\end{equation}
because the local cohomology modules $H_{\fm}^i(T)$ are annihilated by $(pg)^{\frac{1}{p^n}}$ for any $n>0$ and $i<\dim R$. As $(\ref{BigCMModule1})$ and $(\ref{BigCMModule2})$
yield the isomorphic modules, we have the desired vanishing cohomology.

Since $R$ is a Noetherian local domain, there is a discrete valuation $v:R \to \mathbb{Z}_{\ge 0} \cup \{\infty\}$ with center on the maximal ideal. Then one extends $v$ as a $\mathbb{Q}$-valued valuation on $T$. One can use this valuation to deduce that $M \ne \fm M$ and the details are left as an exercise; see also \cite[Lemma 3.15]{MS12}.
\end{proof}

\section{Appendix A: Almost integrality under finiteness conditions}\label{AppendixA}
\label{non-adicBanachcompletion}

In this appendix, our aim is to give a proof to the following result (see Proposition \ref{propIC=CIC}). 
As in Corollary \ref{Cont3}, for a topological space $X$, we denote by $[X]$ the maximal separated quotient of $X$, thus defining the natural epimorphism $X \to [X]$.

\begin{proposition}
\label{propIC=CIC}
Let $A_0$ be a ring that is integral over a Noetherian ring, and let $t\in A_0$ be a regular element. Then an element $a\in A_0[\frac{1}{t}]$ is integral over $A_0$ if and only if it is almost integral over $A_0$. More precisely, for the Tate ring $A$ associated to $(A_0, (t))$, we have 
$$
(A_0)^+_A=A^\circ=\Big\{a \in A~\Big|~|a|_x\leq 1~\mbox{for any}~x \in [\textnormal{Spa}(A, (A_0)^+_A)]\Big\}. 
$$
\end{proposition}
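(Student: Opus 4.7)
The plan is to prove the chain of inclusions $(A_0)^+_A \subseteq A^\circ \subseteq \{a \in A \mid |a|_x \le 1\ \forall x \in [X]\} \subseteq (A_0)^+_A$, where $X = \Spa(A, (A_0)^+_A)$. The first two inclusions are standard. For the first, every element $a$ integral over $A_0$ has all its powers contained in a finitely generated $A_0$-submodule of $A$, which is bounded in the $t$-adic topology, so $a$ is powerbounded. For the second, a rank-$1$ continuous semivaluation $|\cdot|_x$ on $A$ with $|(A_0)^+_A|_x \le 1$ satisfies $|a|_x \le 1$ for every powerbounded $a$: if $t^n a^m \in A_0$ for all $m \ge 0$ and some fixed $n$, then $|a|_x^m \le |t|_x^{-n}$ for all $m$, and since $|t|_x > 0$ this forces $|a|_x \le 1$.

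The heart of the argument is the third inclusion, and I plan to reduce it to Lemma \ref{Cont1} applied with $(C, D) = (A, A_0)$ and $\mathcal{S} = \{t\}$. Since $t$ is a unit in $A = A_0[\frac{1}{t}]$, the condition $|t| \ne 0$ in Lemma \ref{Cont1} is automatic for every semivaluation on $A$. The lemma then yields
\[
(A_0)^+_A = \bigl\{a \in A \mid |a| \le 1 \text{ for every } |\cdot| \in \Val(A, A_0)\bigr\},
\]
so it suffices to verify that each $|\cdot| \in \Val(A, A_0)$ either bounds every element of $A$ by $1$ trivially or comes from a point of $[X]$. Since $|A_0| \le 1$ and $t \in A_0$, one has $|t| \in (0, 1]$.

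If $|t| = 1$, then every $b/t^n \in A = A_0[\frac{1}{t}]$ satisfies $|b/t^n| = |b| \le 1$, and the desired bound is automatic. If $|t| < 1$, then $|\cdot|$ has rank $1$ with $|t|^n \to 0$, which combined with $|A_0| \le 1$ yields continuity of $|\cdot|$ for the $t$-adic topology on $A$. Moreover, $(A_0)^+_A$ is integral over $A_0$, so the usual monic-polynomial estimate forces $|(A_0)^+_A| \le 1$. Thus $|\cdot|$ is a continuous semivaluation on $A$ bounded by $1$ on $(A_0)^+_A$, i.e.\ a point of $X$; being of rank $1$ it is a maximal generization and therefore represents a point of $[X]$. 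In either case, the hypothesis $|a|_x \le 1$ for all $x \in [X]$ forces $|a| \le 1$ under $|\cdot|$, and Lemma \ref{Cont1} concludes that $a \in (A_0)^+_A$.

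The main obstacle is really packaged inside Lemma \ref{Cont1}, whose proof invokes a classical valuation-theoretic construction (dominating a prime of the Noetherian subring by a Noetherian valuation ring via \cite{SH06}, then extracting a rank-$1$ Pr\"ufer-type integral closure) and uses the Noetherian hypothesis in an essential way. Beyond that, the only remaining work is bookkeeping: matching the algebraically defined elements of $\Val(A, A_0)$ with rank-$1$ continuous semivaluations on the affinoid Tate ring $(A, (A_0)^+_A)$, and observing that the corner case $|t| = 1$ is vacuous.
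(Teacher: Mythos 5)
Your proof is correct, but it follows a genuinely different route from the paper's. The paper does not invoke Lemma \ref{Cont1} directly here: it reduces the proposition to Corollary \ref{Cont3}, whose hypotheses require the ring of definition to be $t$-adically Zariskian. Since $A_0$ need not be Zariskian, the paper passes to the $t$-adic Zariskization $(A_0)^{Zar}_{(t)}$, establishes compatibility of Zariskization with integral extensions and integral closures (Lemma \ref{20200102} and Corollary \ref{12290101}), applies Corollary \ref{Cont3} to the Zariskized ring, and then descends the conclusion back to $A_0$ via Beauville--Laszlo (Lemma \ref{Beauville-Laszlo}), using that $(A_0)^+_A \to ((A_0)^+_A)^{Zar}_{(t)}$ becomes an isomorphism after $t$-adic completion. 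Your argument short-circuits all of this: applying Lemma \ref{Cont1} with $(C,D)=(A,A_0)$, you note that any $|\cdot| \in \Val(A,A_0)$ with $|t|=1$ automatically satisfies $|A|\le 1$, because every element of $A=A_0[\frac{1}{t}]$ has only powers of $t$ in its denominator; the remaining valuations have $|t|<1$, hence are of rank one, continuous for the $t$-adic topology, and bounded by $1$ on $(A_0)^+_A$ by the monic-equation estimate, so they are represented in $[X]$. This observation is exactly what fails in the more general settings of Corollaries \ref{Cont2} and \ref{Cont3}, where one localizes at an element $g$ distinct from the pseudouniformizer, so a valuation with $|t|=1$ need not bound all of $C$ and the Zariskian hypothesis is needed to force $|t|<1$ on the valuation constructed in Lemma \ref{Cont1}. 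In the special case of the proposition the denominator is $t$ itself, and your shortcut is valid. Both proofs rest on the same valuation-theoretic core (Lemma \ref{Cont1}); yours is shorter and avoids the Zariskization and descent machinery, while the paper's route packages that machinery into reusable appendix lemmas.
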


The idea of our proof is to reduce the assertion to the situation of Corollary \ref{Cont3}, using \emph{Zariskization}. 
Let us recall its definition below (see also \cite[Chapter 0, 7.3(b)]{FK18} or \cite[Definition 3.1]{T18}).

\begin{definition}
Let $A$ be a ring with an ideal $I\subset A$. Then we denote by $A^{Zar}_I$ the localization $(1+I)^{-1}A$, and call it the \emph{$I$-adic Zariskization} of $A$. 
\end{definition}

We will utilize the following properties of Zariskization.

\begin{lemma}
\label{20200102}
Let $A\subset B$ be an integral ring extension and let $I\subset A$ be an ideal. Then the following assertions hold. 
\begin{enumerate}
\item
The induced ring map $A^{Zar}_I\to B^{Zar}_{IB}$ is also integral. 
\item
Let $\{A_\lambda\}_{\lambda\in\Lambda}$ be the filtered system of all module-finite $A$-subalgebras of $B$. 
Then we have a canonical isomorphism of rings $\varinjlim_\lambda (A_\lambda)^{Zar}_{IA_\lambda}\xrightarrow{\cong}B^{Zar}_{IB}$. 
\end{enumerate}
\end{lemma}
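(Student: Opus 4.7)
The strategy for both parts is to bootstrap from the module-finite case, exploiting that every finite subset of $B$ is contained in a module-finite $A$-subalgebra (by integrality of $A\subset B$).

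For part (1), I would take an arbitrary element $\beta\in B^{Zar}_{IB}$, which has the form $b/(1+j)$ with $b\in B$ and $j=\sum_{k=1}^n i_k b_k$ for some $i_k\in I$, $b_k\in B$. Setting $A':=A[b,b_1,\ldots,b_n]\subset B$ produces a module-finite $A$-subalgebra, so $(1+I)^{-1}A\to (1+I)^{-1}A'$ is module-finite and in particular integral. It will then suffice to show that $\beta$ already lies in $(1+I)^{-1}A'$, i.e.\ that $1+j$ is a unit of $(1+I)^{-1}A'$; this is the main (and essentially the only) obstacle in the argument.

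My plan for this unit claim is a standard lying-over argument. The maximal ideals of $A^{Zar}_I=(1+I)^{-1}A$ correspond bijectively to the maximal ideals of $A$ containing $I$, so $I$ lies in the Jacobson radical of $A^{Zar}_I$. For any integral extension $C$ of $A^{Zar}_I$, every maximal ideal of $C$ contracts by lying-over to a maximal ideal of $A^{Zar}_I$ and therefore contains $IC$. Applied with $C=(1+I)^{-1}A'$ and the observation $j\in IA'\subset IC$, this shows that $1+j$ avoids every maximal ideal of $C$ and is hence a unit. This yields $(1+I)^{-1}A'=(1+IA')^{-1}A'=(A')^{Zar}_{IA'}$, which completes part (1) and simultaneously gives the surjectivity half of part (2): every $\beta\in B^{Zar}_{IB}$ already lives in $(A_\lambda)^{Zar}_{IA_\lambda}$ for the finite subalgebra $A_\lambda=A[b,b_1,\ldots,b_n]$.

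For the injectivity half of part (2), I would start with a class in $\varinjlim_\lambda (A_\lambda)^{Zar}_{IA_\lambda}$ represented by $a_\lambda/(1+i_\lambda)\in (A_\lambda)^{Zar}_{IA_\lambda}$ whose image in $B^{Zar}_{IB}$ vanishes. By definition of localization there exist $i_k\in I$ and $b_k\in B$ with $\bigl(1+\sum_k i_k b_k\bigr)a_\lambda=0$ in $B$. Enlarging $A_\lambda$ to the module-finite $A$-subalgebra $A_\mu:=A_\lambda[b_1,\ldots,b_n]$, this identity already holds in $A_\mu$ with $1+\sum_k i_k b_k\in 1+IA_\mu$, so $a_\lambda/(1+i_\lambda)$ vanishes in $(A_\mu)^{Zar}_{IA_\mu}$ and hence in the colimit. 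Combined with surjectivity, this yields the isomorphism.
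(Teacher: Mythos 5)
Your proof is correct, but it takes a more hands-on route than the paper's. For part (1) the paper base-changes: it forms $B'=B\otimes_A A^{Zar}_I=(1+I)^{-1}B$, which is integral over $A^{Zar}_I$ for free, and then shows $B'\to B^{Zar}_{IB}$ is surjective by producing a section $B^{Zar}_{IB}\to B'$ from the universal property of localization, the key input being that $B'$ is $IB'$-adically Zariskian. That key input is exactly your lying-over observation (integrality plus the fact that $I$ lies in the Jacobson radical of $A^{Zar}_I$ forces $IB'$ into the Jacobson radical of $B'$), so the two arguments share their essential content; you simply run it element by element through module-finite subalgebras instead of once globally. Your version has the advantage of delivering along the way the identification $(1+I)^{-1}A'=(A')^{Zar}_{IA'}$ for a module-finite subalgebra $A'$, and hence the surjectivity half of part (2) with no extra work, whereas the paper re-derives that surjectivity by a second universal-property argument (a section $\psi\colon B^{Zar}_{IB}\to \varinjlim_\lambda (A_\lambda)^{Zar}_{IA_\lambda}$, using that the colimit is Zariskian by part (1)). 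The injectivity argument in part (2) is identical in both treatments.
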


\begin{proof}
$(1)$: Set $B'=B\otimes_{A}A^{Zar}_I$. Then the map $A^{Zar}_I\to B^{Zar}_{IB}$ is given as the composite of the integral map $A^{Zar}_I\to B'$ and the canonical $B$-algebra map $B'\to B^{Zar}_{IB}$. Moreover, since $B'$ is $IB'$-adically Zariskian, we have the $B$-algebra map $B^{Zar}_{IB}\to B'$. Since the composite $B^{Zar}_{IB}\to B'\to B^{Zar}_{IB}$ is the identity map by the universal property, the map $B'\to B^{Zar}_{IB}$ is surjective. Hence the assertion follows. 

$(2)$: Since $B$ is integral over $A$, we have $\varinjlim_\lambda A_\lambda=B$. For each $\lambda\in\Lambda$, the map $A_\lambda\hookrightarrow B$ induces the $A_\lambda$-algebra map $\varphi_\lambda: (A_\lambda)^{Zar}_{IA_\lambda}\to B^{Zar}_{IB}$. Hence we have the $B$-algebra map $\varphi: \varinjlim_\lambda(A_\lambda)^{Zar}_{IA_\lambda}\to B^{Zar}_{IB}$. 
Now for any $x\in IB$, there exists some $\lambda\in \Lambda$ such that $1+x\in 1+IA_\lambda$. Hence $\varphi$ is injective. 
Set $C:=\varinjlim_\lambda(A_\lambda)^{Zar}_{IA_\lambda}$. Since $A^{Zar}_I\to C$ is integral by the assertion (1), $C$ is $IC$-adically Zariskian. 
Hence we obtain the $B$-algebra map $\psi: B^{Zar}_{IB}\to C^{Zar}_{IC}$, and the composite $\varphi\circ \psi$ is the identity map by the universal property. Therefore $\varphi$ is surjective. Thus the assertion follows. 
\end{proof}

\begin{corollary}
\label{12290101}
Let $A_0$ be a ring with a regular element $t \in A_0$. Put $A:=A_0[\frac{1}{t}]$ and $A':=({A_0})^{Zar}_{(t)}[\frac{1}{t}]$. Then the inclusion $A_0\hookrightarrow (A_0)^+_{A}$ induces an isomorphism $(({A_0})^{Zar}_{(t)})^+_{A'}\xrightarrow{\cong}((A_0)^+_A)^{Zar}_{(t)}$. 
\end{corollary}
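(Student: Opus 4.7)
The plan is to realize both sides as explicit localizations of the single ring $(A_0)^+_A$ and then compare the two multiplicative sets. First I would note the identifications $(A_0)^{Zar}_{(t)}=(1+tA_0)^{-1}A_0$ and $A'=(A_0)^{Zar}_{(t)}[\tfrac{1}{t}]=(1+tA_0)^{-1}A$, so that $(A_0)^{Zar}_{(t)}\hookrightarrow A'$ is nothing but the localization of $A_0\hookrightarrow A$ at the multiplicative set $1+tA_0\subset A_0$. By the classical fact that integral closure commutes with localization, this yields
$$
\bigl((A_0)^{Zar}_{(t)}\bigr)^+_{A'}=(1+tA_0)^{-1}(A_0)^+_A,
$$
while directly from the definition
$$
\bigl((A_0)^+_A\bigr)^{Zar}_{(t)}=\bigl(1+t(A_0)^+_A\bigr)^{-1}(A_0)^+_A.
$$
Both appear as localizations of $(A_0)^+_A$, and the inclusion $1+tA_0\subset 1+t(A_0)^+_A$ induces the canonical map in the direction of the statement. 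It therefore suffices to show that every element of $1+t(A_0)^+_A$ is already a unit in $(1+tA_0)^{-1}(A_0)^+_A$; once this is done, the universal property of localization provides the inverse map, and the two localizations are canonically isomorphic.

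For this key step I would use a substitution trick on the integral equation for $a$. Fix $a\in (A_0)^+_A$ and set $b:=1+ta$; choose an integral equation $a^n+c_{n-1}a^{n-1}+\cdots+c_0=0$ over $A_0$. Substituting $a=(b-1)/t$ and multiplying through by $t^n$ gives
$$
(b-1)^n+c_{n-1}t(b-1)^{n-1}+c_{n-2}t^2(b-1)^{n-2}+\cdots+c_0 t^n=0.
$$
Only the first summand contributes to the coefficient of $b^n$, which equals $1$, so the left-hand side has the form $Q(b)$ with $Q(X)=X^n+q_{n-1}X^{n-1}+\cdots+q_0\in A_0[X]$. The constant term is
$$
q_0=(-1)^n+\sum_{i=0}^{n-1}(-1)^i c_i t^{n-i},
$$
and $(-1)^n q_0\in 1+tA_0$ is a unit in $(1+tA_0)^{-1}A_0$; hence so is $q_0$. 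The relation $b\cdot\bigl(b^{n-1}+q_{n-1}b^{n-2}+\cdots+q_1\bigr)=-q_0$ then exhibits $b$ as a unit in $(1+tA_0)^{-1}(A_0)^+_A$, as required.

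The only genuine obstacle is this substitution trick, which forces the constant term of the monic equation for $1+ta$ into $\pm(1+tA_0)$; everything else is formal (integral closure commuting with localization, together with the universal property of localization). I would also note that, although the preceding Proposition \ref{propIC=CIC} relies on the Noetherian and integrality hypotheses on $A_0$ in order to invoke Corollary \ref{Cont3}, those hypotheses are not needed in the proof of this corollary itself.
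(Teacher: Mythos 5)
Your proof is correct, and its key step is genuinely different from the paper's. Both arguments ultimately amount to comparing the localizations of $(A_0)^+_A$ at the two multiplicative sets $1+tA_0$ and $1+t(A_0)^+_A$ (the paper's reduction ``it suffices to show $((A_0)^+_A)^{Zar}_{(t)}\cong (A_0)^+_A\otimes_{A_0}(A_0)^{Zar}_{(t)}$'' is exactly your first display, after commuting integral closure with localization). But where you then argue elementwise --- substituting $a=(b-1)/t$ into the integral equation to produce a monic equation for $b=1+ta$ over $A_0$ whose constant term lies in $\pm(1+tA_0)$, so that $b$ is forced to be a unit after inverting $1+tA_0$ --- the paper instead invokes Lemma \ref{20200102}(2) to write $((A_0)^+_A)^{Zar}_{(t)}$ as $\varinjlim_\lambda (A_\lambda)^{Zar}_{(t)}$ over the module-finite subalgebras $A_\lambda\subset (A_0)^+_A$, and for each $\lambda$ uses a conductor containment $t^mA_\lambda\subset A_0$ to sandwich $1+t^{m+1}A_\lambda\subset 1+tA_0\subset 1+tA_\lambda$ and conclude $(A_\lambda)^{Zar}_{(t)}\cong A_\lambda\otimes_{A_0}(A_0)^{Zar}_{(t)}$. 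Your route is more self-contained (it bypasses the colimit lemma and the module-finiteness/conductor step entirely), at the cost of the one computation with the constant term; the paper's route recycles machinery it has already set up. Your closing remark is also accurate: neither argument uses the Noetherian or integral-over-Noetherian hypotheses of Proposition \ref{propIC=CIC}, and the corollary as stated does not assume them.
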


\begin{proof}
Since integrality of a ring extension is preserved under localization, it suffices to show that $((A_0)^+_A)^{Zar}_{(t)}\cong (A_0)^+_A\otimes_{A_0}(A_0)^{Zar}_{(t)}$. 
First, we have an isomorphism $\varinjlim_{\lambda}(A_\lambda)^{Zar}_{(t)}\xrightarrow{\cong} ((A_0)^+_A)^{Zar}_{(t)}$ by Lemma \ref{20200102} (2). 
Moreover for each $\lambda\in\Lambda$, there exists some $m>0$ for which $t^mA_\lambda\subset A_0$. Then, since $1+t^{m+1}A_\lambda\subset 1+tA_0$, we have $(A_\lambda)^{Zar}_{(t)} \cong (A_\lambda)^{Zar}_{(t^{m+1})}\cong A_\lambda\otimes_{A_0}(A_0)^{Zar}_{(t)}$. Thus the assertion follows. 
\end{proof}

Now we can complete the proof of Proposition \ref{propIC=CIC}.

\begin{proof}[Proof of Proposition \ref{propIC=CIC}]
Set $X=\Spa(A, (A_0)^+_A)$. Since we know that 
$$
(A_0)^+_A\subset A^\circ\subset \Big\{a \in A~\Big|~|a|_x\leq 1~\mbox{for any}~x \in [X]\Big\},
$$
it suffices to show the reverse inclusion. 
Pick $c\in A$ such that $|c|_x\leq 1$ for any $x\in [X]$. 
By assumption, there exists a Noetherian subring $R\subset A_0$ such that $t\in R$ and the filtered system $\{R_\lambda\}_{\lambda\in \Lambda}$ of all module-finite $R$-subalgebras in $A_0$ satisfies $A_0=\varinjlim_{\lambda}R_\lambda$. 
Then by Lemma \ref{20200102},  $A'_0:=\varinjlim_\lambda (R_\lambda)^{Zar}_{(t)}$ is integral over a Noetherian ring $R^{Zar}_{(t)}$. Let $A'$ be the Tate ring associated to $(A'_0, (t))$, and $X'=\Spa(A', (A'_0)^+_{A'})$. 
Then Corollary \ref{Cont3} implies that  
$$
(A'_0)^+_{A'}=(A')^\circ=\Big\{a \in A'~\Big|~|a|_{x'}\leq 1~\mbox{for any}~x' \in [X']\Big\}. 
$$
Moreover, for the continuous ring map $\psi: A\to A'$, we have $|\psi(c)|_{x'}\leq 1$ for any $x'\in X'$ by assumption. Thus we find that $\psi(c)\in (A'_0)^+_{A'}$. On the other hand, $A'_0\cong (A_0)^{Zar}_{(t)}$ by Lemma \ref{20200102} and hence we have 
$$
((A_0)^+_A)^{Zar}_{(t)}\cong (A'_0)^+_{A'}
$$
by Lemma \ref{12290101}. Since the map $(A_0)^+_A\to ((A_0)^+_A)^{Zar}_{(t)}$ becomes an isomorphism after $t$-adic completion, 
one can deduce from Beauville-Laszlo's lemma (Lemma \ref{Beauville-Laszlo}) that the diagram of ring maps 
\[\xymatrix{
(A_0)^+_A\ar[r]\ar[d]&((A_0)^+_A)^{Zar}_{(t)}\ar[d]\\
A\ar[r]_{\psi}&A'
}\]
is cartesian. Thus we obtain $c\in (A_0)^+_A$, as wanted. 
\end{proof}

\section{Appendix B: Almost vanishing of derived limits}
\label{AppendixB}

We give a self-contained account of the proof of the Riemann's extension theorem, as proved by Andr\'e and its consequence on the almost vanishing of the derived limits of a certain tower of perfectoid algebras. This appendix is also meant to help the reader understand Andr\'e's papers \cite{An1} and \cite{An2}. See \cite[Th\'eor\`eme 4.2.2]{An1} and \cite[Proposition 4.4.1]{An1} for the following results, respectively.

\begin{theorem}[Riemann's extension theorem for perfectoid algebras]
\label{Hebbarkeits1}
Fix a perfectoid $K$-algebra $\mathcal{A}$, where $K$ is a perfectoid field with a nonzero element $\varpi \in K^\circ$ admitting all $p$-power roots, and let $g \in \mathcal{A}^\circ$ be an element that admits a compatible system of $p$-power roots $\{g^{\frac{1}{p^m}}\}_{m>0}$, such that $g$ is a $(\varpi)^{\frac{1}{p^\infty}}$-almost regular element of $\mathcal{A}^\circ/(\varpi^r)$ for any fixed $r \in \mathbb{N}[\frac{1}{p}]$.
Then there is an isomorphism:
$$
\varprojlim_{j>0} \mathcal{A} \big\{\frac{\varpi^j}{g}\big\}^\circ \cong g^{-\frac{1}{p^\infty}} \mathcal{A}^\circ.
$$
\end{theorem}

\begin{proof}
Throughout the proof, we fix $r \in \mathbb{N}[\frac{1}{p}]$ and for a given $j \in \mathbb{N}$, we set
$$
A^j_0:=\mathcal{A}^\circ/(\varpi^r)\big[\big(\frac{\varpi^j}{g}\big)^{\frac{1}{p^\infty}}\big].
$$
Then the natural ring map $\eta_j:A^{j+1}_0 \to A^{j}_0$ is defined by $\frac{\varpi^{j+1}}{g} \mapsto \varpi \cdot \frac{\varpi^j}{g}$ and hence $\{A^j_0\}_{j \in \mathbb{N}}$ forms an inverse system.\footnote{Notice that our standing hypothesis on the sequence $\varpi,g$ shows that $A_0^j$ is $(\varpi)^{\frac{1}{p^\infty}}$-almost isomorphic to $(\mathcal{A}^{\circ})^{[j]}$ as defined in \cite[(4.3)]{An1} in view of Lemma \ref{blowupring1}.} Now we claim that the natural map
\begin{equation}
\label{inverselimitiso}
f:\mathcal{A}^\circ/(\varpi^r) \to \varprojlim_j A^j_0
\end{equation}
is a $(g)^{\frac{1}{p^\infty}}$-almost isomorphism. First we show that $f$ is $(g)^{\frac{1}{\infty}}$-almost injective. But we remark that the localization map $\mathcal{A}^\circ/(\varpi^r) \to \mathcal{A}^\circ/(\varpi)^r[\frac{1}{g}]$ factors as $\mathcal{A}^\circ/(\varpi^r) \to A^j_0 \to \mathcal{A}^\circ/(\varpi)^r[\frac{1}{g}]$ which is $(g)^{\frac{1}{p^\infty}}$-almost injective by our hypothesis. So it suffices to treat the cokernel of $f$. Consider the commutative diagram
$$
\begin{CD}
\mathcal{A}^\circ/(\varpi^r) @>f_{j+c}>> A^{j+c}_0 \\
@VVV @VV\eta_j\circ \cdots \circ \eta_{j+c-1}V \\
\mathcal{A}^\circ/(\varpi^r) @>f_j >> A^j_0.\\
\end{CD}
$$
For any fixed $m>0$, we must show that
\begin{equation}
\label{inverselimitiso2}
\eta_j\circ \cdots \circ \eta_{j+c-1}\Big(g^{\frac{1}{p^m}} \cdot \Big(\frac{\varpi^{j+c}}{g}\Big)^{\frac{i}{p^n}}\Big) \in \im(f_j)~\mbox{in}~A^j_0~\mbox{for}~\forall c\ge p^mr~\mbox{and}~i,n>0.
\end{equation}

First assume that $p^n \le ip^m$. Then as $c\ge p^mr$, we have $\varpi^r \mid \varpi^{\frac{c}{p^m}}$. Hence $\varpi^r \mid \varpi^{\frac{ci}{p^n}}$. This shows that
$$
\Big(\frac{\varpi^{j+c}}{g}\Big)^{\frac{i}{p^n}}=\varpi^{\frac{ci}{p^n}} \cdot \Big(\frac{\varpi^{j}}{g}\Big)^{\frac{i}{p^n}}=0,
$$
which gives $(\ref{inverselimitiso2})$ under the assumption $p^n \le ip^m$. Second assume that $p^n > ip^m$. In this case, we get the following
$$
g^{\frac{1}{p^m}} \cdot \Big(\frac{\varpi^{j+c}}{g}\Big)^{\frac{i}{p^n}}=g^{\frac{1}{p^m}-\frac{i}{p^n}} \cdot \varpi^{\frac{ji+ci}{p^n}} \in \im(f_r),
$$
which gives $(\ref{inverselimitiso2})$ under the assumption $p^n > ip^m$. So we proved that for any given $m>0$, if we choose $c$ such that $c \ge p^mr$, then it follows that
\begin{equation}
\label{cokernelmap}
\im\Big(\coker(f_{j+c}) \to \coker(f_j)\Big)~\mbox{is annihilated by}~g^{\frac{1}{p^m}}.
\end{equation}
So combining $(\ref{inverselimitiso})$ and \cite[Lemma 6.4]{Sch12} together, we find that 
$$
\mathcal{A}^\circ/(\varpi^r) \to 
\varprojlim_{j>0} \Big(\mathcal{A} \big\{\frac{\varpi^j}{g}\big\}^\circ/(\varpi^r)\Big) 
$$
is a $(\varpi g)^{\frac{1}{p^\infty}}$-almost isomorphism. After taking the inverse limits, $$
\mathcal{A}^\circ \to 
\varprojlim_{j>0} \mathcal{A} \big\{\frac{\varpi^j}{g}\big\}^\circ
$$
is a $(\varpi g)^{\frac{1}{p^\infty}}$-almost isomorphism. Applying the functor of almost elements $(\varpi g)^{-\frac{1}{p^\infty}}(~)$ on both sides, we obtain the desired isomorphism.
\end{proof}

\begin{corollary}[Almost vanishing of $\varprojlim^1$]
\label{vanishinglimone}
Let the notation and hypotheses be the same as in Theorem \ref{Hebbarkeits1} and fix $r \in \mathbb{N}[\frac{1}{p}]$. Set
$$
A^j_0:=\mathcal{A}^\circ/(\varpi^r)\big[\big(\frac{\varpi^j}{g}\big)^{\frac{1}{p^\infty}}\big]~\mbox{for}~j \in \mathbb{N}.
$$
Then the inverse system $\{A^j_0\}_{j \in \mathbb{N}}$ gives a $(g)^{\frac{1}{p^\infty}}$-almost vanishing
$$
{\varprojlim_{j \in \mathbb{N}}}^1 A^j_0 \approx 0.
$$
\end{corollary}

\begin{proof}
Without loss of generality, we may assume that $g$ is a regular element of $\mathcal{A}^\circ/(\varpi^r)$ for any $r \in \mathbb{N}$. Keep the notation as in Theorem \ref{Hebbarkeits1}. Then we have a commutative diagram:
$$
\begin{CD}
0 @>>> \mathcal{A}^\circ/(\varpi^r) @>f_{j+c}>> A^{j+c}_0 @>>> \coker(f_{j+c}) @>>> 0\\
@.@VVV @VV\eta_j\circ \cdots \circ \eta_{j+c-1}V @VVf_{j+c,j}V \\
0 @>>> \mathcal{A}^\circ/(\varpi^r) @>f_j >> A^j_0 @>>> \coker(f_j) @>>> 0\\
\end{CD}
$$
where each horizontal sequence is exact. 

Fix an integer $m>0$. Let us put $N_j:=\coker(f_j)$. Then choose $c(m) \in \mathbb{N}$ according to $(\ref{cokernelmap})$ such that the image of the map $N_{j+c(m)} \to N_j$ is annihilated by $g^{\frac{1}{p^m}}$. Put $k(m,n):=1+n \cdot c(m)$. Then for any fixed $m>0$, $\{A_0^{k(m,n)}\}_{n \in \mathbb{N}}$ forms a cofinal subsystem of the inverse system $\{A^j_0\}_{j \in \mathbb{N}}$. In other words,
$$
{\varprojlim_{j \in \mathbb{N}}}^1 A^j_0 \cong {\varprojlim_{n \in \mathbb{N}}}^1 A^{k(m,n)}_0.
$$
Moreover, $\{\mathcal{A}^\circ/(\varpi^r)\}_{n \in \mathbb{N}}$ is a constant inverse system. So we have ${\varprojlim_{j \in \mathbb{N}}}^1 \mathcal{A}^\circ/(\varpi^r)=0$ and by applying the derived limits to the exact sequence: $
0 \to \{\mathcal{A}^\circ/(\varpi^r)\}_{n \in \mathbb{N}} \to \{A_0^{k(m,n)}\}_{n \in \mathbb{N}} \to \{N_{k(m,n)}\}_{n \in \mathbb{N}} \to 0$, we get an isomorphism:
$$
{\varprojlim_{j \in \mathbb{N}}}^1 A^j_0 \cong {\varprojlim_{n \in \mathbb{N}}}^1N_{k(m,n)}.
$$
In particular, the right-hand side does not depend on the choice of $m \in \mathbb{N}$. Now we claim that
\begin{equation}
\label{derivedinverse1}
g^{\frac{1}{p^m}} \cdot \big({\varprojlim_{n \in \mathbb{N}}}^1N_{k(m,n)}\big)=0.
\end{equation} 
To prove this, we may replace the system $\{N_j\}_{j \in \mathbb{N}}$ with $\{N_{k(m,n)}\}_{n \in \mathbb{N}}$ to simplify the notation. 

Choose any element $(\beta_i)_{i \in \mathbb{N}} \in \prod_{j \in \mathbb{N}}N_{j}$ and set $\gamma_i:=g^{\frac{1}{p^m}} \beta_i$. By using $(\ref{cokernelmap})$, the infinite sum
$$
\alpha_k:=\sum_{j=k}^\infty f_{j,k}(\gamma_j)
$$
makes sense, where $f_{j,k}:N_j \to N_k$ is the map given above. Then $(\alpha_k)_{k \in \mathbb{N}}$ maps to $(\gamma_k)_{k \in \mathbb{N}}$ under the mapping:
$$
\Delta:\prod_{j \in \mathbb{N}}N_{j} \to \prod_{j \in \mathbb{N}}N_{j}
$$
defined by the formula $\Delta((x_i)_{i \in \mathbb{N}}):=\big(x_i-f_{i+1,i}(x_{i+1})\big)_{i \in \mathbb{N}}$ for $(x_i)_{i \in \mathbb{N}} \in \prod_{j \in \mathbb{N}} N_j$, which gives $(\ref{derivedinverse1})$. See \cite[Proposition 3.5.7]{Wei94} for the Mittag-Leffler condition and its relation to the derived inverse limits and \cite[Lemma 2.4.2]{GR03} for its almost variants. Since $m$ was arbitrarily chosen, we conclude that ${\varprojlim_{j \in \mathbb{N}}}^1 A^j_0$ is $(g)^{\frac{1}{p^\infty}}$-almost zero, as desired.
\end{proof}

\section{Appendix C: Almost Galois extensions}
\label{AlmostG-GaloisExt}

We make use of Galois theory of commutative rings in making reductions in steps of proofs of some results in the present paper. Let $A \to B$ be a ring extension and let $G$ be a finite group acting on $B$ as ring automorphisms.

\begin{definition}
We say that $B$ is a \textit{G-Galois extension} of $A$, if 
$A=B^G$ and the natural ring map
$$
B \otimes_A B \to \prod_G B;~b \otimes b' \mapsto (\gamma(b)b')_{\gamma \in G}
$$
is an isomorphism. 
\end{definition}

Some fundamental results about Galois extensions are found in \cite{An1} or \cite{Fo17}. A source of the definition of \textit{almost G-Galois extension} is \cite{An1}. Here we cite some related results for the sake of readers.

\begin{definition}
\label{AlmostG-GaloisExt2}
Let $(A,I)$ be a basic setup and let $B \to C$ be an $A$-algebra map with $G$ acting on $C$ as ring automorphisms. Then we say that $B \to C$ is an \textit{I-almost G-Galois extension} if the natural map $B \to C$ induces an $I$-almost isomorphism $B \xrightarrow{\approx} C^G$ and
$$
C \otimes_B C \to \prod_G C;~c \otimes c' \mapsto (\gamma(c)c')_{\gamma \in G}
$$
is an $I$-almost isomorphism.
\end{definition}

\begin{proposition}
\label{AlmostG-GaloisExt3}
Let $(A,I)$ be a basic setup and let $B \to C$ be an $A$-algebra map with a finite group $G$ acting on $C$ such that $B \to C$ factors as $B \to C^G \to C$. Then the following assertions hold.
\begin{enumerate}
\item
If $B \to C$ is $I$-almost $G$-Galois, then it is $I$-almost finite \'etale of constant rank $|G|$.

\item
Assume that $B \to D$ is an $A$-algebra map. Let $G$ act on the base change $D \otimes_B C$ through the second factor. If $B \to C$ is $I$-almost $G$-Galois, then so is $D \to D \otimes_B C$. Conversely, if $D$ is faithfully flat over $B$ and $D \to D \otimes_B C$ is $I$-almost $G$-Galois, then so is $B \to C$.

\item
Assume that $B \to C$ is $I$-almost $G$-Galois and assume that $B \to D \to C$ is a factorization of rings such that $D \to C$ is $I$-almost $H$-Galois for a subgroup $H \subset G$. Then the canonical map
$$
D \otimes_B C \to \prod_{G/H} C
$$
is an $I$-almost isomorphism and $B \to C$ is $I$-almost finite \'etale of constant rank $|G/H|$.
\end{enumerate}
\end{proposition}

\begin{proof}
See \cite[Proposition 1.9.1]{An1} for the proof.
\end{proof}

In the next lemma, $S_n$ will denote the group of permutations of $n$ elements.

\begin{lemma}
\label{AlmostG-GaloisExt4}
The following assertions hold.
\begin{enumerate}
\item
Assume that $B \to C$ is \'etale of constant rank $r$. Then there is an $S_r$-Galois extension $B \to D$ which factors as $B \to C \to D$ such that $C \to D$ is an $S_{r-1}$-Galois extension.

\item
Let $C$ be a ring on which a finite group $G$ acts as ring automorphisms. Set $B:=C^G$. Suppose that there is a factorization $B \to D \to C$ such that $G(D)=D$ and $B \to D$ is a $G$-Galois extension. Then $D \to C$ is bijective.
\end{enumerate}
\end{lemma}

\begin{proof}
See \cite[Lemme 1.9.2]{An1} and \cite[Lemme 1.9.3]{An1} for the proof.
\end{proof}

\begin{acknowledgement}
The authors are grateful to Professor K. Fujiwara for encouragement and comments on this paper. 
They also thank S. Ishiro for helpful discussions. Finally, they thank the referee for a meticulous reading of the paper. The first author was partially supported by JSPS Grant-in-Aid for Early-Career Scientists 23K12952. The second author was partially supported by JSPS Grant-in-Aid for Scientific Research(C) 23K03077.
\end{acknowledgement}

\end{document}